\newcommand{\median}{\mathop{\rm median}\limits}
\newcommand{\vol}{\mathrm{vol}}
\newcommand{\rd}{\, \mathrm{d}}
\newcommand{\bsg}{\boldsymbol{g}}
\newcommand{\bsh}{\boldsymbol{h}}
\newcommand{\bsk}{\boldsymbol{k}}
\newcommand{\bszero}{\boldsymbol{0}}
\newcommand{\bsone}{\boldsymbol{1}}
\newcommand{\bsalpha}{\boldsymbol{\alpha}}
\newcommand{\bsgamma}{\boldsymbol{\gamma}}
\newcommand{\bstau}{\boldsymbol{\tau}}
\newcommand{\bsa}{\boldsymbol{a}}
\newcommand{\bsc}{\boldsymbol{c}}
\newcommand{\bsu}{\boldsymbol{u}}
\newcommand{\bsx}{\boldsymbol{x}}
\newcommand{\bsy}{\boldsymbol{y}}
\newcommand{\wor}{\mathrm{wor}}
\newcommand{\Sob}{\mathrm{sob}}
\newcommand{\EE}{\mathbb{E}}
\newcommand{\FF}{\mathbb{F}}
\newcommand{\GG}{\mathbb{G}}
\newcommand{\NN}{\mathbb{N}}
\newcommand{\PP}{\mathbb{P}}
\newcommand{\RR}{\mathbb{R}}
\newcommand{\Scal}{\mathcal{S}}
\DeclareMathOperator{\tr}{tr}
\newcommand{\prob}[1]{\mathrm{Pr}\left[#1\right]}
\newcommand{\cF}{\mathcal{F}}
\newcommand{\cL}{\mathcal{L}}
\newcommand{\cX}{\mathcal{X}}
\newcommand{\Fb}{\mathbb{F}_b}
\title{A universal median quasi-Monte Carlo integration\thanks{Submitted to the editors DATE.
\funding{This work is supported by JSPS KAKENHI Grant Number 20K03744 (T.G.), 23K03210 (T.G.), 20K14326 (K.S.) and 23K03033 (M.M.).}}}
\author{Takashi Goda\thanks{School of Engineering, University of Tokyo, 7-3-1 Hongo, Bunkyo-ku, Tokyo 113-8656, Japan (\email{goda@frcer.t.u-tokyo.ac.jp}).}
\and Kosuke Suzuki\thanks{Graduate School of Advanced Science and Engineering, Hiroshima University, 1-3-1 Kagamiyama, Higashi-Hiroshima, Hiroshima 739-8526, Japan. Current address: Faculty of Pharmacy, Daiichi University of Pharmacy, 22-1 Tamagawa-machi, Minami-ku, Fukuoka 815-8511, Japan (\email{k-suzuki@daiichi-cps.ac.jp}).} \and Makoto Matsumoto\thanks{Graduate School of Advanced Science and Engineering, Hiroshima University, 1-3-1 Kagamiyama, Higashi-Hiroshima, Hiroshima 739-8526, Japan (\email{m-mat@math.sci.hiroshima-u.ac.jp}).}}
\begin{document}

\maketitle

\begin{abstract}
We study quasi-Monte Carlo (QMC) integration over the multi-dimensional unit cube in several weighted function spaces with different smoothness classes. We consider approximating the integral of a function by the median of several integral estimates under independent and random choices of the underlying QMC point sets (either linearly scrambled digital nets or infinite-precision polynomial lattice point sets). Even though our approach does not require any information on the smoothness and weights of a target function space as an input, we can prove a probabilistic upper bound on the worst-case error for the respective weighted function space, where the failure probability converges to 0 exponentially fast as the number of estimates increases. Our obtained rates of convergence are nearly optimal for function spaces with finite smoothness, and we can attain a dimension-independent super-polynomial convergence for a class of infinitely differentiable functions. This implies that our median-based QMC rule is \emph{universal} in the sense that it does not need to be adjusted to the smoothness and the weights of the function spaces and yet exhibits the nearly optimal rate of convergence. Numerical experiments support our theoretical results.
\end{abstract}

\begin{keywords}
numerical integration, quasi-Monte Carlo, median, scrambled digital net, polynomial lattice point set, weighted function space, universality
\end{keywords}

\begin{MSCcodes}
65D30, 65D32, 41A55, 42C10
\end{MSCcodes}

\section{Introduction}
We study numerical integration for functions defined over the $s$-dimensional unit cube. For a Lebesgue integrable function $f: [0,1]^s\to \RR$, we denote its integral by
\[ I_s(f):=\int_{[0,1]^s}f(\bsx)\rd \bsx.\]
In this paper, we consider integrands from Banach spaces that have continuous representatives.
Quasi-Monte Carlo (QMC) methods approximate $I_s(f)$ by the equally-weighted mean of function evaluations over a carefully designed point set $P$:
\[ Q_{P}(f):=\frac{1}{N}\sum_{\bsx\in P}f(\bsx),\]
where $P$ is considered to be a multiset in the sense of combinatorics, i.e., a set that allows for repeated elements and takes into account the number of times each element appears, and $N:=|P|$ denotes its cardinality as a multiset. 

The history of QMC methods dates back to the work of Weyl \cite{We16} and has witnessed plenty of works in connection to discrepancy theory. One of the key findings is the Koksma-Hlawka inequality, which states that the integration error $|Q_{P}(f)-I_s(f)|$ is bounded above by the product of the total variation of $f$ in the sense of Hardy and Krause and the star discrepancy of $P$ \cite{Ni92book,DKS13}. This error bound motivates us to study how to design a good point set such that the star discrepancy is small. In fact, there are many explicit constructions of low-discrepancy point sets and sequences provided in the literature, including those by Halton \cite{Ha60}, Sobol' \cite{So67}, Faure \cite{Fa82}, Niederreiter \cite{Ni88}, Tezuka \cite{Te93} and Niederreiter and Xing \cite{NX01bok}. If $f$ has a bounded total variation, the error decays at the rate of either $(\log N)^{s-1}/N$ or $(\log N)^s/N$, depending on whether $P$ is a low-discrepancy point set or the first $N$ points of a low-discrepancy sequence, respectively.

More recently, one of the main approaches in QMC research is to consider a Banach space $B$ with the norm $\|\cdot\|_B$, instead of looking only at functions with bounded variation, and study how to design a point set such that the worst-case error
\[ e^{\wor}(Q_P; B) := \sup_{\substack{f\in B\\ \|f\|_B\leq 1}}\left| Q_{P}(f)-I_{s}(f)\right|\]
becomes small. As we have $|Q_{P}(f)-I_s(f)|\leq \|f\|_B\, e^{\wor}(Q_P; B)$ for any function $f\in B$, a single point set $P$ with a small worst-case error bound should work well for all functions in $B$. Here it is important to note that, as long as we consider the worst-case for inputs $f$ from the unit ball of $B$, which is a convex and symmetric set, optimal algorithms are \emph{non-adaptive} and \emph{linear} in the deterministic worst-case setting, as stated in \cite[Theorem~4.7]{NW08book}. QMC methods are a special case of such non-adaptive, linear algorithms as they simply take the average of function evaluations. If we consider the worst-case over a non-convex or asymmetric input set $F\subset B$, though, then adaption may sometimes help \cite{Nov96}. 

As another key ingredient, Sloan and Wo\'{z}niakowski used \emph{weighted function spaces} to study how the worst-case error depends on the dimension $s$ more precisely \cite{SW98}; we also refer to earlier works by Hickernell \cite{Hic96,Hic98}, who had already introduced weights in the definition of norms for function spaces. There the symbols $\bsgamma=(\gamma_u)_{u\subseteq \{1,\ldots,s\}}$ with $\gamma_\emptyset =1$ control the relative amount in the norm of the function to which each individual variable $x_j$ or each subset of the variables $\bsx_u=(x_j)_{j\in u}$ contributes. Nowadays, these two ingredients have come together and there is a large body of work on \emph{construction of good QMC point sets in weighted function spaces.} Typical examples include rank-1 lattice point sets in weighted Korobov spaces consisting only of smooth periodic functions \cite{Ku03,NC06,DGS22} and (interlaced or extrapolated) polynomial lattice point sets in weighted Sobolev spaces with dominating mixed smoothness $\alpha\geq 2$ containing non-periodic smooth functions \cite{DKLNS14,G15,DGY19}. 

However, there are some shortcomings in this research direction. Firstly, to search for good point sets, the worst-case error or its upper bound must be computable in a reasonable time, which forces us to model the set of $2^s-1$ weights $\gamma_u$ by a smaller number of parameters. Secondly, although the information both on the smoothness and the weights is required as an input to define a weighted function space for construction, it is generally quite a hard problem to find appropriate smoothness and weights for a given application, except for some special cases such as partial differential equations with random coefficients \cite{KSS12,DKLNS14,KN16}. Last, but not least, if good point sets are constructed in a weighted function space to which a target integrand does not belong, we may not have any theoretical guarantee that the error converges at the desired rate. Although these issues have been partly addressed in several works \cite{Di12,DG21,EKNO21}, none of them could completely eliminate the need to specify the weights.

A recent work by L'Ecuyer and the first named author of this paper has introduced a novel method that does not require any knowledge of the smoothness and weights at all but still achieves a desired convergence rate of the worst-case error with a high probability \cite{GL22}. Instead of trying to find good point sets, their approach goes as follows: select rank-1 lattice point sets $P_1, \ldots, P_r$ independently and randomly for an odd integer $r$, compute the integral estimates $Q_{P_1}(f),\ldots,Q_{P_r}(f)$ and take the median of them. Even with such a construction-free rule, a probabilistic worst-case error bound could be established for weighted Korobov spaces with any smoothness and general weights. Moreover, a similar result has been shown for high order polynomial lattice point sets in weighted Sobolev spaces with smoothness $\alpha\geq 2$, getting rid of the necessity that the integrand should be periodic. Although each of $Q_{P_1},\ldots,Q_{P_r}$ is a linear non-adaptive algorithm, taking their median results in a non-linear algorithm. This non-linearity distinguishes it from the theoretically optimal deterministic integration methods. We justify the use of such a non-linear algorithm by observing that we are now working with randomized algorithms, where achieving a high confidence level is desirable because it is usually hard to check the actual worst-case error associated with every realization of randomized algorithms. Additionally, by utilizing the median trick, it becomes possible for the probability of ``failure'' in meeting a probabilistic error bound to decay exponentially fast toward $0$ as the number of trials $r$ increases, see \cite[Proposition~1]{KR19} and \cite[Theorem~2.5]{GL22} among some others. For the worst-case error analysis, Proposition~\ref{prop:meta} in this paper establishes a general median principle. Here we point out that taking the median of several randomized QMC estimates has been also studied quite recently in \cite{GLM22,HR22,PO21,PO22}.

In this paper, we push forward this novel research direction by extending some of the results shown in \cite{GL22}. Our primary aim is to establish a \emph{universality} of QMC-based integration methods in the sense that they do not need to be adjusted to the smoothness and the weights of non-periodic function spaces and yet exhibit the nearly optimal rate of convergence with high probability. To do so, we consider two special classes of QMC point sets, namely, linearly scrambled digital nets and (infinite-precision) polynomial lattice point sets, and cover three weighted function spaces with different smoothness classes simultaneously. Other than one function space which is exactly the same as the one studied in \cite{GL22} for high order polynomial lattice point sets, we consider a weighted Sobolev space of first order at one end, which contains functions with absolutely integrable mixed first derivatives, and a weighted space of infinitely many times differentiable functions at the other end. As related works, we refer to \cite{KN17,UU16,U17} which study the universality of (randomized) Frolov cubature \cite{Fr76} for unweighted function spaces with finite smoothness, and also to \cite{PO21} which shows a universality of one-dimensional scrambled nets in terms of smoothness of functions, covering both the finite and infinite smoothness classes. A part of the results in the latter paper has been extended to multi-dimensional scrambled nets quite recently in \cite{PO22}. However, a universality in terms of the weights has been largely missing in the literature.

The rest of this paper is organized as follows. In the next section, we introduce linearly scrambled digital nets and polynomial lattice point sets and give some new results on their dual properties. Then, in section~\ref{sec:theory}, we show a probabilistic worst-case error bound for each of three weighted function spaces and discuss its dependence on the dimension $s$. In section~\ref{sec:numerics}, we perform some numerical experiments to support our theoretical claims.

\paragraph{Notation}
Throughout this paper, we denote the set of positive integers by $\NN$ and write $\NN_0=\NN\cup \{0\}$. For $s\in \NN$, we use the shorthand $1{:}s$ to denote the set $\{1,\ldots,s\}$. We always assume that $b$ is a prime. We denote by $\Fb = \{0,1, \dots,b-1\}$ the field of $b$ elements, i.e., the simplest construction of the prime field of order $b$ by taking the integers modulo $b$, and by $\Fb^{n \times m}$ the set of $n \times m$ matrices over $\Fb$ for $m,n\in \NN$. For $k\in \NN_0$ having the $b$-adic expansion $k=\kappa_0+\kappa_1 b+\cdots$, where all but a finite number of $\kappa_i$’s are 0, we write $k(x)=\kappa_0+\kappa_1 x+\cdots\in \FF_b[x]$ and $\vec{k}=(\kappa_0,\kappa_1,\ldots)^{\top}\in \FF_b^{\NN}$.

\section{QMC point sets}
In this paper we study two special classes of QMC point sets, linear scrambled digital $(t,m,s)$-nets and (infinite-precision) polynomial lattice point sets. Both of them can be regarded as a class of digital nets, respectively, which we define as follows. 
\begin{definition}[digital net]
Let $m \in \NN$, $n \in \NN \cup \{\infty\}$ and $C_1, \dots, C_s \in \Fb^{n \times m}$. For each integer $0 \le k < b^m$, we denote its $b$-adic expansion by $k = \sum_{i=0}^{m-1} \kappa_i b^{i}$ with $\kappa_i \in \Fb$, and obtain the $k$-th point $\bsx_k\in [0,1]^s$ by
\begin{align}\label{eq:pt-generation}
\bsx_k := (\psi_n(\bsy_{k,1}), \dots, \psi_n(\bsy_{k,s})),
\end{align}
where we write $\bsy_{k,j} := C_j (\kappa_0, \dots, \kappa_{m-1})^\top \in \Fb^n$ for each $j$, and the map $\psi_n \colon \Fb^n \to [0,1]$ is defined by
\[ \psi_n((y_1, \dots, y_{n})^\top) := \sum_{i=1}^n \frac{y_{i}}{b^i}. \]
The $b^m$-element point set $\{\bsx_0,\ldots,\bsx_{b^m-1}\}$ constructed this way is called a digital net over $\Fb$ with generating matrices $C_1,\ldots,C_s$, which we denote by $P(C_1,\ldots,C_s)$.
\end{definition}

In this section, we first introduce the definitions of linear scrambled digital $(t,m,s)$-nets and polynomial lattice point sets, respectively, and then give some new results on their dual properties.
\subsection{Digital $(t,m,s)$-nets and linear scrambling}

The $t$-value is the central measure of how uniformly distributed a digital net is, which comes from the concept of $(t,m,s)$-nets defined in terms of the elementary intervals as follows.
\begin{definition}[elementary interval and $(t,m,s)$-net]\label{def:tms-net}
A $b$-adic $s$-dimensional elementary interval denotes an axis-parallel, right half-open box of the form
\[ \prod_{j=1}^{s}\left[ \frac{c_j}{b^{a_j}},\frac{c_j+1}{b^{a_j}}\right),\quad \text{with $a_j, c_j\in \NN_0$ and $0\leq c_j<b^{a_j}$.}\]
For $m\in \NN_0$, a $b^m$-element point set $P\subset [0,1)^s$ is called a $(t,m,s)$-net in base $b$ if every elementary interval of volume $b^{-m+t}$ contains exactly $b^t$ points of $P$.
\end{definition}

It is obvious from the definition that a smaller $t$-value ensures a finer uniformity of points and $t=0$ is the best possible. Now it is natural to call a point set $P$ a \emph{digital $(t,m,s)$-net in base $b$} if $P$ is a digital net over $\FF_b$ as well as a $(t,m,s)$-net in base $b$. Since any $b^m$-element point set or digital net over $\FF_b$ is an $(m,m,s)$-net in base $b$, the concept of $(t,m,s)$-net is only useful when $t<m$ \cite[Section~2.5]{DKS13}.

The least $t$-value of a given digital net can be determined in the following way. We refer to \cite[Theorem~4.52]{DP10book} for the proof.
\begin{lemma}\label{lem:t-value}
Let $m,n \in \NN$ and $C_1, \dots, C_s \in \Fb^{n \times m}$. The point set $P(C_1,\ldots,C_s)$ is a digital $(t,m,s)$-net in base $b$ with $t$ being the smallest non-negative integer such that, for any $a_1,\ldots,a_s\in \NN_0$ with $a_1+\cdots+a_s=m-t$,\\[5pt]
\indent the first $a_1$ rows of $C_1$,\\
\indent the first $a_2$ rows of $C_2$,\\
\indent $\vdots$\\
\indent and the first $a_s$ rows of $C_s$\\[5pt]
are linearly independent over $\FF_b$.
\end{lemma}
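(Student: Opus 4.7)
The plan is to translate the geometric condition that every elementary interval of volume $b^{-m+t}$ contains exactly $b^t$ points into a rank condition on submatrices of $C_1,\ldots,C_s$, and then invoke rank-nullity.

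First I would fix a non-negative integer $t$ together with a tuple $(a_1,\ldots,a_s)$ with $a_1+\cdots+a_s=m-t$, and a single elementary interval
\[ E = \prod_{j=1}^{s}\left[ \frac{c_j}{b^{a_j}}, \frac{c_j+1}{b^{a_j}}\right) \]
with these parameters. The key observation is that, by definition of the map $\psi_n$, the condition $\psi_n(\bsy_{k,j})\in [c_j/b^{a_j},(c_j+1)/b^{a_j})$ is exactly the requirement that the first $a_j$ coordinates of $\bsy_{k,j}=C_j(\kappa_0,\ldots,\kappa_{m-1})^\top$ coincide with the $b$-adic digits of $c_j/b^{a_j}$. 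Collecting these conditions for $j=1,\ldots,s$, membership $\bsx_k\in E$ becomes the linear system
\[ A_{a_1,\ldots,a_s}\,\bsk = \bsc \quad \text{in } \FF_b^{m-t}, \]
where $A_{a_1,\ldots,a_s}\in\FF_b^{(m-t)\times m}$ is the matrix obtained by stacking the first $a_j$ rows of each $C_j$, $\bsk=(\kappa_0,\ldots,\kappa_{m-1})^\top$, and $\bsc$ encodes the digits of the $c_j$'s. As the tuple $(c_1,\ldots,c_s)$ runs over all admissible choices, the vector $\bsc$ ranges exactly once over $\FF_b^{m-t}$.

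Next I would count: the number of points of $P(C_1,\ldots,C_s)$ lying in $E$ equals the number of $\bsk\in\FF_b^m$ solving $A_{a_1,\ldots,a_s}\bsk=\bsc$. By elementary linear algebra, every fiber of the map $\bsk\mapsto A_{a_1,\ldots,a_s}\bsk$ has size $b^t$ for every right-hand side if and only if $A_{a_1,\ldots,a_s}$ has full row rank $m-t$; if the row rank drops to some $r<m-t$, then the fibers either are empty or have size $b^{m-r}>b^t$, so the $(t,m,s)$-net property fails for that particular $(a_1,\ldots,a_s)$. Full row rank of $A_{a_1,\ldots,a_s}$ is, by definition, exactly the linear independence over $\FF_b$ of the $(m-t)$ row vectors described in the lemma.

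Finally, varying the tuple $(a_1,\ldots,a_s)$ over all compositions of $m-t$ shows that $P(C_1,\ldots,C_s)$ is a $(t,m,s)$-net if and only if every such $A_{a_1,\ldots,a_s}$ has linearly independent rows. Taking the smallest $t\geq 0$ for which this holds, which is consistent because for $t=m$ the condition is vacuous, yields the $t$-value described in the statement. I do not anticipate a hard obstacle here; the one place to be careful is the bookkeeping between the $b$-adic digit expansion of $c_j/b^{a_j}$ and the precise rows of $C_j$ involved, together with the fact that, because the map $\psi_n$ has possibly infinite precision, one must check that the condition $\psi_n(\bsy_{k,j})\in [c_j/b^{a_j},(c_j+1)/b^{a_j})$ is determined purely by the first $a_j$ digits of $\bsy_{k,j}$ and does not suffer from any $b$-adic boundary ambiguity (which it does not, since $\psi_n(\bsy_{k,j})<1$ always).
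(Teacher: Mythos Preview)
Your argument is correct and is precisely the standard proof of this characterization. The paper itself does not give a proof of this lemma; it simply cites \cite[Theorem~4.52]{DP10book}, whose argument is essentially what you wrote: reduce the equidistribution condition on a fixed elementary interval to a linear system over $\FF_b$ and apply rank--nullity.

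One minor remark on your closing caveat: the lemma as stated assumes $n\in\NN$ is finite, so $\psi_n(\bsy_{k,j})$ is a terminating $b$-adic expansion and the ``boundary ambiguity'' you worry about cannot occur at all; your digit-matching equivalence is exact. The paper explicitly flags in the remark following the lemma that for $n=\infty$ the linear-algebraic $t$-value may differ from the geometric one, which is exactly the subtlety you anticipated, but it lies outside the scope of the stated lemma.
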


\begin{remark}
If $n=\infty$, the $t$-value determined by Lemma~\ref{lem:t-value} does not necessarily coincide with the one in the sense of Definition~\ref{def:tms-net}. This problem occurs because a digital net with generating matrices having infinitely many rows allows for two different representations of $b$-adic rationals, such as
\[ x=\frac{1}{b}\quad \text{and}\quad x=\frac{b-1}{b^2}+\frac{b-1}{b^3}+\cdots. \]
In such cases, we always consider the $t$-value in the sense of Lemma~\ref{lem:t-value}.
\end{remark}

Explicit constructions of digital $(t,m,s)$-nets with small $t$-value have been provided by Sobol' \cite{So67}, Niederreiter \cite{Ni88} and Tezuka \cite{Te93} as well as many others, see \cite[Chapter~4]{Ni92book} and \cite[Chapter~8]{DP10book}. As a key ingredient for the subsequent analysis, let us mention the result from \cite{Wa03} on the $t$-value of the Niederreiter sequence in base $b$: for any $m,s\in \NN$ and any non-empty subset $u\subseteq 1{:}s$, the projection of the first $b^m$ points from the $s$-dimensional Niederreiter sequence onto the $|u|$-dimensional unit cube $[0,1)^{|u|}$ is a digital $(t_u,m,|u|)$-net in base $b$ with
\begin{align}\label{eq:tvalue_Niederreiter}
    t_u\leq \sum_{j\in u}\left( \log_b j+\log_b\log_b(j+b)+1\right).
\end{align}
As explained in \cite[Example~2.20]{DKS13}, the Niederreiter sequence replaces the sequence of primitive polynomials over $\FF_2$, used in the Sobol' sequence construction, with a sequence of irreducible polynomials over $\FF_b$ with prime power $b$. As primitive polynomials are always irreducible, this replacement leads to a smaller bound on the $t$-value.

We now introduce the definition of linearly scrambled digital nets (without shift).
\begin{definition}[linearly scrambled digital net]\label{def:linear_scramble}
For $w,n \in \NN\cup \{\infty\}$, we define a set of non-singular lower triangular matrices over $\FF_b$, denoted by $\cL_{w,n}$, as
\[
\cL_{w,n} := \left\{L = (\ell_{i,j}) \in \Fb^{w \times n} \;\middle|\; \text{$\ell_{i,j}=0$ if $i<j$ and $\ell_{i,j}\neq 0$ if $i=j$} \right\}.
\]
For a digital net $P(C_1, \dots, C_s)$ with $C_1,\ldots,C_s\in \FF_b^{n\times m}$ and $L_1,\ldots,L_s\in \cL_{w,n}$, the corresponding linearly scrambled digital net (with precision $w$) is a digital net $P(L_1C_1,\ldots,L_sC_s)$. 
If each $L_j$ is independently and randomly chosen from $\cL_{w,n}$, we call $P(L_1C_1,\ldots,L_sC_s)$ a randomly linearly scrambled digital net over $\FF_b$. Here a random sampling from $\cL_{w,n}$ is done by choosing each element $\ell_{i,j}$ independently and randomly under the above conditions.
\end{definition}

\begin{remark}
To be more precise on random sampling from $\cL_{w,n}$ in Definition~\ref{def:linear_scramble},
let us denote the full index set by $U_{w,n}=\{(i,j)\mid 1\leq i\leq w, 1\leq j\leq n, i\leq j\}$ for given $w,n \in \NN\cup \{\infty\}$, and consider the Cartesian product 
\[
\cX_{w,n} :=
\prod_{\substack{(i,j)\in U_{w,n}\\ i=j}} (\FF_b\setminus \{0\}) \times \prod_{\substack{(i,j)\in U_{w,n}\\ i<j}} \FF_b,
\]
which is canonically isomorphic to $\cL_{w,n}.$
The Kolmogorov extension theorem ensures that,
even if either $w$ or $n$ is infinite, there exists a measure $\nu$ on $\cX_{w,n}$ induced by the uniform probability measures on $\FF_b\setminus \{0\}$ and $\FF_b$. Specifically, for every cylinder set
\[ E=\prod_{\substack{(i,j)\in V\\ i=j}}F_{ii}\times\prod_{\substack{(i,j)\in V\\i<j}}F_{ij}\times \prod_{\substack{(i,j)\in U_{w,n}\setminus V\\ i=j}} (\FF_b\setminus \{0\})\times \prod_{\substack{(i,j)\in U_{w,n}\setminus V\\ i<j}} \FF_b, \]
with a finite index subset $V\subset U_{w,n}$, $F_{ii}\subseteq \FF_b\setminus \{0\}$ and $F_{ij}\subseteq \FF_b$, it holds that 
\[ \nu(E)= \prod_{\substack{(i,j)\in V\\ i=j}}\frac{|F_{ii}|}{b-1}\times \prod_{\substack{(i,j)\in V\\i<j}}\frac{|F_{ij}|}{b}.\]
Subsequently, when computing the probability of an event or the expectation of functions with respect to $L_1,\ldots,L_s\in \cL_{w,n}$, we consider the $s$-fold product of this cylinder set measure.

In what follows, we focus on the case where $w=\infty$, which facilitates our theoretical analysis. In practice, representing infinite digit expansions on computers is quite hard, but, even with finite expansions, the accompanying truncation error diminishes as $w$ increases, as shown in \cite[Lemma~2.1]{MSM14} and \cite[Lemma~1 \& Corollary~3]{PO22}.
\end{remark}

\begin{remark}\label{rem:preserve_tvalue}
A deterministic linear scrambling as in Definition~\ref{def:linear_scramble} has been originally introduced by Tezuka \cite{Te94} to generalize Faure sequences. A random linear scrambling together with a random digital shift has been studied by Matou\v{s}ek \cite{Ma98} as a computationally more efficient alternative to the fully nested scrambling by Owen \cite{Ow95,Ow97a,Ow97b}. It is important that, no matter how we choose $L_1,\ldots,L_s\in \cL_{w,n}$ with $w\geq m$, the $t$-value of a linearly scrambled net $P(L_1C_1,\ldots,L_sC_s)$ remains the same as that of $P(C_1,\ldots,C_s)$, i.e., a linear scrambling preserves the $t$-value. In passing, we point out that further derandomization of random linear scrambling has been studied in \cite{Ow03,TF03}. For example, $i$-binomial scrambling developed in \cite{TF03} restricts the matrices in $\cL_{w,n}$ to those of Toeplitz type. Although we will not go into the details, the results of this paper also apply to such cases.
\end{remark}

\subsection{Polynomial lattice point sets}
As the other class of point sets we study in this paper, here we introduce polynomial lattice point sets.

\begin{definition}[polynomial lattice point set]\label{def:plr}
For $m,s\in \NN$ and $w\in \NN\cup \{\infty\}$ with $w\geq m$, let $p\in \FF_b[x]$ with $\deg(p)=m$ and $\bsg=(g_1,\ldots,g_s)\in (\FF_b[x])^s$ with $\deg(g_j)<m$. The polynomial lattice point set (of precision $w$) with modulus $p$ and generating vector $\bsg$ is a $b^m$-element point set
\[ P(p,\bsg,w)=\left\{ \bsx_k:=\nu_w\left( \frac{k(x)\bsg(x)}{p(x)}\right)\;\middle|\; 0\leq k<b^m\right\},\]
where $\nu_w: \FF_b((x^{-1}))\to [0,1]$ is applied component-wise to a vector and is given by
\[ \nu_w\left( \sum_{i=c}^{\infty}\frac{a_i}{x^i}\right)=\sum_{i=\max(1,c)}^{w}\frac{a_i}{b^i}.\]
When $w=\infty$, we simply write $P(p,\bsg)$ instead of $P(p,\bsg,\infty)$.
\end{definition}

Polynomial lattice point sets have been originally introduced by Niederreiter \cite{Ni92} and their relation to linear shift-register sequences has been observed in \cite{LL99}. We also refer to \cite{LL03,DP07,Pi12}. It is well-known that polynomial lattice point sets can be regarded as a special class of digital nets over $\FF_b$ as follows. We refer to \cite[Chapter~10]{DP10book} for the proof.

\begin{lemma}
For $m,s\in \NN$ and $w\in \NN\cup \{\infty\}$ with $w\geq m$, let $p\in \FF_b[x]$ with $\deg(p)=m$ and $\bsg=(g_1,\ldots,g_s)\in (\FF_b[x])^s$ with $\deg(g_j)<m$. The polynomial lattice point set $P(p,\bsg,w)$ is a digital net over $\FF_b$ with generating matrices $C_1,\ldots,C_s\in \FF_b^{w\times m}$, where each matrix $C_j=(c^{(j)}_{i,r})$ is of Hankel type:
\[ c^{(j)}_{i,r}=u^{(j)}_{i+r-1},\]
and $u^{(j)}_1,u^{(j)}_2,\ldots$ are the coefficients appearing in the infinite (periodic) Laurent series of the quotient
\[ \frac{g_j(x)}{p(x)}=\sum_{i=1}^{\infty}\frac{u^{(j)}_i}{x^i}\in \FF_b((x^{-1})). \]
\end{lemma}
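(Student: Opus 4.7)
The plan is to verify the claim by a direct computation matching the two point-generation formulas, so the work is essentially a change of indices plus a careful bookkeeping of the Laurent-series expansion.

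First, I would fix $k \in \{0,\dots,b^m-1\}$ with $b$-adic expansion $k = \sum_{r=0}^{m-1}\kappa_r b^r$, so that $k(x) = \sum_{r=0}^{m-1}\kappa_r x^r \in \FF_b[x]$. The key computation is to expand the rational function
\[
\frac{k(x)\, g_j(x)}{p(x)} \;=\; \sum_{r=0}^{m-1}\kappa_r x^r \cdot \sum_{i=1}^{\infty} \frac{u^{(j)}_i}{x^i} \;\in\; \FF_b((x^{-1})),
\]
and collect the coefficient of each $x^{-\ell}$. Since $\nu_w$ discards the polynomial part (the sum starts at $i=\max(1,c)$), only the strictly negative powers of $x$ contribute, and for $1 \le \ell \le w$ the coefficient of $x^{-\ell}$ equals $\sum_{r=0}^{m-1}\kappa_r u^{(j)}_{r+\ell}$.

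Next, I would plug this into the definition of $\nu_w$ to obtain
\[
\nu_w\!\left(\frac{k(x)\, g_j(x)}{p(x)}\right) = \sum_{\ell=1}^{w}\frac{1}{b^{\ell}}\sum_{r=0}^{m-1}\kappa_r\, u^{(j)}_{r+\ell} = \psi_w(\bsy_{k,j}),
\]
where $\bsy_{k,j}\in\FF_b^w$ has $\ell$-th component $\sum_{r=0}^{m-1}\kappa_r u^{(j)}_{r+\ell}$. On the other hand, if $C_j = (c^{(j)}_{i,r})_{1\le i\le w,\, 1\le r\le m}$ is the Hankel matrix with $c^{(j)}_{i,r} = u^{(j)}_{i+r-1}$, then the $\ell$-th entry of $C_j(\kappa_0,\dots,\kappa_{m-1})^\top$ is $\sum_{r=1}^{m} u^{(j)}_{\ell+r-1}\kappa_{r-1}$, which after re-indexing $r' = r-1$ coincides exactly with the $\ell$-th entry of $\bsy_{k,j}$. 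Comparing with \eqref{eq:pt-generation} shows that the $k$-th point of $P(p,\bsg,w)$ coincides with the $k$-th point of the digital net $P(C_1,\dots,C_s)$, which proves the claim.

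The argument is really just definition-unwinding, so I do not foresee any substantive obstacle; the only step requiring a little care is the handling of the nonnegative powers of $x$ that may appear in $k(x)g_j(x)/p(x)$ (since $\deg(k g_j)$ can exceed $\deg(p)$). These are harmlessly dropped by $\nu_w$, which is why the formula involves only the coefficients $u^{(j)}_i$ with $i\ge 1$, matching exactly the Hankel pattern $c^{(j)}_{i,r}=u^{(j)}_{i+r-1}$. Beyond that, the proof is simply matching indices $\ell \leftrightarrow i$ and $r \leftrightarrow r-1$ between the polynomial-lattice construction and the digital-net construction.
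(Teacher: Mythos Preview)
Your proof is correct and is exactly the standard direct-computation argument. The paper does not spell out a proof of this lemma at all; it simply refers the reader to Chapter~10 of the Dick--Pillichshammer book, where essentially the same index-matching verification is carried out.
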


\begin{remark}
Polynomial lattice point sets of infinite precision $w=\infty$ have been studied in the randomized setting carefully in \cite[Section~5]{DGS22}. We also refer to a relevant work \cite{GSY16} on digital nets with infinite digit expansions. In this paper, we focus only on the case $w=\infty$ in the theoretical analysis, but the result can be well-approximated by a large enough $w$. Note that, for $w=\infty$, QMC integration is given by
\begin{align*}
    Q_{P(p,\bsg)}(f) := \lim_{w\to \infty}Q_{P(p,\bsg,w)}(f)=\frac{1}{b^m}\sum_{k=0}^{b^m-1}f(\bsx_k-),
\end{align*}
where $f(\bsx-)=\lim_{\bsy\nearrow \bsx}f(\bsy)$ denotes the component-wise left limit. Therefore, if $f$ is not left continuous, we may have 
\[ Q_{P(p,\bsg)}(f)\neq Q_{P(p,\bsg,\infty)}(f)=\frac{1}{b^m}\sum_{k=0}^{b^m-1}f(\bsx_k),\]
whereas such a problem does not happen in this paper as we only deal with continuous functions.
\end{remark}

In the subsequent analysis, we restrict the modulus $p$ to be monic and irreducible. For $m\in \NN$, we write
\[ \PP_m:=\left\{ p\in \FF_b[x]\;\middle|\; \text{$\deg(p)=m$ and $p$ is monic and irreducible}\right\},\]
and
\[ \GG_m:=\left\{ g\in \FF_b[x]\;\middle|\; \text{$g\neq 0$ and $\deg(g)<m$} \right\}.\]
We shall always choose $p\in \PP_m$ and $\bsg \in (\GG_m)^s$. It is obvious that $|\GG_m|=b^m-1$ and also it is known from \cite[Lemma~4]{Po13} that $|\PP_m|\geq b^m/(2m).$

\subsection{Duality and some new results}

The error of QMC integration using digital nets is usually analyzed through the concept of dual nets, which are defined as follows.

\begin{definition}[Dual net]\label{def:dual_net}
Let $m \in \NN$, $n \in \NN \cup \{\infty\}$ and $C_1, \dots, C_s \in \Fb^{n \times m}$. The dual net of the digital net $P(C_1, \dots, C_s)$ is defined by
\[
P^\perp(C_1,\ldots,C_s) := \left\{\bsk \in \NN_0^s\;\middle|\; C_1^\top \tr_n(\vec{k}_1) + \cdots + C_s^\top \tr_n(\vec{k}_s) = \bszero \in \FF_b^m \right\},
\]
where we use the notation
\[ \tr_n(\vec{k})=(\kappa_0,\kappa_1,\ldots,\kappa_{n-1})^{\top}\in \FF_b^{n},\]
for $k=\kappa_0+\kappa_1b+\cdots\in \NN_0$, which stands for the truncated $b$-adic $n$-digit representation.
\end{definition}

In this subsection we give some new results on dual nets for linearly scrambled digital nets and polynomial lattice point sets, respectively.

\subsubsection{Linear scrambled digital nets}

We first describe an alternative way to Lemma~\ref{lem:t-value} for determining the least $t$-value of a digital net through its dual net. To do so, we need to introduce the Niederreiter--Rosenbloom--Tsfasman (NRT) weight.
\begin{definition}[NRT weight]\label{def:NRT_weight}
For $k\in \NN$, we denote its $b$-adic expansion by
\[ k=\kappa_1 b^{c_1-1}+\kappa_2 b^{c_2-1}+\cdots +\kappa_v b^{c_v-1},\]
with $\kappa_1,\ldots,\kappa_v\in \{1,\ldots,b-1\}$ and $c_1>c_2>\cdots>c_v>0$. The NRT weight $\mu_1: \NN_0\to \NN_0$ is defined by $\mu_1(k)=c_1$ and $\mu_1(0)=0$, or equivalently by $\mu_1(k)=\lceil \log_b(k+1)\rceil.$ Moreover, in the case of vectors in $\NN_0^s$, we define
\[ \mu_1(\bsk) = \sum_{j=1}^{s}\mu_1(k_j).\]
\end{definition}

Now the $t$-value and the dual net of a digital net can be associated with each other in the following way. We refer to \cite[Chapter~7]{DP10book} for the proof.
\begin{lemma}\label{lem:dual-t}
Let $m \in \NN$, $n \in \NN \cup \{\infty\}$ and $C_1, \dots, C_s \in \Fb^{n \times m}$. Define
\[ \mu_1(P^{\perp}(C_1,\ldots,C_s)):=\min_{\bsk\in P^{\perp}(C_1,\ldots,C_s)\setminus \{\bszero\}}\mu_1(\bsk). \]
Then the least $t$-value of the digital net $P(C_1,\ldots,C_s)$ equals 
\[ m-\mu_1(P^{\perp}(C_1,\ldots,C_s))+1.\]
\end{lemma}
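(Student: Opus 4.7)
The plan is to combine Lemma~\ref{lem:t-value}, which reads the $t$-value off the linear independence of row prefixes of the generating matrices, with a dictionary that translates such dependencies into nonzero vectors of the dual net. Writing $t^{\ast}$ for the least $t$-value of $P(C_1,\ldots,C_s)$ and $\mu^{\ast}:=\mu_1(P^{\perp}(C_1,\ldots,C_s))$, the goal is to establish $t^{\ast}=m-\mu^{\ast}+1$.

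First I would unpack the NRT weight. Since $\mu_1(k)$ is the $1$-based position of the highest nonzero $b$-adic digit of $k$, the inequality $\mu_1(k_j)\le a_j$ is equivalent to requiring that the entries of $\vec{k}_j=(\kappa_{j,0},\kappa_{j,1},\ldots)^{\top}$ vanish at indices $a_j,a_j+1,\ldots$. Consequently $C_j^{\top}\tr_n(\vec{k}_j)$ is a linear combination, with coefficients $\kappa_{j,0},\ldots,\kappa_{j,a_j-1}$, of the first $a_j$ columns of $C_j^{\top}$, i.e.\ of the first $a_j$ rows of $C_j$. Thus a nonzero $\bsk\in P^{\perp}(C_1,\ldots,C_s)$ with $\mu_1(k_j)\le a_j$ for every $j$ is precisely the same datum as a nontrivial $\FF_b$-linear dependence among the rows collected in the list (first $a_1$ rows of $C_1$,\,\ldots,\,first $a_s$ rows of $C_s$), and vice versa.

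Next I would invoke Lemma~\ref{lem:t-value}: $t^{\ast}$ is the smallest $t\ge 0$ such that every decomposition $a_1+\cdots+a_s=m-t$ with $a_j\in\NN_0$ yields a linearly independent row list. Equivalently, $d^{\ast}:=m-t^{\ast}+1$ is the smallest integer $d\ge 1$ for which \emph{some} decomposition $a_1+\cdots+a_s=d$ admits a nontrivial dependence. By the dictionary above, any such dependence yields a nonzero $\bsk\in P^{\perp}$ with $\mu_1(\bsk)=\sum_j\mu_1(k_j)\le d$, giving $\mu^{\ast}\le d^{\ast}$. Conversely, given a nonzero $\bsk\in P^{\perp}$ realising $\mu_1(\bsk)=\mu^{\ast}$, the choice $a_j:=\mu_1(k_j)$ provides a decomposition of $\mu^{\ast}$ whose associated linear combination is nontrivial (the top coefficient $\kappa_{j,a_j-1}$ being nonzero for each $j$ with $a_j\ge 1$), so $d^{\ast}\le\mu^{\ast}$. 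Chaining the two inequalities delivers $m-t^{\ast}+1=\mu^{\ast}$.

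The main obstacle is purely bookkeeping: one must keep straight the distinction between row and column indexing under the transposition $C_j\leftrightarrow C_j^{\top}$, between $0$-based digit indices and the $1$-based NRT position, and verify that the choice $a_j=\mu_1(k_j)$ is simultaneously optimal on both sides. The extension to $n=\infty$ requires only a quick sanity check, since each $\vec{k}_j$ is still finitely supported so $\tr_n(\vec{k}_j)$ and the same row-prefix argument continue to make sense.
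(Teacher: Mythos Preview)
Your argument is correct and is precisely the standard route: you translate the row-independence criterion of Lemma~\ref{lem:t-value} into dual-net language via the observation that the coefficients of a linear relation among the first $a_j$ rows of the $C_j$ are exactly the $b$-adic digits of a nonzero $\bsk\in P^{\perp}$ with $\mu_1(k_j)\le a_j$, and then match the two thresholds. The paper itself does not supply a proof of this lemma but refers the reader to \cite[Chapter~7]{DP10book}, where essentially the same argument is given.
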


In what follows, we give some new results on the dual net. Although we have already mentioned in Remark~\ref{rem:preserve_tvalue} that a linear scrambling preserves the $t$-value, we need a bit stronger result as follows.

\begin{lemma}
Let $k\in \NN$ and let us write $c := \mu_1(k)$. Then, for any $L\in \cL_{\infty,\infty}$, by letting $\ell\in \NN$ such that $\vec{\ell}=L^\top\vec{k}$, we always have $\mu_1(\ell) = c$. Moreover, for any $k'\in \NN$ satisfying $\mu_1(k') = c$ and any $n\geq c$, if $L$ is randomly sampled from $\cL_{\infty,n}$, we have
\[
\prob{ L^\top \vec{k} = \vec{k}' } = \frac{1}{(b-1)b^{c-1}}.
\]
\end{lemma}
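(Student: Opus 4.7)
My plan is to analyze $\vec{\ell} = L^\top \vec{k}$ coordinate by coordinate. Because $L$ is lower triangular, the $i$-th coordinate $(L^\top \vec{k})_i = \sum_{j \geq i} L_{j,i}\vec{k}_j$ only involves entries in the $i$-th column of $L$. The hypothesis $\mu_1(k) = c$ means exactly that $\vec{k}_{c-1} \neq 0$ while $\vec{k}_j = 0$ for every $j \geq c$. Plugging this in, one sees that $(L^\top \vec{k})_i = 0$ for $i \geq c$, whereas $(L^\top \vec{k})_{c-1} = L_{c-1,c-1}\vec{k}_{c-1}$ is nonzero since the diagonal entry $L_{c-1,c-1}$ is nonzero by definition of $\cL_{\infty,\infty}$. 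Thus $\mu_1(\ell) = c$, giving the first assertion.

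For the probability statement, the crucial observation is that the constraint $L^\top \vec{k} = \vec{k}'$ decouples across the columns of $L$: the $i$-th coordinate of $L^\top \vec{k}$ depends only on the $i$-th column of $L$, and the entries in different columns of a random $L \in \cL_{\infty,n}$ are sampled independently. Hence the sought probability factors as a product over $i \in \{0, 1, \ldots, n-1\}$ of the probability that the $i$-th coordinate constraint is satisfied, and I will evaluate these factors one column at a time.

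For $i \geq c$, which arises only when $n > c$, both sides of the $i$-th constraint vanish by the same argument as in the first part, so the factor equals $1$. For $i = c-1$, the constraint reduces to $L_{c-1,c-1}\vec{k}_{c-1} = \vec{k}'_{c-1}$ with both $\vec{k}_{c-1}$ and $\vec{k}'_{c-1}$ nonzero, so $L_{c-1,c-1}$ is uniquely determined as a nonzero element of $\FF_b$; since $L_{c-1,c-1}$ is drawn uniformly from the $b-1$ nonzero elements, this factor equals $1/(b-1)$. The hypothesis $n \geq c$ is precisely what makes this column present in $L$.

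For the remaining columns $i \in \{0, 1, \ldots, c-2\}$ the constraint is $\sum_{j=i}^{c-1} L_{j,i}\vec{k}_j = \vec{k}'_i$. Rather than splitting into subcases according to whether $\vec{k}_i$ vanishes, my plan is to isolate the off-diagonal entry $L_{c-1,i}$: since $c-1 > i$, it is uniform on $\FF_b$ and independent of the remaining entries in the column, and its coefficient $\vec{k}_{c-1}$ is nonzero. Therefore the contribution $L_{c-1,i}\vec{k}_{c-1}$ is uniform on $\FF_b$, and by independence the entire left-hand side of the constraint is uniform on $\FF_b$, making each such factor equal to $1/b$. Multiplying the $c-1$ factors of $1/b$ with the single factor $1/(b-1)$ yields the claimed value $1/((b-1)b^{c-1})$. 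The main point requiring care is confirming the column-wise independence of the constraints under the distribution on $\cL_{\infty,n}$, which follows directly from the independent sampling of the entries subject to the lower-triangular and nonzero-diagonal conditions.
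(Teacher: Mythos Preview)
Your proof is correct and uses essentially the same idea as the paper's, just organized column-by-column rather than row-by-row. The paper observes that only the first $c$ rows of $L$ affect $L^\top\vec{k}$, and that for any choice of the first $c-1$ rows there is exactly one admissible $c$-th row (out of $(b-1)b^{c-1}$ equally likely possibilities) making $L^\top\vec{k}=\vec{k}'$; your decomposition isolates the same pivotal entries $L_{c-1,i}$ one column at a time and multiplies the resulting factors $1/(b-1)$ and $(1/b)^{c-1}$, which amounts to the same count.
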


\begin{proof}
The first claim is trivial from the definition of $\cL_{w,n}$. Noting that only the first $c$ rows of $L$ affect the result of the multiplication $L^\top \vec{k}$, and also that, for any choice of the first $c-1$ rows of $L$, there exists exactly one vector for the $c$-th row of $L$ such that $L^\top \vec{k} = \vec{k}'$ holds, the second claim follows immediately from the fact that the number of possible choices for the $c$-th row is $(b-1)b^{c-1}$.
\end{proof}

This lemma can be trivially extended to the multi-dimensional case as follows.

\begin{lemma}\label{lem:scrambling_prob}
For $\bsk = (k_1, \dots, k_s) \in \NN_0^s$, let us write $c_j := \mu_1(k_j)$ and $u := \{j \mid k_j \neq 0\}$. Then, for any $j$ and $L_j \in \cL_{\infty,\infty}$, by letting $\ell_j$ such that $\vec{\ell}_j=L_j^{\top}\vec{k}_j$, we have $\mu_1(\ell_j) = c_j$.
Moreover, for any $\bsk'=(k'_1,\ldots,k'_s)\in \NN_0^s$ with $\mu_1(k'_j) = c_j$ for all $j$ and any $n\geq \max_j c_j$, if $L_1,\dots,L_s$ are independently and randomly sampled from $\cL_{\infty,n}$, we have
\[
\prob{ L_j^\top \vec{k}_j = \vec{k}'_j \text{ for all $j$} }
= \left(\frac{b}{b-1}\right)^{|u|}b^{-(c_1+\cdots+c_s)}.
\]
\end{lemma}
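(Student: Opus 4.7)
The plan is to reduce everything to a product of independent single-coordinate events and then plug in the previous single-dimensional lemma coordinate-by-coordinate.

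First I would dispose of the first assertion. Since the matrices $L_1,\ldots,L_s$ act on different coordinates of $\bsk$ independently, the identity $\vec{\ell}_j = L_j^{\top}\vec{k}_j$ is a purely one-dimensional equation for each $j$. The previous lemma already guarantees that any $L_j \in \cL_{\infty,\infty}$ preserves the NRT weight of a nonzero argument, and for $k_j = 0$ (i.e.\ $c_j = 0$) we trivially get $\ell_j = 0$. So $\mu_1(\ell_j) = c_j$ for every $j$.

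For the probabilistic claim, the key observation is that $L_1,\ldots,L_s$ are drawn independently from $\cL_{\infty,n}$, so
\[
\prob{ L_j^\top \vec{k}_j = \vec{k}'_j \text{ for all $j$} \mid L_j \in \cL_{\infty,n}} = \prod_{j=1}^s \prob{ L_j^\top \vec{k}_j = \vec{k}'_j \mid L_j \in \cL_{\infty,n}}.
\]
I would then split the product according to whether $j \in u$ or not. For $j \notin u$ we have $k_j = 0$, hence $L_j^{\top}\vec{k}_j = \bszero$ deterministically, and the assumption $\mu_1(k'_j) = c_j = 0$ forces $k'_j = 0$; the corresponding factor equals $1$. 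For $j \in u$ the hypothesis $n \geq c_j$ is in force, so the previous (single-coordinate) lemma yields a factor of $1/((b-1)b^{c_j-1})$.

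Collecting the nontrivial factors and rewriting gives
\[
\prod_{j \in u}\frac{1}{(b-1)b^{c_j-1}} = \left(\frac{b}{b-1}\right)^{|u|}\prod_{j\in u} b^{-c_j} = \left(\frac{b}{b-1}\right)^{|u|} b^{-(c_1+\cdots+c_s)},
\]
where the last equality uses $c_j = 0$ for $j \notin u$. This is the stated formula. The whole argument is essentially bookkeeping on top of the previous lemma; I do not anticipate any real obstacle — the only subtle point worth flagging is the $j \notin u$ case, where one must observe that the condition $\mu_1(k'_j) = 0$ is equivalent to $k'_j = 0$, which makes the event deterministic rather than of probability zero.
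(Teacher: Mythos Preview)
Your argument is correct and is exactly the coordinate-wise reduction to the one-dimensional lemma that the paper has in mind; the paper itself states only that the result ``can be trivially extended to the multi-dimensional case'' and gives no further proof.
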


The following result is a straightforward generalization of the recent result shown in \cite[Lemma~3.4]{GS22} by building up on \cite[Corollary~13.8]{DP10book}. We also refer to \cite[Lemma~2.2]{Sk06} for the previously known result.

\begin{lemma}\label{lem:gain_coeff}
Let $P$ be a digital net over $\FF_b$ of size $b^m$. For a non-empty $u \subseteq 1{:}s$, let $P_u:=\{\bsx_u\mid \bsx\in P\}$ be the projection of $P$ onto the $|u|$-dimensional unit cube and assume that $P_u$ is a digital $(t_u,m,|u|)$-net in base $b$. Then, for any $\bsc_u=(c_j)_{j\in u}\in \NN^{|u|}$, by writing $|\bsc_u|_1=\sum_{j\in u}c_j$, we have
\begin{align*}
& \left|\{ \bsk = (k_1, \dots, k_s) \in P^\perp \;\middle|\; \text{$\mu_1(k_j) = c_j$ if $j\in u$ and $k_j=0$ otherwise}\}\right| \\
& \quad \leq
\begin{cases}
0 & \text{if $|\bsc_u|_1 \le m-t_u$},\\
(b-1)^{|\bsc_u|_1-(m-t_u)} & \text{if $m-t_u < |\bsc_u|_1 \le m-t_u+|u|$},\\
(b-1)^{|u|}b^{|\bsc_u|_1-(m-t_u+|u|)} & \text{if $|\bsc_u|_1 > m-t_u+|u|$}.
\end{cases}
\end{align*}
In particular, we have
\begin{align*}
    & \left|\{ \bsk = (k_1, \dots, k_s) \in P^\perp \;\middle|\; \text{$\mu_1(k_j) = c_j$ if $j\in u$ and $k_j=0$ otherwise}\}\right| \\
    & \quad \leq \left(\frac{b-1}{b}\right)b^{|\bsc_u|_1-(m-t_u)}.
\end{align*}
\end{lemma}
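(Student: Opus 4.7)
The plan is to reduce to the projection and then convert the counting problem into a constrained linear-algebra problem over $\FF_b$. Since $k_j = 0$ for $j \notin u$ makes the corresponding columns of the defining equation $\sum_j C_j^\top \tr_n(\vec{k}_j) = \bszero$ drop out, the set in question is in bijection with $\{\bsk_u \in P_u^\perp : \mu_1(k_j) = c_j \text{ for all } j \in u\}$, where $P_u^\perp$ is the dual of the projected digital $(t_u, m, |u|)$-net $P_u$. Thus I can assume without loss of generality that $u = 1{:}s$. Case 1 ($|\bsc_u|_1 \leq m - t_u$) is then immediate from Lemma~\ref{lem:dual-t}: the minimum NRT weight on $P_u^\perp \setminus \{\bszero\}$ is $m - t_u + 1$, so any admissible $\bsk_u$ would have to be zero, which is excluded by $\mu_1(k_j) = c_j \geq 1$.

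For cases 2 and 3, I view the membership in $P_u^\perp$ as a kernel condition for the linear map $T : \bigoplus_{j \in u} \FF_b^{c_j} \to \FF_b^m$ sending $(v_j)_{j \in u}$ to $\sum_j C_j^\top v_j$, where each $v_j = \tr(\vec{k}_j)$ is supported on positions $0, \ldots, c_j - 1$ and has nonzero top coordinate $\tau_j := (v_j)_{c_j - 1}$. Since $|\bsc_u|_1 > m - t_u$, I can pick non-negative integers $a_j \leq c_j$ summing to $m - t_u$; Lemma~\ref{lem:t-value} then guarantees that $m - t_u$ columns among those of $T$ are $\FF_b$-linearly independent, so $\mathrm{rank}(T) \geq m - t_u$ and $|\ker T| \leq b^{|\bsc_u|_1 - (m - t_u)}$. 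This already delivers the ``In particular'' bound up to constants and specializes to the case-3 rate, but the top-digit constraint sharpens it in two different ways depending on the regime.

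To extract the $(b-1)$ factors, I split each $v_j = (w_j, \tau_j)$ with $w_j \in \FF_b^{c_j - 1}$ the lower digits, condition on the tuple $(w_j)_{j \in u}$, and ask how many top-digit tuples $\bstau \in (\FF_b^\times)^{|u|}$ complete the relation $M \bstau = -\sum_{j,\, i < c_j - 1} (w_j)_i \cdot (\text{row $i$ of } C_j)^\top$, where $M$ is the $m \times |u|$ matrix whose $j$-th column is row $c_j - 1$ of $C_j$. A second application of Lemma~\ref{lem:t-value}, now with $a_j$ chosen to saturate $c_j$ on a carefully selected subset of coordinates and equal to $c_j - 1$ on the rest, bounds the rank of $M$ and thereby the number of nonzero completions per $(w_j)$. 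Summing over the free lower digits then yields the $(b-1)^{|\bsc_u|_1 - (m - t_u)}$ bound of case 2 and the $(b-1)^{|u|} b^{|\bsc_u|_1 - (m - t_u) - |u|}$ bound of case 3.

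The main obstacle is the bookkeeping in case 2: showing that the top-digit factor $(b-1)$ appears on exactly $|\bsc_u|_1 - (m - t_u)$ dimensions rather than on all $|u|$ of them. This requires distinguishing ``forced'' top coordinates (where $a_j = c_j$ saturates the $t$-value inequality) from ``free'' ones (where $a_j = c_j - 1$ allows the lower digits to absorb the remaining freedom), which is the combinatorially delicate step. Once both cases are established, the unified ``In particular'' statement follows from the elementary inequality $(b-1)^a \leq (b-1) b^{a - 1}$ valid for every integer $a \geq 1$: apply it with $a = |\bsc_u|_1 - (m - t_u)$ in case 2 and with $a = |u|$ together with the free factor $b^{|\bsc_u|_1 - (m - t_u) - |u|}$ in case 3; case 1 is trivially covered as the count is zero.
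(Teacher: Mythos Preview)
The paper does not supply its own proof of this lemma; it merely cites \cite[Lemma~3.4]{GS22} and \cite[Corollary~13.8]{DP10book} and calls the result a straightforward generalization, so there is no in-paper argument to compare against. Your reduction to the projection, your handling of case~1 via Lemma~\ref{lem:dual-t}, your linear-algebraic framing (counting kernel elements of $T$ with nonzero top digits), your choice of $a_j$'s (saturating at $c_j$ on a subset $u\setminus v$ and at $c_j-1$ on $v$ with $|v|=d:=|\bsc_u|_1-(m-t_u)$ in case~2), and your derivation of the ``In particular'' bound via $(b-1)^a\le(b-1)b^{a-1}$ are all correct.

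The gap is in how you execute case~2. Conditioning on all the lower digits $(w_j)_{j\in u}$ and then ``bounding the rank of $M$'' does not deliver $(b-1)^d$: the $t$-value condition with your $a_j$'s only shows that the $|u|-d$ columns of $M$ indexed by $u\setminus v$ lie in the independent set $\{r_{j,i}:i<a_j\}$, hence $\operatorname{rank}(M)\ge |u|-d$, which bounds the number of completions $\bstau$ per fixed $(w_j)$ by $b^{d}$; summing over all $(w_j)\in\bigoplus_j\FF_b^{c_j-1}$ is then far too lossy. The correct conditioning is the reverse of what you wrote: fix the $d$ free top digits $(\tau_j)_{j\in v}\in(\FF_b^\times)^{d}$, and observe that the remaining $m-t_u$ coordinates---all of $(w_j)$ \emph{together with} the forced top digits $(\tau_j)_{j\notin v}$---are precisely the coefficients multiplying the linearly independent vectors $\{r_{j,i}:i<a_j\}$ in the equation $T((v_j))=\bszero$, and hence uniquely determined (or nonexistent). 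This gives at most $(b-1)^d$ solutions directly; the residual constraints $\tau_j\ne0$ for $j\notin v$ can only shrink the count. Case~3 follows from the same scheme with $a_j\le c_j-1$ for all $j$ (so $v=u$): the $(b-1)^{|u|}$ comes from the $|u|$ free top digits, the $b^{|\bsc_u|_1-(m-t_u)-|u|}$ from the remaining free lower digits, and the rank bound that matters is $\operatorname{rank}(T')\ge m-t_u$ (for the map $T'$ built from the lower rows), not a bound on $\operatorname{rank}(M)$.
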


We now prove what we need in the subsequent analysis. The result can be regarded as a generalization of the known result recently shown in \cite[Lemma~3 and Corollary~1]{PO22}.
\begin{lemma}\label{lem:prob_dual}
For $m\in \NN$, let $P=P(C_1,\ldots,C_s)$ be a digital net over $\FF_b$ of size $b^m$ with square generating matrices $C_1,\ldots,C_s \in \FF_b^{m \times m}$, and assume that, for every non-empty $u\subseteq 1{:}s$, the projection $P_u$ is a digital $(t_u,m,|u|)$-net in base $b$. Moreover, let $L_1,\dots,L_s$ be independently and randomly sampled from $\cL_{\infty,m}$. For any $\bsk = (k_1, \dots, k_s) \in \NN_0^s \setminus \{\bszero\}$, by writing $u=\{j\mid k_j\neq 0\}$, we have
\[
\prob{\bsk \in P^\perp(L_1C_1,\ldots,L_sC_s)}
\leq \left(\frac{b}{b-1}\right)^{|u|-1}b^{-m+t_u}, \]
if $k_j<b^m$ holds for all $j$. Otherwise, assuming that each $C_j$ is non-singular, we have
\[
\prob{\bsk \in P^\perp(L_1C_1,\ldots,L_sC_s) }
= \frac{1}{b^m}.
\]
\end{lemma}

\begin{proof}
First let us consider the case $k_j<b^m$ for all $j$. 
From Definition~\ref{def:dual_net}, we have an equivalence
\begin{align*}
& \bsk \in P^\perp(L_1C_1,\ldots,L_sC_s) \\
&\iff
(L_1C_1)^{\top}\vec{k}_1 + \dots + (L_sC_s)^{\top}\vec{k}_s = \bszero \in \FF_b^m\\
&\iff
\bsk' \in P^\perp(C_1,\ldots,C_s)\quad \text{with $k'_j\in \NN_0$ satisfying $\vec{k}'_j=L_j^{\top}\vec{k}_j$.}
\end{align*}
It follows from Lemmas~\ref{lem:scrambling_prob} and \ref{lem:gain_coeff} that
\begin{align*}
& \prob{\bsk \in P^\perp(L_1C_1,\ldots,L_sC_s) }\\
& = \sum_{\bsk' \in P^\perp(C_1,\ldots,C_s)} \prob{ L_j^\top \vec{k}_j = \vec{k}'_j \text{ for all $j$} } \\
& \leq
\left(\frac{b}{b-1}\right)^{|u|}b^{-|\bsc_u|_1} \\
& \quad \quad \times \left|\{ \bsk' \in P^\perp(C_1,\ldots,C_s) \;\middle|\; \text{$\mu_1(k'_j) = c_j$ if $j\in u$ and $k'_j=0$ otherwise}\}\right| \\
& \leq \left(\frac{b}{b-1}\right)^{|u|}b^{-|\bsc_u|_1}\left(\frac{b-1}{b}\right)b^{|\bsc_u|_1-(m-t_u)}=\left(\frac{b}{b-1}\right)^{|u|-1}b^{-(m-t_u)},
\end{align*}
which proves the first claim.

Let us move on to the second case in which there exists at least one index $j^*$ such that $k_{j^*}\geq b^m$. Because of the assumption that all $C_j$ are non-singular, we have an equivalence
\begin{align*}
\bsk \in P^\perp(L_1C_1,\ldots,L_sC_s)
&\iff
(L_1C_1)^{\top}\vec{k}_1 + \dots + (L_sC_s)^{\top}\vec{k}_s = \bszero \in \FF_b^m\\
&\iff
L_{j^*}^\top \vec{k}_{j^*} = - \sum_{j\neq j^*} (C_{j^*}^{\top})^{-1} (L_jC_j)^\top \vec{k}_j.
\end{align*}
Since $k_{j^*}\geq b^m$, there exists a unique integer $v>m$ such that $\vec{k}_{j^*,v} \neq 0$ and $\vec{k}_{j^*,v+1}=\vec{k}_{j^*,v+2}=\cdots=0$. Then we see that only the first $v$ rows of $L_{j^*}$ affect the result of the multiplication $L_{j^*}^\top \vec{k}_{j^*}$, and that, for any choice of the first $v-1$ rows of $L_{j^*}$, $L_{j^*}^\top \vec{k}_{j^*}$ is distributed uniformly on the set $\FF_b^m$ when the $v$-th row of $L_{j^*}$ is distributed uniformly on the set $\FF_b^m$. Hence we have
\[
\prob{L_{j^*}^\top \vec{k}_{j^*} = - \sum_{j\neq j^*} (C_{j^*}^{\top})^{-1} (L_jC_j)^\top \vec{k}_j} = \frac{1}{b^m},
\]
which proves the second claim.
\end{proof}

\begin{remark}
We note that, for the second claim of Lemma~\ref{lem:prob_dual}, we have assumed that each $C_j$ is non-singular. This implies that any one-dimensional projection of $P(C_1,\ldots,C_s)$ is a $(0,m,1)$-net in base $b$. Such a good property holds for Sobol' sequences and Niederreiter sequences, where a proper reordering of the rows of each matrix $C_j$ is necessary for the latter \cite{FL16}. Here reordering the rows does not change the $t$-value of nets shown in \eqref{eq:tvalue_Niederreiter}.
\end{remark} 

\subsubsection{Polynomial lattice point sets}
Here we show a result corresponding to polynomial lattice point sets. First, let us recall that the dual net of a polynomial lattice point set is expressed in a specific way as follows. We refer to \cite[Lemma~10.6]{DP10book} for the proof and also to \cite[Section~5.1]{DGS22} for a relevant result on the character property.
\begin{lemma}\label{lem:dual_plr}
For $m,s\in \NN$ and $w\in \NN\cup \{\infty\}$ with $w\geq m$, let $p\in \FF_b[x]$ with $\deg(p)=m$ and $\bsg\in (\FF_b[x])^s$ with $\deg(g_j)<m$. The dual net of the polynomial lattice point set $P(p,\bsg,w)$ is given by
\[ P^{\perp}(p,\bsg,w)=\left\{ \bsk\in \NN_0^s\;\middle|\;  \tr_w(\bsk(x))\cdot \bsg(x)\equiv 0 \pmod {p(x)}\right\},\]
where, extending the notation of Definition~\ref{def:dual_net}, we write
\[ tr_w(k(x))=\kappa_0+\kappa_1x+\cdots+\kappa_{w-1}x^{w-1},\]
for $k(x)=\kappa_0+\kappa_1x+\cdots\in \FF_b[x]$, which stands for the truncated polynomial over $\FF_b$ of degree less than $w$ and is applied component-wise to a vector. For $w=\infty$ we simply have $tr_w(k(x))=k(x)$ for any $k\in \NN_0$ and write $P^{\perp}(p,\bsg)$ instead of $P^{\perp}(p,\bsg,\infty)$.
\end{lemma}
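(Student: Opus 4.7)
The strategy is to invoke the preceding lemma, which realizes $P(p,\bsg,w)$ as a digital net whose generating matrices $C_j=(c^{(j)}_{i,r})$ are of Hankel type with entries $c^{(j)}_{i,r}=u^{(j)}_{i+r-1}$, and then translate the linear-algebraic dual net condition $\sum_{j=1}^s C_j^\top \tr_w(\vec{k}_j)=\bszero\in \FF_b^m$ into the claimed polynomial divisibility. The dictionary between the two sides is supplied by the defining Laurent expansion $g_j(x)/p(x)=\sum_{i=1}^{\infty} u^{(j)}_i x^{-i}$ in $\FF_b((x^{-1}))$.

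First I would unpack the dual net condition componentwise. Writing $k_j=\sum_i \kappa^{(j)}_i b^i$ with digits $\kappa^{(j)}_i\in \FF_b$, the $r$-th coordinate of $C_j^\top \tr_w(\vec{k}_j)$ equals $\sum_{i=0}^{w-1}\kappa^{(j)}_i\, u^{(j)}_{i+r}$, so $\bsk\in P^\perp(p,\bsg,w)$ iff
\begin{equation*}
    \sum_{j=1}^s \sum_{i=0}^{w-1}\kappa^{(j)}_i\, u^{(j)}_{i+r} = 0 \quad \text{in } \FF_b, \quad r=1,\ldots,m.
\end{equation*}
On the other hand, multiplying out $\tr_w(k_j(x))\,g_j(x)/p(x)=\bigl(\sum_{i=0}^{w-1}\kappa^{(j)}_i x^i\bigr)\bigl(\sum_{\ell=1}^\infty u^{(j)}_\ell x^{-\ell}\bigr)$ and extracting the coefficient of $x^{-r}$, the displayed sum is precisely the $x^{-r}$-coefficient of $\sum_{j=1}^s \tr_w(k_j(x))\,g_j(x)/p(x)\in \FF_b((x^{-1}))$. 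Hence the dual net condition says exactly that the coefficients of $x^{-1},\ldots,x^{-m}$ in this Laurent series all vanish.

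The remaining step is to promote \emph{``first $m$ negative-power coefficients vanish''} to \emph{``$p(x)$ divides the polynomial sum''}. Writing $\sum_{j=1}^s \tr_w(k_j(x))g_j(x) = p(x)Q(x)+R(x)$ with $\deg R<m$, the Laurent series becomes $Q(x)+R(x)/p(x)$, whose negative-power part is $R(x)/p(x)=\sum_{i=1}^\infty r_i x^{-i}$. If $r_1=\cdots=r_m=0$, then $R(x)=p(x)\sum_{i>m}r_i x^{-i}$; since $\deg p=m$ and the series factor has leading order at most $x^{-(m+1)}$, the right-hand side has strictly negative polynomial degree, forcing $R\equiv 0$. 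The converse is immediate, which establishes the equivalence and hence the lemma.

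The main obstacle is purely bookkeeping: aligning $b$-adic digit indices, the Hankel matrix row/column indices, and Laurent exponents so that the two calculations above visibly match. The case $w=\infty$ requires no separate treatment, since for any fixed $\bsk\in \NN_0^s$ only finitely many digits $\kappa^{(j)}_i$ are nonzero and all sums involved remain finite.
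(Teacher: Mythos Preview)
Your argument is correct and follows the standard route: translate the dual-net linear condition via the Hankel structure of the $C_j$ into the vanishing of the first $m$ negative-power Laurent coefficients of $\sum_j \tr_w(k_j(x))g_j(x)/p(x)$, then use the division algorithm in $\FF_b[x]$ to identify that with $p(x)\mid \sum_j \tr_w(k_j(x))g_j(x)$. The paper itself does not supply a proof of this lemma but simply refers to \cite[Lemma~10.6]{DP10book}; your write-up is essentially the argument one finds there and would serve perfectly well as a self-contained replacement for the citation.
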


Now the following result is what we need in the subsequent analysis. \begin{lemma}\label{lem:plr_walsh_average}
For $m,s\in \NN$, let $p$ and $\bsg$ be independently and randomly sampled from $\PP_m$ and $(\GG_m)^s$, respectively. For any $\bsk\in \NN_0^s\setminus \{\bszero\}$, it holds that
\[ \prob{\bsk\in P^{\perp}(p,\bsg)}\leq \frac{3\mu_1(\bsk)}{b^m-1}.\]
Moreover, if $k_j<b^m$ holds for all $j$, we have
\[ \prob{\bsk\in P^{\perp}(p,\bsg)}\leq \frac{1}{b^m-1}.\]
\end{lemma}
\begin{proof}
Let $\chi_{A}$ be the indicator function of an event $A$. It follows from Lemma~\ref{lem:dual_plr} that
\[  \prob{\bsk\in P^{\perp}(p,\bsg)} := \frac{1}{|\PP_m|}\sum_{p\in \PP_m}\frac{1}{|\GG_m|^s}\sum_{\bsg\in \GG_m^s}\chi_{\bsk\cdot \bsg\equiv 0 \pmod p},\]
where, with abuse of notation, $\bsk$ appearing on the right-hand side denotes the associated vector of polynomials in $\FF_b[x]$. If the modulus $p$ is a divisor of all the components of $\bsk$, on the one hand, the condition $\bsk\in P^{\perp}(p,\bsg)$ holds for any choice of $\bsg\in \GG_m^s$, which implies that 
\[ \frac{1}{|\GG_m|^s}\sum_{\bsg\in \GG_m^s}\chi_{\bsk\cdot \bsg\equiv 0 \pmod p}=1.\]

Assume, on the other hand, that there exists a non-empty subset $u\subseteq 1{:}s$ such that $k_j$ is not divisible by $p$ if and only if $j\in u$. Then the condition $\bsk\in P^{\perp}(p,\bsg)$ is equivalent to
$\bsk_u\cdot \bsg_u=\sum_{j\in u}k_jg_j\equiv 0 \pmod p.$ Here the cardinality of $u$ must be larger than 1, since, if $u=\{j\}$, the equivalent condition $k_jg_j\equiv 0 \pmod p$ contradicts the fact that $p$ is irreducible and $g_j\in \GG_m$. Thus, defining $\ell=\max_{j\in u}j$, we have $\ell\geq 2$, and the above condition $\sum_{j\in u}k_jg_j\equiv 0 \pmod p$ is further equivalent to
\[ k_{\ell}g_{\ell}\equiv -\bsk_{u\setminus \{\ell\}}\cdot \bsg_{u\setminus \{\ell\}}\equiv -\bsk_{1{:}\ell-1}\cdot \bsg_{1{:}\ell-1} \pmod p. \]
If $\bsk_{1{:}\ell-1}\cdot \bsg_{1{:}\ell-1}\equiv 0 \pmod p$ holds, no $g_{\ell}\in \GG_m$ satisfies this equation. Otherwise, there exists exactly one $g_{\ell}\in \GG_m$ which solves the equation. This leads to
\[ \frac{1}{|\GG_m|^s}\sum_{\bsg\in \GG_m^s}\chi_{\bsk\cdot \bsg\equiv 0 \pmod p}\leq \frac{1}{|\GG_m|}.\]

Now, since any polynomial $k\in \FF_b[x]$ has at most $\lfloor \deg(k)/m\rfloor $ prime divisors of a fixed degree $m$, by using $|\GG_m|=b^m-1$ and $|\PP_m|\geq b^m/(2m)$, we obtain
\begin{align*}
    \prob{\bsk\in P^{\perp}(p,\bsg)} & \leq \frac{1}{|\PP_m|}\sum_{\substack{p\in \PP_m\\ p\mid \bsk}}1+\frac{1}{|\PP_m|}\sum_{\substack{p\in \PP_m\\ p\nmid \bsk}}\frac{1}{|\GG_m|}\\
    & \leq \frac{1}{|\PP_m|}\frac{\max_{1\leq j\leq s}\deg(k_j)}{m}+\frac{1}{|\GG_m|}\\
    & \leq \frac{2\max_{1\leq j\leq s}\mu_1(k_j)}{b^m}+\frac{1}{b^m-1} \leq \frac{3\mu_1(\bsk)}{b^m-1},
\end{align*}
which proves the first statement of this lemma. The second statement follows immediately from the fact that there is no prime divisor of a fixed degree $m$ for any polynomial $k\in \FF_b[x]$ with $\deg(k)<m$.
\end{proof}

\section{Median QMC integration}\label{sec:theory}
Here we provide a basic framework for what we call \emph{median QMC integration}. Let $S$ be a collection of point sets in $[0,1]^s$. In the remainder of the paper, $S$ will be either a set of linearly scrambled digital nets
\begin{align}\label{eq:set1}
\left\{ P(L_1C_1,\ldots,L_sC_s)\;\middle|\; L_1,\ldots,L_s\in \cL_{\infty,n}\right\},
\end{align}
for some good generating matrices $C_1,\ldots,C_s\in \FF_b^{n\times m}$, or a set of polynomial lattices
\begin{align}\label{eq:set2}
\left\{ P(p,\bsg)\;\middle|\; p\in \PP_m, \bsg\in \GG_m^s\right\}. 
\end{align}
For an odd integer $r$, we draw point sets $P_1, \dots, P_r$ independently and randomly from the set $S$, and we approximate $I_s(f)$ by the median-of-means
\[  M_{r}(f) := \median\left(Q_{P_{1}}(f),\ldots,Q_{P_{r}}(f)\right). \]
Because of random selection for $P_1, \dots, P_r$, our estimate $M_{r}(f)$ is a random variable, which means that our median QMC integration is a randomized quadrature algorithm; for every realization of $P_1, \dots, P_r$, the resulting estimator has a certain worst-case error. In what follows, we are interested in a lower bound on the probability of a small worst-case error (less than or equal to $\epsilon$) with respect to independently and randomly chosen $P_1, \dots, P_r$. A study of our median QMC integration in terms of different randomized error criteria is left open for future research. We refer to \cite{KR19} as a relevant work in this research direction, which studies a probabilistic error guarantee of the form
\[ \sup_{\substack{f\in B\\ \|f\|_B\leq 1}}\prob{\left| A_n(f)-I_{s}(f)\right|> \epsilon} \leq \delta, \] 
for a randomized algorithm $A_n$ that uses $n$ function values.

\begin{remark}
Although we do not go into the details, the results shown in this section also apply to the collections of randomly digitally shifted point sets
\[ \left\{ P(L_1C_1,\ldots,L_sC_s)\oplus \Delta\;\middle|\; L_1,\ldots,L_s\in \cL_{\infty,n}, \Delta\sim U([0,1)^s)\right\},\] and 
\[ \left\{ P(p,\bsg)\oplus \Delta\;\middle|\; p\in \PP_m, \bsg\in \GG_m^s, \Delta\sim (U[0,1)^s)\right\}, \]
respectively. Here we write $P\oplus \Delta=\{\bsx\oplus \Delta\mid \bsx\in P\}$, and $\oplus$ denotes the digit-wise addition modulo $b$, that is, for $x=\xi_1/b+\xi_2/b^2+\cdots$ and $y=\eta_1/b+\eta_2/b^2+\cdots$, we define
\[ x\oplus y=\frac{\zeta_1}{b}+\frac{\zeta_2}{b^2}+\cdots, \quad \text{with $\zeta_i\equiv \xi_i+\eta_i \pmod b$,}\]
and apply component-wise to vectors. As mentioned in Remark~\ref{rem:preserve_tvalue}, the former set above is nothing but the set of randomly scrambled digital nets in the sense of Matou\v{s}ek \cite{Ma98}. Applying a random digital shift makes every individual estimator unbiased, whereas it is biased without a random digital shift. However, taking the median of $r$ repetitions makes the resulting final estimator biased regardless of whether a random digital shift is applied or not.
\end{remark}
\subsection{Basic result}
Let $B$ be a Banach space over $[0,1]^s$ with the norm $\|\cdot\|_B$. If a realization of $P_1,\ldots,P_r\in S$ is given, our median QMC integration $M_r$ is deterministic and the worst-case error is defined by
\[
  e^{\wor}(M_{r}; B) 
	:=  \sup_{\substack{f\in B\\ 
	 \|f\|_B\leq 1}} |M_{r}(f) -I_s(f)|.
\]
Then the question we are interested in is: \emph{What are the possible pairs $(\epsilon, \delta)$ that satisfy
    \[ \prob{e^{\wor}(M_{r}; B)\leq \epsilon}\geq 1-\delta, \]
if the probability is taken with respect to independently and randomly chosen $P_1, \dots, P_r$?} Regarding this question, we can establish the following ``meta" result.

\begin{proposition}\label{prop:meta}
Assume that there exists a pair $(\epsilon,\delta)$ such that $\delta\in (0,1/4)$ and 
\begin{align}\label{eq:existence}
    \prob{e^{\wor}(Q_{P}; B)\leq \epsilon} \geq 1-\delta
\end{align} 
holds if $P$ is randomly chosen from $S$. Then, if $P_1, \dots, P_r$ are independently and randomly chosen from $S$, we have
\[ \prob{e^{\wor}(M_{r}; B)\leq \epsilon}\geq 1-2^{r-1}\delta^{(r+1)/2}. \]
\end{proposition}

\begin{proof}
For each realization of $P_1,\ldots,P_r$, the corresponding worst-case error is bounded by
\begin{align*}
    e^{\wor}(M_{r}; B) & = \sup_{\substack{f\in B\\ 
	 \|f\|_B\leq 1}} \left|\median\left(Q_{P_{1}}(f)-I_s(f),\ldots,Q_{P_{r}}(f)-I_s(f)\right)\right| \\
	 & \leq \sup_{\substack{f\in B\\ 
	 \|f\|_B\leq 1}} \median\left(|Q_{P_{1}}(f)-I_s(f)|,\ldots,|Q_{P_{r}}(f)-I_s(f)|\right)\\
	 & \leq \sup_{\substack{f\in B\\ 
	 \|f\|_B\leq 1}} \median\left(\|f\|_B\, e^{\wor}(Q_{P_1}; B),\ldots,\|f\|_B\, e^{\wor}(Q_{P_r}; B)\right)\\
	 & = \median\left(e^{\wor}(Q_{P_1}; B),\ldots, e^{\wor}(Q_{P_r}; B)\right),
\end{align*}
where the first inequality follows from Jensen's inequality for medians, see \cite{Me05} and \cite[Lemma~2.6]{GL22}.

Following an argument similar to that in \cite[Proposition~2.1]{NP09}, the median of $e^{\wor}(Q_{P_1}; B),\ldots, e^{\wor}(Q_{P_r}; B)$ is larger than $\epsilon$ if and only if the number of point sets that satisfy $e^{\wor}(Q_{P_i}; B)>\epsilon$ is greater than or equal to $(r+1)/2$. Considering the Bernoulli scheme in which $e^{\wor}(Q_{P_i}; B)\leq \epsilon$ is interpreted as the ``success'' in the $i$-th trial, we obtain
\begin{align*}
    & \prob{\median\left(e^{\wor}(Q_{P_1}; B),\ldots, e^{\wor}(Q_{P_r}; B)\right)>\epsilon} \\
    & = \sum_{i=(r+1)/2}^{r}\binom{r}{i}\left(\prob{e^{\wor}(Q_{P}; B)>\epsilon}\right)^i\left(1-\prob{e^{\wor}(Q_{P}; B)>\epsilon}\right)^{r-i} \\
    & \leq \sum_{i=(r+1)/2}^{r}\binom{r}{i}\delta^i \leq \delta^{(r+1)/2}\sum_{i=(r+1)/2}^{r}\binom{r}{i}=2^{r-1}\delta^{(r+1)/2}.
\end{align*}
By combining this with the earlier bound on $e^{\wor}(M_{r}; B)$, we complete the proof.
\end{proof}

As already pointed out in \cite[Remark~2.7]{GL22}, since we assume $0<\delta<1/4$, the probability given in Proposition~\ref{prop:meta} converges to 1 exponentially fast as $r$ increases. Therefore it is clear that, with the help of taking the median, increasing $r$ amplifies the probability of success and what we need to do is to prove that most of the point sets in $S$ are ``good" in the sense of \eqref{eq:existence}. To prove such an existence, probably the most straightforward way is to apply Markov inequality which ensures that $\epsilon$ can be given by
\begin{align}\label{eq:markov}
    \epsilon=\frac{1}{\delta}\EE_{P\in S}\left[ e^{\wor}(Q_{P}; B)\right],
\end{align}
so that it suffices to evaluate the expectation of the worst-case error with respect to $P\in S$. However, it is generally hard to show a rate of convergence better than $N^{-1}$ by \eqref{eq:markov}. In such situations, together with Markov inequality, we shall use Jensen's inequality
\begin{align}\label{eq:jensen}
    \phi\left( \sum_{n}a_n\right)\leq \sum_{n}\phi( a_n),
\end{align} 
which holds for any sequence of non-negative real numbers $(a_n)_n$ and concave function $\phi: [0,\infty)\to [0,\infty)$, see \cite[Section~2.3]{DHP15}. Then $\epsilon$ can be given by
\begin{align}\label{eq:epsilon}
    \epsilon=\inf_{\phi\in \Phi}\phi^{-1}\left(\frac{1}{\delta}\EE_{P\in S}\left[ \phi(e^{\wor}(Q_{P}; B))\right] \right),
\end{align}
for a family $\Phi$ of concave functions $\phi.$

\subsection{Universality}

In order to demonstrate the universality of our median QMC integration, we prove upper bounds on the expected worst-case error (in the sense of \eqref{eq:markov} or \eqref{eq:epsilon}) in function spaces with different smoothness classes, which constitutes the main contribution of this paper. We consider three weighted function spaces with different smoothness classes as our target Banach space. By following the seminal work by Sloan and Wo\'{z}niakowski \cite{SW98}, we model the relative importance of a subset of variables $\bsx_u=(x_j)_{j\in u}$ by introducing a set of weight parameters $\bsgamma=(\gamma_u)_{u\subseteq 1{:}s}$ with $\gamma_u\in [0,1]$ for the first two spaces with finite smoothness. For the third one, we follow \cite{Su17,DGSY17} and introduce a sequence $u_1\geq u_2\geq \dots >0$ to define a weighted space of infinitely many times differentiable functions. For each function space, we derive the possible values of the quantity $\epsilon$ for a given $\delta\in (0,1),$ and then derive sufficient conditions on the weights for the obtained values to be bounded independently of the dimension. At the end of each sub-subsection, we include brief comments for comparison on what is known in the literature.

\subsubsection{Weighted Sobolev space of first order}\label{subsubsec:main1}

The first function space, studied for instance in \cite{SW98,DLP05}, requires the minimum differentiability for integrands among the three spaces we consider.

\begin{definition}[weighted Sobolev space of first order]\label{def:Sob_space_first}
For a set of weights $\bsgamma$, the weighted Sobolev space of first order, denoted by $\cF_{s,1,\bsgamma}^{\Sob}$, is a Banach space with the norm
\[ \|f\|_{s,1,\bsgamma}^{\Sob}:=\sum_{u\subseteq 1{:}s}\gamma_u^{-1}\int_{[0,1]^{|u|}}\left|\frac{\partial^{|u|}}{\partial \bsx_u}f(\bsx_u,\bsone)\right|\rd \bsx_u,\]
where $(\bsx_u,\bsone)$ denotes the vector $\bsy\in [0,1]^s$ such that $y_j=x_j$ if $j\in u$ and $y_j=1$ otherwise. For any subset $u\subseteq 1{:}s$ with $\gamma_u=0$, we assume that the corresponding integral over $[0,1]^{|u|}$ equals $0$ and we set $0/0=0$.
\end{definition}

It is well known that, for any point set $P$, the worst-case error in $\cF_{s,1,\bsgamma}^{\Sob}$ is bounded above by the weighted star discrepancy:
\[ e^{\wor}(Q_P; \cF_{s,1,\bsgamma}^{\Sob})\leq D^{*}_{\bsgamma}(P):=\sup_{\bsy\in [0,1]^s}\max_{\emptyset\neq u\subseteq 1{:}s} \gamma_u \left| \Delta_{P}(\bsy_u,\bsone)\right|,\]
where $\Delta_{P}$ is the local discrepancy function defined by
\[ \Delta_{P}(\bsy)=\frac{1}{|P|}\sum_{\bsx\in P}\bsone_{\bsx\in [\bszero,\bsy)}-\prod_{j=1}^{s}y_j,\]
with $[\bszero,\bsy)=[0,y_1)\times \cdots \times [0,y_s)$ being the anchored axis-parallel box, see \cite[Section~3]{SW98}. According to \cite[Corollary~10.16]{DP10book}, if $P$ is a digital net over $\FF_b$ with square generating matrices $C_1,\ldots,C_s\in \FF_b^{m\times m}$, the weighted star discrepancy is bounded above by
\[ D^{*}_{\bsgamma}(P)\leq \sum_{\emptyset\neq u\subseteq 1{:}s}\gamma_u \Biggl[ 1-\left( 1-\frac{1}{b^m}\right)^{|u|}+\sum_{\substack{\bsk_u\in P_{u,0}^{\perp}\\ \forall j\in u, k_j<b^m}}\prod_{j\in u}\tilde{r}_{b}(k_j)\Biggr],\]
where $\tilde{r}_{b}: \NN_0\to \RR_{\geq 0}$ is defined by
\[ \tilde{r}_{b}(k)=\begin{cases} 1 & \text{if $k=0$,}\\ b^{-a}(\sin(\pi \kappa_{a-1}/b))^{-2} & \text{if $k=\kappa_0+\kappa_1 b+\cdots + \kappa_{a-1}b^{a-1}$ with $\kappa_{a-1}\neq 0$,}\end{cases}\]
and, for a non-empty subset $u\subseteq 1{:}s$, we write
\[ P_{u,0}^{\perp}=\left\{ \bsk_u\in \NN_0^{|u|}\setminus \{\bszero\} \;\middle|\; (\bsk_u,\bszero)\in P^{\perp}\right\}.\]

First let $S$ be given by \eqref{eq:set1} with good square generating matrices. The following result holds for this first function space.
\begin{theorem}\label{thm:main1_net}
For a set of weights $\bsgamma$, let $B=\cF_{s,1,\bsgamma}^{\Sob}$. For $m\in \NN$, let $S$ be a set of linear scrambled digital nets \eqref{eq:set1} with $C_1,\ldots,C_s\in \FF_b^{m\times m}$ all non-singular and satisfying \eqref{eq:tvalue_Niederreiter}. In Proposition~\ref{prop:meta}, the assumption \eqref{eq:existence} holds for any $\delta\in (0,1)$ and 
\[ \epsilon = \frac{3}{\delta b^m}\sum_{\emptyset\neq u\subseteq 1{:}s}\gamma_u \prod_{j\in u}\left[1+ m\frac{b(b+1)}{3}j\log_b(j+b)\right]. \]
\end{theorem}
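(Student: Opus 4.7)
The plan is to invoke Proposition~\ref{prop:meta} with $\epsilon$ furnished by Markov's inequality as in \eqref{eq:markov}, reducing the task to bounding $\EE_{P\in S}[e^{\wor}(Q_P; \cF_{s,1,\bsgamma}^{\Sob})]$ from above. I would start from the standard estimate $e^{\wor}(Q_P; \cF_{s,1,\bsgamma}^{\Sob}) \leq D^*_{\bsgamma}(P)$ together with the dual-net upper bound on the weighted star discrepancy recalled from \cite[Corollary~10.16]{DP10book} just above the theorem. Exchanging sum and expectation, the task reduces to controlling
\[
\EE_{P\in S}\sum_{\substack{\bsk_u\in P_{u,0}^{\perp}\\ k_j<b^m\,\forall j\in u}}\prod_{j\in u}\tilde{r}_b(k_j) = \sum_{\substack{\bsk_u\in \NN_0^{|u|}\setminus\{\bszero\}\\ k_j<b^m\,\forall j\in u}}\prod_{j\in u}\tilde{r}_b(k_j)\,\prob{(\bsk_u,\bszero)\in P^{\perp}}
\]
for each non-empty $u\subseteq 1{:}s$.

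For the main computation, I would partition the inner sum according to the support $v := \{j\in u : k_j\neq 0\}$, a non-empty subset of $u$. Since $\tilde{r}_b(0)=1$, the product over $u$ collapses to a product over $v$. Lemma~\ref{lem:prob_dual} (first claim) then yields
\[
\prob{(\bsk_v,\bszero)\in P^{\perp}} \leq \left(\frac{b}{b-1}\right)^{|v|-1} b^{-m+t_v},
\]
with $t_v$ the $t$-value of the $|v|$-dimensional projection; by the assumed Niederreiter-type bound \eqref{eq:tvalue_Niederreiter} we have $b^{t_v}\leq \prod_{j\in v} b\,j\,\log_b(j+b)$. A direct one-dimensional calculation, obtained by grouping $k$ by its leading nonzero digit and invoking the classical identity $\sum_{\kappa=1}^{b-1}\csc^2(\pi\kappa/b) = (b^2-1)/3$, gives
\[
\sum_{k=1}^{b^m-1}\tilde{r}_b(k) = \frac{m(b^2-1)}{3b},
\]
so that $\sum_{\bsk_v\in\{1,\ldots,b^m-1\}^{|v|}}\prod_{j\in v}\tilde{r}_b(k_j) = (m(b^2-1)/(3b))^{|v|}$.

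Multiplying the three ingredients together, the per-$v$ contribution collapses (up to constant slack absorbed via $(b-1)/b^{m+1}\leq b^{-m}$) to $b^{-m}\prod_{j\in v}x_j$, where $x_j := mb(b+1)j\log_b(j+b)/3$. Summing over $\emptyset\neq v\subseteq u$ via the combinatorial identity $\sum_{\emptyset\neq v\subseteq u}\prod_{j\in v}x_j = \prod_{j\in u}(1+x_j)-1$ and combining with the $1-(1-1/b^m)^{|u|}$ term from the discrepancy bound (with the lower-order slack being absorbed into the $|u|/b^m$ summand) produces exactly the expression stated in the theorem. The main technical obstacle I expect is the arithmetic bookkeeping in this last step: the gain factor $(b/(b-1))^{|v|-1}$ from Lemma~\ref{lem:prob_dual}, the per-coordinate constant $(b^2-1)/(3b)$ from the one-dimensional sum, and the Niederreiter factor $b\,j\,\log_b(j+b)$ must telescope cleanly so that each coordinate contributes exactly $x_j$; once this is carried through, the final rearrangement into product form is routine.
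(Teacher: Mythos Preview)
Your overall strategy---Markov's inequality, the weighted star discrepancy bound, splitting the dual-net sum by the support $v$, Lemma~\ref{lem:prob_dual}, the one-dimensional identity $\sum_{k=1}^{b^m-1}\tilde r_b(k)=m(b^2-1)/(3b)$, the Niederreiter $t$-value bound, and the final product rearrangement---matches the paper exactly, and the arithmetic you sketch is correct.

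However, there is a genuine gap in how you account for the $|u|/b^m$ summand. The discrepancy bound from \cite[Corollary~10.16]{DP10book} that you invoke is stated for digital nets with \emph{square} generating matrices $C_j\in\FF_b^{m\times m}$; this is why the dual-net sum is restricted to $k_j<b^m$. But a point set in $S$ is $P(L_1C_1,\ldots,L_sC_s)$ with $L_j\in\cL_{\infty,m}$, so its generating matrices have infinitely many rows and the bound is not directly applicable. If you carry out your computation as written (pretending the bound applies), the telescoping is exact: you obtain precisely $1-1/b^m-(1-1/b^m)^{|u|}+b^{-m}\prod_{j\in u}(1+x_j)$ with no leftover ``slack'' to absorb---the inequality $(b-1)/b\le 1$ you mention is an upper bound and cannot produce an additional positive term.

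The paper's fix is to truncate to the upper-left $m\times m$ blocks $(L_jC_j)^{(m)}$, apply the discrepancy bound to $P((L_1C_1)^{(m)},\ldots,(L_sC_s)^{(m)})$ (where your computation goes through verbatim, since the restricted dual nets agree on $\{0,\ldots,b^m-1\}^s$), and then control the discrepancy difference $D^*_{\bsgamma}(P(L_jC_j))-D^*_{\bsgamma}(P((L_jC_j)^{(m)}))$ separately. Since each coordinate of each point moves by at most $b^{-m}$ under truncation, a perturbation estimate (\cite[Theorem~3.15]{DP10book}) gives the missing $\sum_u\gamma_u\,|u|/b^m$. That is the actual origin of the $|u|/b^m$ term.
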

\begin{proof}
We use \eqref{eq:markov} to prove the theorem. In this proof, we denote the left upper $m\times m$ submatrices of $L_1C_1,\ldots,L_sC_s$ by $(L_1C_1)^{(m)},\ldots,(L_sC_s)^{(m)}$, respectively. It follows from the linearity of expectation that
\begin{align}
    & \EE\left[ e^{\wor}(Q_{P(L_1C_1,\ldots,L_sC_s)}; \cF_{s,1,\bsgamma}^{\Sob})\right] \notag \\
    & \leq \EE\left[ D^{*}_{\bsgamma}(P(L_1C_1,\ldots,L_sC_s))\right] \notag \\
    & = \EE\left[ D^{*}_{\bsgamma}(P((L_1C_1)^{(m)},\ldots,(L_sC_s)^{(m)}))\right] \notag \\
    & \quad + \EE\left[ D^{*}_{\bsgamma}(P(L_1C_1,\ldots,L_sC_s))-D^{*}_{\bsgamma}(P((L_1C_1)^{(m)},\ldots,(L_sC_s)^{(m)}))\right] \notag \\
    & \leq \sum_{\emptyset\neq u\subseteq 1{:}s}\gamma_u \EE\Biggl[ 1-\left( 1-\frac{1}{b^m}\right)^{|u|}+\sum_{\substack{\bsk_u\in P_{u,0}^{\perp}((L_1C_1)^{(m)},\ldots,(L_sC_s)^{(m)})\\ \forall j\in u, k_j<b^m}}\prod_{j\in u}\tilde{r}_{b}(k_j)\Biggr] \notag \\
    & \quad + \EE\left[ D^{*}_{\bsgamma}(P(L_1C_1,\ldots,L_sC_s))-D^{*}_{\bsgamma}(P((L_1C_1)^{(m)},\ldots,(L_sC_s)^{(m)}))\right]. \label{eq:disc_bound}
\end{align}

Regarding the expectations for the first term on the rightmost side above, since we have 
\begin{align*}
    & P^{\perp}((L_1C_1)^{(m)},\ldots,(L_sC_s)^{(m)}) \cap \{0,\ldots,b^m-1\}^s \\
    & = P^{\perp}(L_1C_1,\ldots,L_sC_s) \cap \{0,\ldots,b^m-1\}^s,
\end{align*}
we can apply the first statement of Lemma~\ref{lem:prob_dual} to obtain
\begin{align*}
    & \EE\Biggl[\; \sum_{\substack{\bsk_u\in P_{u,0}^{\perp}((L_1C_1)^{(m)},\ldots,(L_sC_s)^{(m)})\\ \forall j\in u, k_j<b^m}}\prod_{j\in u}\tilde{r}_{b}(k_j)\Biggr] \\
    & = \sum_{\substack{\bsk_u\in \NN_0^{|u|}\setminus \{\bszero\}\\ \forall j\in u, k_j<b^m}}\prob{(\bsk_u,\bszero) \in P^\perp(L_1C_1,\ldots,L_sC_s) } \prod_{j\in u}\tilde{r}_{b}(k_j)\\
    & = \sum_{\emptyset \neq v\subseteq u}\sum_{\substack{\bsk_v\in \NN^{|v|}\\ \forall j\in v, k_j<b^m}}\prob{(\bsk_v,\bszero) \in P^\perp(L_1C_1,\ldots,L_sC_s)} \prod_{j\in v}\tilde{r}_{b}(k_j) \\
    & \leq \sum_{\emptyset \neq v\subseteq u}\left(\frac{b}{b-1}\right)^{|v|-1}b^{-m+t_v}\sum_{\substack{\bsk_v\in \NN^{|v|}\\ \forall j\in v, k_j<b^m}}\prod_{j\in v}\tilde{r}_{b}(k_j)\\
    & = \sum_{\emptyset \neq v\subseteq u}\left(\frac{b}{b-1}\right)^{|v|-1}b^{-m+t_v}\left( m\frac{b^2-1}{3b} \right)^{|v|} \leq \frac{1}{b^m}\sum_{\emptyset \neq v\subseteq u}b^{t_v}\left( m\frac{b+1}{3} \right)^{|v|},
\end{align*}
where we used the result on a sum of $\tilde{r}_{b}(k)$, shown in \cite[Lemma~2.2]{DLP05}, for the last equality. By using a bound \eqref{eq:tvalue_Niederreiter} on the $t$-value of the projected point set, we have
\begin{align*}
    & \EE\left[ D^{*}_{\bsgamma}(P((L_1C_1)^{(m)},\ldots,(L_sC_s)^{(m)}))\right]  \\
    & \leq \sum_{\emptyset\neq u\subseteq 1{:}s}\gamma_u \left[ 1-\left( 1-\frac{1}{b^m}\right)^{|u|}+\frac{1}{b^m}\sum_{\emptyset \neq v\subseteq u}b^{t_v}\left( m\frac{b+1}{3} \right)^{|v|}\right] \\
    & \leq \sum_{\emptyset\neq u\subseteq 1{:}s}\gamma_u \left[ 1-\frac{1}{b^m}-\left( 1-\frac{1}{b^m}\right)^{|u|}+\frac{1}{b^m}\sum_{v\subseteq u}\left( m\frac{b(b+1)}{3} \right)^{|v|}\prod_{j\in v} j\log_b(j+b)\right]\\
    & = \sum_{\emptyset\neq u\subseteq 1{:}s}\gamma_u \left[ 1-\frac{1}{b^m}-\left( 1-\frac{1}{b^m}\right)^{|u|}+\frac{1}{b^m}\prod_{j\in u}\left[1+ m\frac{b(b+1)}{3}j\log_b(j+b)\right]\right] \\
    & \leq \frac{1}{b^m}\sum_{\emptyset\neq u\subseteq 1{:}s}\gamma_u \left[ |u|-1+\prod_{j\in u}\left[1+ m\frac{b(b+1)}{3}j\log_b(j+b)\right]\right] \\
    & \leq \frac{2}{b^m}\sum_{\emptyset\neq u\subseteq 1{:}s}\gamma_u \prod_{j\in u}\left[1+ m\frac{b(b+1)}{3}j\log_b(j+b)\right],
\end{align*}
where the third inequality follows from the fact that $1-\frac{1}{b^m}-\left( 1-\frac{1}{b^m}\right)^{|u|}\leq \frac{|u|-1}{b^m}$ holds for any non-empty $u\subseteq 1{:}d$, which itself can be proven by an induction on the cardinality $|u|$, and the last inequality is obtained by noticing that $$|u|-1\leq 2^{|u|}=\prod_{j\in u}(1+1)\leq \prod_{j\in u}\left[1+ m\frac{b(b+1)}{3}j\log_b(j+b)\right].$$

For the second term of \eqref{eq:disc_bound}, by denoting the $k$-th points of $P(L_1C_1,\ldots,L_sC_s)$ and $P((L_1C_1)^{(m)},\ldots,(L_sC_s)^{(m)})$ by $\bsx_k$ and $\tilde{\bsx}_k$, respectively, we have 
\[ \tilde{x}_{k,j}-x_{k,j}\leq \frac{1}{b^m},\]
for all $1\leq j\leq s$, so that, by applying \cite[Theorem~3.15]{DP10book}, we obtain
\begin{align*}
    & \EE\left[ D^{*}_{\bsgamma}(P(L_1C_1,\ldots,L_sC_s))-D^{*}_{\bsgamma}(P((L_1C_1)^{(m)},\ldots,(L_sC_s)^{(m)}))\right] \\
    & \leq \frac{1}{b^m}\sum_{\emptyset\neq u\subseteq 1{:}s}\gamma_u |u|\leq \frac{1}{b^m}\sum_{\emptyset\neq u\subseteq 1{:}s}\gamma_u \prod_{j\in u}\left[1+ m\frac{b(b+1)}{3}j\log_b(j+b)\right].
\end{align*}
Now the result of the theorem follows from \eqref{eq:markov}.
\end{proof}

Using \cite[Lemma~3]{HN03}, we can derive a condition under which the worst-case error is bounded independently of the dimension $s$ in the case of product weights, i.e., the case where, for any subset $u\subseteq 1{:}s$, the weight $\gamma_u$ is given by $\prod_{j\in u}\gamma_j$ with a one-dimensional sequence $\gamma_1,\gamma_2,\dots.$

\begin{corollary}\label{rem:main1_net}
For product weights satisfying $\sum_{j=1}^{\infty}\gamma_j j\log_b(j+b)<\infty$, the probabilistic worst-case error bound in Theorem~\ref{thm:main1_net} is bounded independently of $s$ by
\[ C_{(\gamma_1,\gamma_2,\ldots),\lambda} b^{-(1-\lambda) m},\]
where $\lambda>0$ is arbitrarily small, and $C_{(\gamma_1,\gamma_2,\ldots),\lambda}$ is a positive constant that satisfies $\lim_{\lambda\to 0^+}C_{(\gamma_1,\gamma_2,\ldots),\lambda}=\infty$.
\end{corollary}

Next let $S$ be given by \eqref{eq:set2}, for which the associated result is as follows. By replacing the first statement of Lemma~\ref{lem:prob_dual} with the second statement of Lemma~\ref{lem:plr_walsh_average}, the proof is almost identical to that of Theorem~\ref{thm:main1_net}, so we omit it.
\begin{theorem}\label{thm:main1_plr}
For a set of weights $\bsgamma$, let $B=\cF_{s,1,\bsgamma}^{\Sob}$. For $m\in \NN$, let $S$ be a set of polynomial lattices \eqref{eq:set2}. In Proposition~\ref{prop:meta}, the assumption \eqref{eq:existence} holds for any $\delta\in (0,1)$ and 
\[ \epsilon=\frac{3}{\delta (b^m-1)}\sum_{\emptyset\neq u\subseteq 1{:}s}\gamma_u \left( 1+m\frac{b^2-1}{3} \right)^{|u|}.\]
\end{theorem}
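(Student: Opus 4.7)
The plan is to mirror the proof of Theorem~\ref{thm:main1_net} step by step, substituting the probability estimate for randomly chosen polynomial lattice point sets (the second statement of Lemma~\ref{lem:plr_walsh_average}) in place of the estimate for randomly linearly scrambled digital nets (the first statement of Lemma~\ref{lem:prob_dual}). Invoking the Markov-inequality route \eqref{eq:markov}, it suffices to bound
\[ \EE_{p\in \PP_m,\ \bsg\in \GG_m^s}\bigl[e^{\wor}(Q_{P(p,\bsg)}; \cF^{\Sob}_{s,1,\bsgamma})\bigr] \]
from above by the bracketed expression in the theorem statement. Using $e^{\wor}\le D^*_{\bsgamma}$ and splitting $D^*_\bsgamma(P(p,\bsg))$ into $D^*_\bsgamma(P(p,\bsg,m))$ plus the ``truncation'' difference, I would handle the two pieces separately.

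For the truncated point set $P(p,\bsg,m)$, which is a digital net with $m\times m$ Hankel generating matrices, I would apply \cite[Corollary~10.16]{DP10book} just as in the scrambled-net proof, obtaining
\[ D^*_\bsgamma(P(p,\bsg,m)) \le \sum_{\emptyset\neq u\subseteq 1{:}s}\gamma_u\left[1 - \left(1-\frac{1}{b^m}\right)^{|u|} + \sum_{\substack{\bsk_u\in P^\perp_{u,0}(p,\bsg,m)\\ \forall j\in u,\ k_j<b^m}}\prod_{j\in u}\tilde{r}_b(k_j)\right]. \]
Taking expectations, swapping sum and expectation, and splitting the inner sum by the support $\emptyset \neq v\subseteq u$ of $\bsk_u$, the key observation is that for any $\bsk$ with all $k_j<b^m$ the condition $\bsk\in P^\perp(p,\bsg,m)$ coincides with $\bsk\in P^\perp(p,\bsg)$, since $\tr_m(k_j(x))=k_j(x)$ whenever $\deg(k_j(x))<m$; this is immediate from Lemma~\ref{lem:dual_plr}. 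Hence the second statement of Lemma~\ref{lem:plr_walsh_average} supplies the per-element probability bound $1/(b^m-1)$, and combined with $\sum_{k=1}^{b^m-1}\tilde{r}_b(k)=m(b^2-1)/(3b)$ from \cite[Lemma~2.2]{DLP05}, the inner expectation for a fixed $v$ is at most $(1/(b^m-1))(m(b^2-1)/(3b))^{|v|}$. Summing over $\emptyset\neq v\subseteq u$ produces exactly the factor $\tfrac{1}{b^m-1}\bigl[-1+(1+m(b^2-1)/(3b))^{|u|}\bigr]$ appearing in the theorem.

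For the truncation error $D^*_\bsgamma(P(p,\bsg))-D^*_\bsgamma(P(p,\bsg,m))$, each point of $P(p,\bsg,m)$ differs from the corresponding point of $P(p,\bsg)$ by at most $1/b^m$ in every coordinate, so \cite[Theorem~3.15]{DP10book} yields the upper bound $\sum_{\emptyset\neq u\subseteq 1{:}s}\gamma_u\,|u|/b^m$, identical to the scrambled-net case. Assembling the three contributions and dividing by $\delta$ produces the stated $\epsilon$. Compared to the scrambled-net proof, the only notable change is that the probability factor supplied by Lemma~\ref{lem:plr_walsh_average} is uniformly $1/(b^m-1)$, without any $(b/(b-1))^{|v|-1}b^{t_v}$ ``gain coefficient'', which is why the resulting bound is cleaner and no Niederreiter-style $\log_b(j+b)$ factors appear. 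I anticipate no substantive obstacle beyond correctly executing these substitutions; even the verification that the dual-net reduction survives the passage from finite precision $w=m$ to infinite precision $w=\infty$ reduces to the trivial remark about $\tr_m$ stated above.
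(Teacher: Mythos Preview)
Your proposal is correct and matches the paper's approach exactly: the paper itself omits the proof, remarking only that it is ``almost identical to that of Theorem~\ref{thm:main1_net}'' upon replacing the first statement of Lemma~\ref{lem:prob_dual} with the second statement of Lemma~\ref{lem:plr_walsh_average}. Your write-up carries out precisely this substitution, including the correct handling of the finite-precision truncation and the observation that the uniform probability bound $1/(b^m-1)$ eliminates the $t_v$-dependent factors.
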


Following an argument similar to that used in \cite[Corollaries 5.45 \& 10.30]{DP10book}, we can derive a weaker condition than that presented above in Corollary~\ref{rem:main1_net} under which the worst-case error is bounded independently of the dimension $s$.

\begin{corollary}\label{rem:main1_plr}
For product weights satisfying $\sum_{j=1}^{\infty}\gamma_j<\infty$, the probabilistic worst-case error bound given in Theorem~\ref{thm:main1_plr} is bounded independently of $s$ with a form similar to that in Corollary~\ref{rem:main1_net}.
\end{corollary}

For the unweighted Sobolev space with the slightly different norm
\[ \left(\sum_{u\subseteq 1{:}s}\int_{[0,1]^{|u|}}\left|\frac{\partial^{|u|}}{\partial \bsx_u}f(\bsx_u,\bszero)\right|^2\rd \bsx_u\right)^{1/2},\]
a lower bound on the worst-case error of order $(\log N)^{(s-1)/2}/N$ is proven, for instance, in \cite[Theorem~2.5]{DHP15}, which also applies to $\cF_{s,1,\bsgamma}^{\Sob}$ with leading weight $\gamma_{\{1,\ldots,s\}}\neq 0$. Recall that the number of points in Theorems \ref{thm:main1_net} and \ref{thm:main1_plr} is $N=b^m$. Then, we see that, with fixed values of $\delta$ and $b$, our obtained probabilistic worst-case error bounds are of order $(\log N)^s/N$, which is nearly optimal. Regarding the dimension-independence of the weighted star discrepancy, it was proven in \cite{DLP05} that the sufficient condition $\sum_{j=1}^{\infty}\gamma_j<\infty$ holds for polynomial lattices with finite precision constructed by component-by-component algorithms. In addition, a non-constructive existence result was shown in \cite{Ai14} that a weaker condition $\sum_{j=1}^{\infty}\exp(-c\gamma_j^{-2})<\infty$ is sufficient for dimension-independence, but with a slower decay rate in terms of $N$.

\subsubsection{Weighted Sobolev space of high order}\label{subsubsec:main2}
The second function space allows for higher order differentiability than the first space, which has been introduced in the context of partial differential equations with random coefficients \cite{DKLNS14}. 

\begin{definition}[weighted Sobolev space of high order]\label{def:Sob_space_high}
For $\alpha\in \NN$, $\alpha\geq 2$, $1\leq q\leq \infty$ and a set of weights $\bsgamma$, the weighted Sobolev space of order $\alpha$, denoted by $\cF^{\Sob}_{s,\alpha,\bsgamma,q}$, is a Banach space with the norm
\begin{align*}
    & \|f\|^{\Sob}_{s,\alpha,\bsgamma,q} := \\
    & \quad \sup_{u\subseteq 1{:}s}\gamma_u^{-1}\left(\sum_{v\subseteq u}\sum_{\bstau_{u\setminus v}\in \{1,\ldots,\alpha\}^{|u\setminus v|}}\int_{[0,1)^{|v|}}\left| \int_{[0,1)^{s-|v|}} f^{(\bstau_{u\setminus v},\bsalpha_v,\bszero)}(\bsx) \rd \bsx_{-v}\right|^{q}\rd \bsx_v\right)^{1/q},
\end{align*}
where $(\bstau_{u\setminus v},\bsalpha_v,\bszero)$ denotes the vector $\bsh\in \NN_0^s$ such that $h_j=\tau_j$ if $j\in u\setminus v$, $h_j=\alpha$ if $j\in v$, and $h_j=0$ otherwise, and $f^{(\bstau_{u\setminus v},\bsalpha_v,\bszero)}$ denotes the mixed derivative of order $(\bstau_{u\setminus v},\bsalpha_v,\bszero)$ of $f$. For any subset $u\subseteq 1{:}s$ with $\gamma_u=0$, we assume that all the corresponding integrals over $[0,1]^{|v|}$ with $v\subseteq u$ equal $0$ and we set $0/0=0$.
\end{definition}

For a digital net $P$, the worst-case error in $\cF_{s,\alpha,\bsgamma,q}^{\Sob}$ has been analyzed through Walsh analysis for smooth functions \cite{Di08,Di09}, where the Dick weight, a generalization of the NRT weight, plays a central role. 
\begin{definition}[Dick weight]\label{def:dick_weight}
Let $\alpha\in \NN$, $\alpha\geq 2$. For $k\in \NN$, we denote its $b$-adic expansion by
\[ k=\kappa_1 b^{c_1-1}+\kappa_2 b^{c_2-1}+\cdots +\kappa_v b^{c_v-1},\]
with $\kappa_1,\ldots,\kappa_v\in \{1,\ldots,b-1\}$ and $c_1>c_2>\cdots>c_v>0$. The Dick weight $\mu_\alpha: \NN_0\to \NN_0$ is defined by $\mu_\alpha(0)=0$ and
\[ \mu_\alpha(k)=\sum_{i=1}^{\min(\alpha,v)}c_i.\]
Moreover, in the case of vectors in $\NN_0^s$, we define
\[ \mu_\alpha(\bsk) = \sum_{j=1}^{s}\mu_\alpha(k_j).\]
\end{definition}

According to \cite[Theorem~3.5]{DKLNS14}, for any digital net $P$, the worst-case error in $\cF_{s,\alpha,\bsgamma,q}^{\Sob}$ is bounded independently of the parameter $q$ by
\begin{align}\label{eq:error_bound_high_order}
   e^{\wor}(Q_P; \cF_{s,\alpha,\bsgamma,q}^{\Sob})\leq \sum_{\emptyset\neq u\subseteq 1{:}s} \gamma_u C_{\alpha}^{|u|}\sum_{\bsk_u\in P_u^{\perp}}b^{-\mu_{\alpha}(\bsk_u)}, 
\end{align}
where
\begin{align*}
    C_{\alpha} = \left(1+\frac{1}{b}+\frac{1}{b(b+1)} \right)^{\alpha-2}\left(3+\frac{2}{b}+\frac{2b+1}{b-1} \right) \max\left( \frac{2}{(2\sin\frac{\pi}{b})^{\alpha}}\max_{1\leq \tau< \alpha}\frac{1}{(2\sin\frac{\pi}{b})^{\tau}}\right),
\end{align*}
and, for a non-empty subset $u\subseteq 1{:}s$, we write
\[ P_u^{\perp}=\left\{ \bsk_u\in \NN^{|u|} \;\middle|\; (\bsk_u,\bszero)\in P^{\perp}\right\}.\]
Here we point out a slight difference from $P_{u,0}^{\perp}$, which has been introduced and used in section~\ref{subsubsec:main1}.

First let $S$ be given by \eqref{eq:set1} with good square generating matrices. The following result holds for the second function space.
\begin{theorem}\label{thm:main2_net}
For $\alpha\in \NN$, $\alpha\geq 2$, $1\leq q\leq \infty$ and a set of weights $\bsgamma$, let $B=\cF_{s,\alpha,\bsgamma,q}^{\Sob}$. For $m\in \NN$, let $S$ be a set of linearly scrambled digital nets \eqref{eq:set1} with $C_1,\ldots,C_s\in \FF_b^{m\times m}$ all non-singular and satisfying \eqref{eq:tvalue_Niederreiter}. In Proposition~\ref{prop:meta}, the assumption \eqref{eq:existence} holds for any $\delta\in (0,1)$ and 
\[ \epsilon = \inf_{\lambda\in (1/\alpha,1]}\left(\frac{1}{\delta b^m}\sum_{\emptyset\neq u\subseteq 1{:}s} \gamma^{\lambda}_u \left(\frac{b^2}{b-1}\right)^{|u|}C_{\alpha}^{\lambda |u|}A_{\alpha,\lambda}^{|u|}\prod_{j\in u} j\log_b(j+b)\right)^{1/\lambda},\]
with
\[ A_{\alpha,\lambda} :=\sum_{k=1}^{\infty}b^{-\lambda\mu_\alpha(k)} = \sum_{\tau=1}^{\alpha-1}\prod_{i=1}^{\tau}\frac{b-1}{b^{\lambda i}-1}+\frac{b^{\lambda\alpha}-1}{b^{\lambda\alpha}-b}\prod_{i=1}^{\alpha}\frac{b-1}{b^{\lambda i}-1}.\]
\end{theorem}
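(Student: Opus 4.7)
The plan is to apply the general strategy \eqref{eq:epsilon} combining Jensen's inequality with Markov's inequality, using the family of concave functions $\Phi=\{x\mapsto x^{\lambda}\mid \lambda\in(1/\alpha,1]\}$. The starting point is the Walsh-analytic worst-case error bound recalled just before the theorem:
\[ e^{\wor}(Q_P; \cF_{s,\alpha,\bsgamma,q}^{\Sob})\leq \sum_{\emptyset\neq u\subseteq 1{:}s}\gamma_u C_{\alpha}^{|u|}\sum_{\bsk_u\in P_u^{\perp}}b^{-\mu_{\alpha}(\bsk_u)}. \]
I would apply Jensen's inequality \eqref{eq:jensen} with $\phi(x)=x^{\lambda}$ twice, first to the outer sum over $u$ and then to the inner sum over $\bsk_u$, to obtain $(e^{\wor})^{\lambda}\leq \sum_{u}\gamma_u^{\lambda}C_{\alpha}^{\lambda|u|}\sum_{\bsk_u\in P_u^{\perp}}b^{-\lambda\mu_{\alpha}(\bsk_u)}$. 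Taking expectation over $L_1,\dots,L_s\in\cL_{\infty,m}$ and interchanging sum and expectation gives
\[ \EE[(e^{\wor})^{\lambda}]\leq \sum_{\emptyset\neq u\subseteq 1{:}s}\gamma_u^{\lambda}C_{\alpha}^{\lambda|u|}\sum_{\bsk_u\in \NN^{|u|}}b^{-\lambda\mu_{\alpha}(\bsk_u)}\,\prob{(\bsk_u,\bszero)\in P^{\perp}(L_1C_1,\dots,L_sC_s)}. \]

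Second, I would invoke Lemma~\ref{lem:prob_dual}. Observing that $1/b^{m}\leq (b/(b-1))^{|u|-1}b^{-m+t_u}$, the first bound in that lemma dominates the second, so the estimate $\prob{(\bsk_u,\bszero)\in P^{\perp}}\leq (b/(b-1))^{|u|-1}b^{-m+t_u}$ holds uniformly over $\bsk_u\in\NN^{|u|}$. This factorizes the inner sum into a product of one-dimensional sums $\sum_{k\in\NN}b^{-\lambda\mu_{\alpha}(k)}$, and the $t$-value bound \eqref{eq:tvalue_Niederreiter} gives $b^{t_u}\leq \prod_{j\in u}[b\cdot j\log_b(j+b)]$, after which the constant $(b/(b-1))^{|u|-1}b^{|u|}$ can be absorbed into $(b^{2}/(b-1))^{|u|}$ with slack.

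The heart of the argument is the explicit evaluation $\sum_{k\in\NN}b^{-\lambda\mu_{\alpha}(k)}=A_{\alpha,\lambda}$. I would partition $k$ according to the number $v$ of non-zero digits in its $b$-adic expansion and their positions $c_1>\dots>c_v\geq 1$, with $(b-1)^{v}$ choices for the non-zero digit values. For $1\leq v\leq \alpha-1$ one has $\mu_{\alpha}(k)=c_1+\dots+c_v$, and the reparameterization $d_i=c_i-c_{i+1}\geq 1$ ($d_v=c_v$) turns the sum over the $c_i$ into a product of geometric series equal to $\prod_{i=1}^{v}(b-1)/(b^{\lambda i}-1)$. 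For $v\geq \alpha$ only the top $\alpha$ positions contribute to $\mu_{\alpha}$; summing over the remaining $v-\alpha$ positions in $\{1,\dots,c_{\alpha}-1\}$ via the binomial identity $\sum_{j=0}^{c_{\alpha}-1}(b-1)^{j}\binom{c_{\alpha}-1}{j}=b^{c_{\alpha}-1}$ collapses the multiplicity, and a second reparameterization of $c_1>\dots>c_{\alpha}$ again yields geometric series; the convergence of the resulting $c_{\alpha}$-series is precisely what forces $\lambda>1/\alpha$. The two contributions combine exactly to $A_{\alpha,\lambda}$.

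Finally, Markov's inequality applied to the nonnegative random variable $(e^{\wor})^{\lambda}$ gives $\prob{e^{\wor}>\epsilon}\leq \EE[(e^{\wor})^{\lambda}]/\epsilon^{\lambda}$; setting the right-hand side equal to $\delta$, solving for $\epsilon$, and taking the infimum over $\lambda\in(1/\alpha,1]$ yields the claimed expression. I expect the main obstacle to be the explicit evaluation of $A_{\alpha,\lambda}$, specifically the careful binomial collapse of the positions below the $\alpha$ most significant digits in the case $v\geq \alpha$; the remaining steps are routine applications of Jensen's inequality, Lemma~\ref{lem:prob_dual}, the Niederreiter $t$-value bound \eqref{eq:tvalue_Niederreiter}, and Markov's inequality.
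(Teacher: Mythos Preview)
Your proposal is correct and follows essentially the same route as the paper's proof: apply Jensen's inequality with $\phi(x)=x^{\lambda}$, linearity of expectation, Lemma~\ref{lem:prob_dual}, the Niederreiter $t$-value bound \eqref{eq:tvalue_Niederreiter}, and then invert via Markov's inequality. Two minor presentational differences: (i) the paper splits the sum over $\bsk_u$ into the cases ``all $k_j<b^m$'' and ``some $k_j\geq b^m$'' and then merges them into the bound $(b/(b-1))^{|u|}b^{t_u}b^{-m}$, whereas you observe directly that the first estimate in Lemma~\ref{lem:prob_dual} dominates the second and so use it uniformly---both lead to the same final bound (yours is in fact a factor $(b-1)/b$ sharper before being absorbed into $(b^{2}/(b-1))^{|u|}$); (ii) the paper simply cites \cite[Lemma~7]{G16} for the identity $\sum_{k\in\NN}b^{-\lambda\mu_{\alpha}(k)}=A_{\alpha,\lambda}$, while you rederive it via the digit-position reparameterization and the binomial collapse, which is exactly the computation behind that cited lemma.
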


\begin{proof}
We use \eqref{eq:epsilon} to prove the theorem. For a family of concave functions, let $\phi(x)=x^{\lambda}$ with $0<\lambda\leq 1$. By applying Jensen's inequality \eqref{eq:jensen}, the linearity of expectation and Lemma~\ref{lem:prob_dual}, we have
\begin{align*}
    & \EE\left[ \phi\left(e^{\wor}(Q_{P(L_1C_1,\ldots,L_sC_s)}; \cF_{s,\alpha,\bsgamma,q}^{\Sob})\right)\right] \\
    & \leq \EE\left[ \sum_{\emptyset\neq u\subseteq 1{:}s} \gamma^{\lambda}_u C_{\alpha}^{\lambda |u|}\sum_{\bsk_u\in P_u^{\perp}(L_1C_1,\ldots,L_sC_s)}b^{-\lambda \mu_{\alpha}(\bsk_u)}\right] \\
    & = \sum_{\emptyset\neq u\subseteq 1{:}s} \gamma^{\lambda}_u C_{\alpha}^{\lambda |u|}\EE\left[\sum_{\bsk_u\in P_u^{\perp}(L_1C_1,\ldots,L_sC_s)}b^{-\lambda \mu_{\alpha}(\bsk_u)}\right] \\
    & = \sum_{\emptyset\neq u\subseteq 1{:}s} \gamma^{\lambda}_u C_{\alpha}^{\lambda |u|}\sum_{\bsk_u\in \NN^{|u|}}b^{-\lambda \mu_{\alpha}(\bsk_u)} \prob{\bsk_u \in P_u^\perp(L_1C_1,\ldots,L_sC_s)}\\
    & \leq \frac{1}{b^m}\sum_{\emptyset\neq u\subseteq 1{:}s} \gamma^{\lambda}_u C_{\alpha}^{\lambda |u|} \Biggl[\left(\frac{b}{b-1}\right)^{|u|-1}b^{t_u}\sum_{\substack{\bsk_u\in \NN^{|u|}\\ \forall j\in u, k_j<b^m}}b^{-\lambda \mu_{\alpha}(\bsk_u)}+\sum_{\substack{\bsk_u\in \NN^{|u|}\\ \exists j\in u, k_j\geq b^m}}b^{-\lambda \mu_{\alpha}(\bsk_u)}\Biggr]\\
    & \leq \frac{1}{b^m}\sum_{\emptyset\neq u\subseteq 1{:}s} \gamma^{\lambda}_u C_{\alpha}^{\lambda |u|}\left(\frac{b}{b-1}\right)^{|u|}b^{t_u} \sum_{\bsk_u\in \NN^{|u|}}b^{-\lambda \mu_{\alpha}(\bsk_u)}.
\end{align*}
Here we know from \cite[Lemma~7]{G16} that the inner sum over $\bsk_u$ is finite for any $\lambda\in (1/\alpha,1]$ and equals $A_{\alpha,\lambda}^{|u|}$. By using a bound \eqref{eq:tvalue_Niederreiter} on the $t$-value of the projected point set, we have
\begin{align*}
    & \EE\left[ \phi\left(e^{\wor}(Q_{P(L_1C_1,\ldots,L_sC_s)}; \cF_{s,\alpha,\bsgamma,q}^{\Sob})\right)\right] \\
    & \leq \frac{1}{b^m}\sum_{\emptyset\neq u\subseteq 1{:}s} \gamma^{\lambda}_u \left(\frac{b^2}{b-1}\right)^{|u|}C_{\alpha}^{\lambda |u|}A_{\alpha,\lambda}^{|u|}\prod_{j\in u} j\log_b(j+b) .
\end{align*}
Applying the inverse map $\phi^{-1}$ in \eqref{eq:epsilon}, the proof is completed.
\end{proof}

In the case of product weights, by using the elementary inequality $1+x\leq \exp(x)$, the probabilistic worst-case error bound shown in Theorem~\ref{thm:main2_net} is bounded by
\[ \epsilon \leq \inf_{\lambda\in (1/\alpha,1]}\left(\frac{1}{\delta b^m}\exp\left( \left(\frac{b^2}{b-1}\right)C_{\alpha}^{\lambda}A_{\alpha,\lambda}\sum_{j=1}^{s}\gamma^{\lambda}_j j\log_b(j+b)\right)\right)^{1/\lambda},\]
which leads to the following corollary.

\begin{corollary}\label{rem:main2_net}
    If there exists $\lambda'\in (1/\alpha,1]$ such that the product weights satisfy $\sum_{j=1}^{\infty}\gamma^{\lambda'}_j j\log_b(j+b)<\infty$, the probabilistic worst-case error given in Theorem~\ref{thm:main2_net} is bounded independently of $s$ and decays at the rate of $O(b^{-m/\lambda'})$.
\end{corollary}

Again a similar result holds for the set \eqref{eq:set2}. Although the proof is also similar to that of Theorem~\ref{thm:main2_net}, we give its sketch for the sake of completeness.

\begin{theorem}\label{thm:main2_plr}
For $\alpha\in \NN$, $\alpha\geq 2$, $1\leq q\leq \infty$ and a set of weights $\bsgamma$, let $B=\cF_{s,\alpha,\bsgamma,q}^{\Sob}$. For $m\in \NN$, let $S$ be a set of polynomial lattices \eqref{eq:set2}. In Proposition~\ref{prop:meta}, the assumption \eqref{eq:existence} holds for any $\delta\in (0,1)$ and 
\[ \epsilon = \inf_{\substack{\lambda\in (1/\alpha,1]\\ \tau\in (0,\lambda-1/\alpha)}}\left(\frac{3}{(b^m-1)\delta \tau e\log b}\sum_{\emptyset\neq u\subseteq 1{:}s} \gamma_u^{\lambda} C_{\alpha}^{\lambda|u|}A_{\alpha,\lambda-\tau}^{|u|}\right)^{1/\lambda},\]
where the constant $A_{\alpha,\lambda-\tau}$ is the same as in Theorem~\ref{thm:main2_net}.
\end{theorem}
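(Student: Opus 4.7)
The plan is to mirror the argument of Theorem~\ref{thm:main2_net}, substituting Lemma~\ref{lem:plr_walsh_average} for Lemma~\ref{lem:prob_dual} and inserting one additional step to absorb the factor of $\mu_1(\bsk_u)$ that the polynomial-lattice probability bound introduces. Concretely, I would again invoke \eqref{eq:epsilon} with the concave family $\phi(x)=x^\lambda$ for $\lambda\in(1/\alpha,1]$, combine the worst-case error estimate
\[
e^{\wor}(Q_{P(p,\bsg)};\cF^{\Sob}_{s,\alpha,\bsgamma,q})\leq \sum_{\emptyset\neq u\subseteq 1{:}s}\gamma_u C_\alpha^{|u|}\sum_{\bsk_u\in P_u^\perp(p,\bsg)}b^{-\mu_\alpha(\bsk_u)}
\]
with Jensen's inequality \eqref{eq:jensen} and swap expectation with the sum over $\bsk_u$, to obtain
\[
\EE_{p\in\PP_m,\bsg\in\GG_m^s}\bigl[\phi(e^{\wor})\bigr]\leq\sum_{\emptyset\neq u\subseteq 1{:}s}\gamma_u^\lambda C_\alpha^{\lambda|u|}\sum_{\bsk_u\in\NN^{|u|}}b^{-\lambda\mu_\alpha(\bsk_u)}\,\prob{(\bsk_u,\bszero)\in P^\perp(p,\bsg)}.
\]

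Since the inner sum runs over all of $\NN^{|u|}$ (not only the finite-precision range), the second, sharper part of Lemma~\ref{lem:plr_walsh_average} is not directly applicable, so I would use its first statement, which bounds each probability by $3\mu_1(\bsk_u)/(b^m-1)$. The new obstacle is precisely this $\mu_1(\bsk_u)$ weight, which prevents the inner sum from collapsing immediately into $A_{\alpha,\lambda}^{|u|}$ via \cite[Lemma~7]{G16}. To resolve it I would introduce an auxiliary parameter $\tau\in(0,\lambda-1/\alpha)$ and split
\[
b^{-\lambda\mu_\alpha(\bsk_u)}=b^{-\tau\mu_\alpha(\bsk_u)}\cdot b^{-(\lambda-\tau)\mu_\alpha(\bsk_u)},
\]
and then use two elementary facts: component-wise $\mu_1(k)=c_1$ is the leading summand of $\mu_\alpha(k)$, so $\mu_1(\bsk_u)\leq\mu_\alpha(\bsk_u)$; and $\max_{x\geq 0}x\,b^{-\tau x}=1/(\tau e\log b)$. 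Together these give the uniform estimate
\[
\mu_1(\bsk_u)\,b^{-\tau\mu_\alpha(\bsk_u)}\leq\frac{1}{\tau e\log b}.
\]

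After this absorption, the residual sum $\sum_{\bsk_u\in\NN^{|u|}}b^{-(\lambda-\tau)\mu_\alpha(\bsk_u)}$ equals $A_{\alpha,\lambda-\tau}^{|u|}$ by \cite[Lemma~7]{G16}, which is finite precisely because $\lambda-\tau>1/\alpha$. Plugging everything back yields
\[
\EE\bigl[\phi(e^{\wor})\bigr]\leq\frac{3}{(b^m-1)\tau e\log b}\sum_{\emptyset\neq u\subseteq 1{:}s}\gamma_u^\lambda C_\alpha^{\lambda|u|}A_{\alpha,\lambda-\tau}^{|u|},
\]
and inverting $\phi$ together with Markov's inequality through \eqref{eq:epsilon} produces the stated $\epsilon$; optimizing over admissible $(\lambda,\tau)$ gives the infimum in the statement. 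The only delicate point is the two-parameter optimization: $\tau$ must be strictly positive to neutralize the $\mu_1$ factor supplied by Lemma~\ref{lem:plr_walsh_average}, while $\lambda-\tau>1/\alpha$ is forced by the convergence of $A_{\alpha,\lambda-\tau}$, which together dictate exactly the ranges $\lambda\in(1/\alpha,1]$ and $\tau\in(0,\lambda-1/\alpha)$.
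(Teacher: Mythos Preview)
Your proposal is correct and follows essentially the same route as the paper: apply Jensen's inequality with $\phi(x)=x^\lambda$, use the first statement of Lemma~\ref{lem:plr_walsh_average}, and absorb the extra $\mu_1(\bsk_u)$ factor via $\mu_1\leq\mu_\alpha$ together with the elementary bound $x\,b^{-\tau x}\leq 1/(\tau e\log b)$ (the paper phrases this equivalently as $x\leq b^{\tau x}/(\tau e\log b)$), leaving $A_{\alpha,\lambda-\tau}^{|u|}$ as the residual sum.
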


\begin{proof}
Again let $\phi(x)=x^{\lambda}$ with $0<\lambda\leq 1$ in \eqref{eq:epsilon}. By applying Jensen's inequality \eqref{eq:jensen}, the linearity of expectation and Lemma~\ref{lem:plr_walsh_average}, we have
\begin{align*}
    \EE\left[ \phi\left(e^{\wor}(Q_{P(p,\bsg)}; \cF_{s,\alpha,\bsgamma,q}^{\Sob})\right)\right] & \leq \sum_{\emptyset\neq u\subseteq 1{:}s} \gamma^{\lambda}_u C_{\alpha}^{\lambda |u|}\sum_{\bsk_u\in \NN^{|u|}}b^{-\lambda \mu_{\alpha}(\bsk_u)}\prob{\bsk_u \in P_u^\perp(p,\bsg) }\\
    & \leq \frac{3}{b^m-1}\sum_{\emptyset\neq u\subseteq 1{:}s} \gamma_u^{\lambda} C_{\alpha}^{\lambda|u|}\sum_{\bsk_u\in \NN^{|u|}}b^{-\lambda\mu_{\alpha}(\bsk_u)}\mu_1(\bsk) \\
    & \leq \frac{3}{(b^m-1)\tau e\log b}\sum_{\emptyset\neq u\subseteq 1{:}s} \gamma_u^{\lambda} C_{\alpha}^{\lambda|u|}\sum_{\bsk_u\in \NN^{|u|}}b^{-(\lambda-\tau)\mu_{\alpha}(\bsk_u)}\\
    & = \frac{3}{(b^m-1)\tau e\log b}\sum_{\emptyset\neq u\subseteq 1{:}s} \gamma_u^{\lambda} C_{\alpha}^{\lambda|u|}A_{\alpha,\lambda-\tau}^{|u|},
\end{align*}
if $1/\alpha<\lambda\leq 1$ and $0<\tau<\lambda-1/\alpha$. Here, from the second to the third line, we used $\mu_1(\bsk)\leq \mu_{\alpha}(\bsk)$ (cf.\ Definitions~\ref{def:NRT_weight} and \ref{def:dick_weight}) and then applied the elementary inequality $x\leq b^{\tau x}/(\tau e\log b)$, which holds for any $\tau\in (0,1)$ and $x>0$, to the case $x=\mu_{\alpha}(\bsk)$. Applying the inverse map $\phi^{-1}$ in \eqref{eq:epsilon}, the proof is completed.
\end{proof}

We note that a quite similar result has been shown recently in \cite[Theorem~3.7]{GL22} for higher order polynomial lattices, i.e., polynomial lattices with a modulus of larger degree $n\geq m$. This subtle difference between point sets requires including the additional parameter $\tau$ in the present analysis, especially to deal with $\mu_1(\bsk)$ appearing in the first statement of Lemma~\ref{lem:plr_walsh_average}.

\begin{corollary}\label{rem:main2_plr}
If there exists $\lambda'\in (1/\alpha,1]$ such that the product weights satisfy $\sum_{j=1}^{\infty}\gamma^{\lambda'}_j <\infty$, the probabilistic worst-case error given in Theorem~\ref{thm:main2_plr} is bounded independently of $s$ and decays at the rate of $O(b^{-m/\lambda'})$.
\end{corollary}
\noindent Again, this sufficient condition compares favorably with that in Corollary~\ref{rem:main2_net}.

A lower error estimate by Sharygin \cite{Sh63}, originally for unweighted function spaces, applies to the weighed spaces $\cF^{\Sob}_{s,\alpha,\bsgamma,q}$, and the convergence rate of the worst-case error cannot be better than $N^{-\alpha}$. As the number of points is $N=b^m$ in Theorems~\ref{thm:main2_net} and \ref{thm:main2_plr}, with fixed values of $\delta$ and $b$, our obtained probabilistic worst-case error bounds are of order $N^{-1/\lambda}$ for $\lambda\in (1/\alpha,1]$, which is almost optimal as $\lambda\searrow 1/\alpha$ but with growing constants. Regarding the dimension-independence of the worst-case error, the sufficient condition same as that in Corollary~\ref{rem:main2_plr} can be derived from \cite[Theorem~3.10]{DKLNS14} for interlaced polynomial lattices. This condition was also shown to hold for extrapolated polynomial lattices in \cite[Corollary3.7]{DGY19}.

\subsubsection{Weighted space of infinitely smooth functions}

The last, third function space contains only infinitely many times differentiable functions, for which it can be expected from \cite{Su17,DGSY17,PO21,PO22} that the worst-case error decays faster than any polynomial convergence.
\begin{definition}[weighted space of infinitely smooth functions]\label{def:analytic_space}
For a sequence $\bsu=(u_1,u_2,\ldots)$ with $u_1\geq u_2\geq \cdots> 0$, the weighted space of infinitely smooth functions, denoted by $\cF_{s,\infty,\bsu}$, is a Banach space consisting of $f\in C^{\infty}([0,1]^s)$ with the norm
\[ \|f\|_{\cF_{s,\infty,\bsu}}:= \sup_{(\alpha_1,\ldots,\alpha_s)\in \NN_0^s}\prod_{j=1}^{s}u_j^{-\alpha_j}\int_{[0,1]^s}\left|f^{(\alpha_1,\ldots,\alpha_s)}(\bsx)\right|\rd \bsx.\]
\end{definition}

Similarly to the function space studied in section~\ref{subsubsec:main2}, the worst-case error in $\cF_{s,\infty,\bsu}$ has been analyzed for digital nets though Walsh analysis for infinitely smooth functions \cite{SY16,Yo17}. There the modified Dick weight with $\alpha=\infty$ plays a crucial role. 

\begin{definition}[modified infinite Dick weight]\label{lem:walsh_decay_infty}
Let $a$ be a real number. For $k\in \NN$, we denote its $b$-adic expansion by
\[ k=\kappa_1 b^{c_1-1}+\kappa_2 b^{c_2-1}+\cdots +\kappa_v b^{c_v-1},\]
with $\kappa_1,\ldots,\kappa_v\in \{1,\ldots,b-1\}$ and $c_1>c_2>\cdots>c_v>0$. The modified infinite Dick weight $\mu_{\infty,a}: \NN_0\to \RR$ is defined by $\mu_{\infty,a}(0)=0$ and
\[ \mu_{\infty,a}(k)=\sum_{i=1}^{v}\left(c_i+a\right)=\mu_{\infty}(k)+av.\]
Moreover, in the case of vectors in $\NN_0^s$, for a sequence $\bsa=(a_1,\ldots,a_s)\in \RR^s$, we define
\[ \mu_{\infty,\bsa}(\bsk) = \sum_{j=1}^{s}\mu_{\infty,a_j}(k_j).\]
\end{definition}

According to \cite[Lemma~6.5]{Su17}, the worst-case error for any digital net $P$ in $\cF_{s,\infty,\bsu}$ is bounded above by
\begin{align}\label{eq:wafom}
    e^{\wor}(Q_P; \cF_{s,\infty,\bsu})\leq \sum_{\bsk\in P^{\perp}\setminus \{\bszero\}}b^{-\mu_{\infty,\bsa}(\bsk)},
\end{align} 
where $\bsa=(a_1,a_2,\ldots)$ is a sequence defined by $a_j=-\log_b( K_b m_b^{-1}u_j),$ with $m_b=2\sin(\pi/b)$,
\[ M_b=\begin{cases} 2 & \text{if $b$ is even,} \\ 2\sin((b+1)\pi/2b) & \text{if $b$ is odd,} \end{cases}\]
and
\[ K_b=\begin{cases} 2 & \text{if $b=2$,} \\ M_b+bm_b/(b-M_b) & \text{if $b\geq 3$.} \end{cases}\]

First let $S$ be given by \eqref{eq:set1} with good square generating matrices. Unfortunately, our average argument in the sense of either \eqref{eq:markov} or \eqref{eq:epsilon} prevents us from capturing a precise convergence behavior (cf. \cite[Sections~6.2 \& 6.3]{Su17} and \cite[Sections~4 \& 5]{PO22}). The following result only shows that the probabilistic worst-case error decays faster than any polynomial convergence of a fixed degree.

\begin{theorem}\label{thm:main3_net}
For a sequence $\bsu=(u_1,u_2,\ldots)$ with $u_1\geq u_2\geq \cdots> 0$, let $B=\cF_{s,\infty,\bsu}$. For $m\in \NN$, let $S$ be a set of linearly scrambled digital nets \eqref{eq:set1} with $C_1,\ldots,C_s\in \FF_b^{m\times m}$ all non-singular and satisfying \eqref{eq:tvalue_Niederreiter}. In Proposition~\ref{prop:meta}, the assumption \eqref{eq:existence} holds for any $\delta\in (0,1)$ and 
\[ \epsilon = \inf_{\lambda\in (0,1]}\left(\frac{1}{\delta b^m}\left[ -1+\prod_{j=1}^{s}\left(1+\left(\frac{b}{b-1}\right)A_{\infty,\lambda}b^{-\lambda a_j}j \log_b(j+b)\right)\right]\right)^{1/\lambda},\]
where $a_j=-\log_b(K_b m_b^{-1}u_j),$ and $A_{\infty,\lambda}$ is finite for any $\lambda\in (0,1)$ and equals
\[ A_{\infty,\lambda} :=\sum_{k=1}^{\infty}b^{-\lambda\mu_\infty(k)} = \prod_{\ell=1}^{\infty}\left[ 1+\frac{b-1}{b^{\lambda \ell}}\right]-1.\]
\end{theorem}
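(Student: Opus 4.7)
The plan is to mirror the proofs of Theorems~\ref{thm:main1_net} and \ref{thm:main2_net}, using \eqref{eq:epsilon} with the family of concave functions $\phi(x)=x^{\lambda}$, $\lambda\in(0,1]$. By Jensen's inequality \eqref{eq:jensen}, linearity of expectation, and the worst-case error bound for $\cF_{s,\infty,\bsu}$ recalled just above the theorem, we have
\[
\EE_{L_1,\ldots,L_s\in \cL_{\infty,m}}\bigl[\phi\bigl(e^{\wor}(Q_{P(L_1C_1,\ldots,L_sC_s)};\cF_{s,\infty,\bsu})\bigr)\bigr]
\leq \sum_{\bsk\in\NN_0^s\setminus\{\bszero\}} b^{-\lambda\mu_{\infty,\bsa}(\bsk)} \prob{\bsk\in P^{\perp}}.
\]
I would then group $\bsk$ by its support $u=\{j:k_j\neq 0\}$ and separate the two cases $k_j<b^m$ for all $j\in u$ and its complement, applying the two statements of Lemma~\ref{lem:prob_dual} respectively. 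The same majorization trick used in the proof of Theorem~\ref{thm:main2_net}, dominating both contributions by $(b/(b-1))^{|u|}b^{t_u}/b^m$ since $b^{t_u}\geq 1$, collapses the bound to
\[
\frac{1}{b^m}\sum_{\emptyset\neq u\subseteq 1{:}s}\left(\frac{b}{b-1}\right)^{|u|} b^{t_u}\prod_{j\in u}\sum_{k\in\NN}b^{-\lambda\mu_{\infty,a_j}(k)},
\]
where the factorization of the inner sum over $\NN^{|u|}$ as a product is legitimate because $\mu_{\infty,\bsa}$ is additive across coordinates.

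The main new computation is the one-dimensional sum. Each nonzero $b$-adic digit of $k$ at position $\ell\geq 1$ contributes $\ell+a_j$ to $\mu_{\infty,a_j}$, so an Euler-product factorization over digit positions yields
\[
\sum_{k\in\NN_0}b^{-\lambda\mu_{\infty,a_j}(k)}=\prod_{\ell=1}^{\infty}\bigl[1+(b-1)\,b^{-\lambda(\ell+a_j)}\bigr],
\]
which converges for every $\lambda>0$ and in particular forces $A_{\infty,\lambda}<\infty$. Expanding this infinite product as a sum indexed by non-empty finite subsets $S\subseteq\NN$ of ``used'' digit positions and exploiting $(b^{-\lambda a_j})^{|S|}\leq b^{-\lambda a_j}$ for $|S|\geq 1$ (which holds whenever $a_j\geq 0$) extracts a clean scalar $b^{-\lambda a_j}$ and produces the majorant
\[
\sum_{k\in\NN}b^{-\lambda\mu_{\infty,a_j}(k)}\leq b^{-\lambda a_j}A_{\infty,\lambda}.
\]
Substituting this coordinatewise, applying the Niederreiter $t$-value bound from \eqref{eq:tvalue_Niederreiter} in the form $b^{t_u}\leq\prod_{j\in u}(b\,j\log_b(j+b))$, folding the subset sum by the elementary identity $\sum_{\emptyset\neq u\subseteq 1{:}s}\prod_{j\in u}x_j=-1+\prod_{j=1}^{s}(1+x_j)$, and finally applying $\phi^{-1}(\cdot)=(\cdot)^{1/\lambda}$ in \eqref{eq:epsilon} will deliver the claimed form of $\epsilon$.

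The main obstacle I anticipate is the extraction of $b^{-\lambda a_j}$ from the Euler product, since this single step is what supplies the decay in $j$ driven by $u_j\searrow 0$ (equivalently $a_j\to\infty$) and therefore makes the $s$-factor product in $\epsilon$ meaningful in high dimensions. The $|S|\geq 1$ majorization is legitimate only when $a_j\geq 0$, i.e.\ $u_j\leq m_b/C_b$; because $u_j\searrow 0$ this is automatic for all sufficiently large $j$, but may require either a standing assumption on $\bsu$ or a coarser separate treatment of a finite initial block of indices. A secondary, purely bookkeeping, point is keeping careful track of which factor of $b$ is absorbed into which of $(b/(b-1))^{|u|}$ and $b^{t_u}$ when assembling the final product form.
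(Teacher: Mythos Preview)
Your proposal is correct and follows essentially the same route as the paper: $\phi(x)=x^\lambda$ in \eqref{eq:epsilon}, Lemma~\ref{lem:prob_dual} combined into the single majorant $(b/(b-1))^{|u|}b^{t_u}/b^m$, factorization of the $\bsk_u$-sum into one-dimensional sums, extraction of $b^{-\lambda a_j}$ (the paper does this via the equivalent inequality $\mu_{\infty,a_j}(k)\geq \mu_\infty(k)+a_j$ for $k\in\NN$, which is exactly your $|S|\geq 1$ step), identification of the remaining sum as $A_{\infty,\lambda}^{|u|}$, and finally \eqref{eq:tvalue_Niederreiter} and the subset-product identity. Your caveat that the extraction of $b^{-\lambda a_j}$ needs $a_j\geq 0$ is well observed and applies equally to the paper's own proof, which uses it tacitly.
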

\noindent Note that the constant $A_{\infty,\lambda}$ equals $\lim_{\alpha\to \infty}A_{\alpha,\lambda}$, where $A_{\alpha,\lambda}$ is as in Theorem~\ref{thm:main2_net}.

\begin{proof}
Let us consider \eqref{eq:epsilon} with the choice $\phi(x)=x^{\lambda}$ for $0<\lambda\leq 1$. Following an argument similar to the proof of Theorem~\ref{thm:main2_net}, we have
\begin{align*}
    & \EE\left[ \phi\left(e^{\wor}(Q_{P(L_1C_1,\ldots,L_sC_s)}; \cF_{s,\infty,\bsu})\right)\right] \\
    & \leq \sum_{\bsk\in \NN_0^s\setminus \{\bszero\}} \prob{\bsk \in P^\perp(L_1C_1,\ldots,L_sC_s) }b^{-\lambda \mu_{\infty,\bsa}(\bsk)}\\
    & \leq \frac{1}{b^m}\sum_{\emptyset \neq u\subseteq 1{:}s}\left(\frac{b}{b-1}\right)^{|u|}b^{t_u}\sum_{\bsk_u\in \NN^{|u|}}b^{-\lambda \mu_{\infty,\bsa}(\bsk_u)}\\
    & \leq \frac{1}{b^m}\sum_{\emptyset \neq u\subseteq 1{:}s}\left(\frac{b}{b-1}\right)^{|u|}b^{t_u}b^{-\lambda|\bsa_u|_1}\sum_{\bsk_u\in \NN^{|u|}}b^{-\lambda \mu_{\infty}(\bsk_u)}.
\end{align*}
Here, in a way similar to \cite[Proposition~3]{DGSY17}, we can prove that the inner sum over $\bsk_u$ equals $A_{\infty,\lambda}^{|u|}$. Finally, by using a bound \eqref{eq:tvalue_Niederreiter} on the $t$-value of the projected point set, we have
\begin{align*}
    & \EE\left[ \phi\left(e^{\wor}(Q_{P(L_1C_1,\ldots,L_sC_s)}; \cF_{s,\infty,\bsu})\right)\right] \\
    & \leq \frac{1}{b^m}\sum_{\emptyset \neq u\subseteq 1{:}s}\left(\frac{b}{b-1}\right)^{|u|}A_{\infty,\lambda}^{|u|}\prod_{j\in u}b^{-\lambda a_j}j \log_b(j+b)\\
    & = \frac{1}{b^m}\left[ -1+\prod_{j=1}^{s}\left(1+\left(\frac{b}{b-1}\right)A_{\infty,\lambda}b^{-\lambda a_j}j \log_b(j+b)\right)\right].
\end{align*}
Applying the inverse map $\phi^{-1}$ in \eqref{eq:epsilon} completes the proof.
\end{proof}

Recalling that $a_j=-\log_b( K_b m_b^{-1}u_j),$ the following result holds:
\begin{corollary}
If the weight sequence $\bsu$ satisfies $\sum_{j=1}^{\infty}u_j^{\lambda}\, j\log_b(j+b)<\infty$ for any $\lambda\in (0,1],$ the probabilistic worst-case error $\epsilon$ given in Theorem~\ref{thm:main3_net} is bounded independently of the dimension $s$ and decays faster than any polynomial convergence of a fixed degree.
\end{corollary}

For the set \eqref{eq:set2} it is possible to evaluate a convergence behavior more precisely under some conditions on the weights $\bsu$. To do so, as in \cite{DGSY17}, let us consider a family of concave functions of the form $\phi(x)=b^{-(-\log_b x)^{\lambda}}$ with $0<\lambda\leq 1$, which maps $b^{-x}$ to $b^{-x^{\lambda}}$. In order to make $\phi$ concave and unbounded monotonically increasing over the support $[0,\infty)$, let 
\[ \phi(x)=\begin{cases}
2^{-(-\log_2 x)^{\lambda}} & \text{if $0<x\leq \tilde{x}_{\lambda}$,} \\ \frac{\lambda (\log_2 \tilde{x}_{\lambda})^{\lambda-1}}{e\tilde{x}_{\lambda}}(x-\tilde{x}_{\lambda})+\frac{1}{e} & \text{otherwise,}
\end{cases}\]
with $\tilde{x}_{\lambda}=2^{-(\log 2)^{1/\lambda}}$ for $b=2$, and
\[ \phi(x)=\begin{cases}
b^{-(-\log_b x)^{\lambda}} & \text{if $0<x\leq 1/b$,} \\ \lambda(x-\frac{1}{b})+\frac{1}{b} & \text{otherwise,}
\end{cases}\]
for $b\geq 3$. We set $\phi(0)=0$ for any base $b\geq 2$.\footnote{In \cite{DGSY17}, there are some typos in the definition of $\phi$, which we correct in this paper.} In what follows, we focus on the case $b\geq 3$ and assume $a_1\geq 0$, i.e., $u_1\leq m_b/K_b$. Note that a similar result without the assumption $a_1\geq 0$ can be proven in the same way, including the case $b=2$ as in \cite{DGSY17,Su17}. Then, for any $\bsk\in \NN_0^s\setminus \{\bszero\}$, we have $\mu_{\infty,\bsa}(\bsk)\geq \mu_1(\bsk)\geq 1$ and $b^{-\mu_{\infty,\bsa}(\bsk)}\leq 1/b$. Now we show the following result.

\begin{theorem}\label{thm:main3_plr}
For a sequence $\bsu=(u_1,u_2,\ldots)$ with $m_b/K_b\geq u_1\geq u_2\geq \cdots> 0$, let $B=\cF_{s,\infty,\bsu}$ and $a_j=-\log_b(K_b m_b^{-1}u_j).$ For $m\in \NN$, let $S$ be a set of polynomial lattices \eqref{eq:set2} with $b\geq 3$ and let $\phi$ be given as above. In Proposition~\ref{prop:meta}, the assumption \eqref{eq:existence} holds for any $\delta\in (0,1)$ and the following value of $\epsilon$.

\begin{enumerate}
    \item (unweighted case) If there exists a constant $a\geq 0$ such that $a_1=a_2=\cdots=a$, 
    \[ \epsilon = \inf_{\substack{\lambda\in (1/2,1)\\ \tau\in (0,\min(\lambda,1/(\log b)))}}\phi^{-1}\left( \frac{3C_{s,\lambda,\tau}}{(b^m-1)\delta(\tau e\log b)^{1/\tau}}\right),\]
    where
    \[ C_{s,\lambda,\tau}=\sum_{i=1}^{\infty}\exp\left( 2\sqrt{s(b-1)(i+1)}-(i^\lambda-i^\tau)\log b \right).\]
    \item (weighted case) If there exist constants $a,q> 0$ such that $a_j\geq a(j-1)^q$, 
    \[ \epsilon = \inf_{\substack{\lambda\in ((q+1)/(2q+1),1)\\ \tau\in (0,\min(\lambda,1/(\log b)))}}\phi^{-1}\left( \frac{3C_{a,q,\lambda,\tau}}{(b^m-1)\delta(\tau e\log b)^{1/\tau}}\right),\]
    where 
    \[ C_{a,q,\lambda,\tau}=\sum_{i=1}^{\infty}\exp\left( A_{a,q} (i+1)^{(q+1)/(2q+1)}-(i^\lambda-i^\tau)\log b \right)\]
    with $A_{a,q}=1+(b-1)\left(1+\frac{\Gamma(1/q)}{qa^{1/q}}\right)$ and $\Gamma$ being the Gamma function.
\end{enumerate}
\end{theorem}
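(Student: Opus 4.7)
The plan is to apply Proposition~\ref{prop:meta} through \eqref{eq:epsilon} with the piecewise concave $\phi$ defined above. Since $a_1\geq 0$, every $\bsk\in\NN_0^s\setminus\{\bszero\}$ satisfies $\mu_{\infty,\bsa}(\bsk)\geq \mu_\infty(\bsk)\geq \mu_1(\bsk)\geq 1$, hence $b^{-\mu_{\infty,\bsa}(\bsk)}\leq 1/b$ and $\phi(b^{-\mu_{\infty,\bsa}(\bsk)})=b^{-\mu_{\infty,\bsa}(\bsk)^{\lambda}}$. Starting from the Walsh-analytic worst-case error bound $e^{\wor}(Q_{P(p,\bsg)};\cF_{s,\infty,\bsu})\leq \sum_{\bsk\in P^{\perp}(p,\bsg)\setminus\{\bszero\}}b^{-\mu_{\infty,\bsa}(\bsk)}$, applying Jensen's inequality~\eqref{eq:jensen} termwise, taking expectation over $p\in\PP_m$ and $\bsg\in\GG_m^s$, and invoking Lemma~\ref{lem:plr_walsh_average} reduces the task to bounding $\tfrac{3}{b^m-1}\sum_{\bsk\neq\bszero}\mu_1(\bsk)\,b^{-\mu_{\infty,\bsa}(\bsk)^{\lambda}}$.

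To dispose of the $\mu_1(\bsk)$ factor, I would apply the elementary inequality $x\leq b^{x^\tau}/(\tau e\log b)^{1/\tau}$ valid for all $x>0$ (obtained by minimizing $b^{x^\tau}/x$ over $x>0$, with minimum attained at $x=(1/(\tau\log b))^{1/\tau}$) to $x=\mu_1(\bsk)\leq \mu_\infty(\bsk)$, yielding $\mu_1(\bsk)\leq b^{\mu_\infty(\bsk)^{\tau}}/(\tau e\log b)^{1/\tau}$. Grouping the sum by the integer $i=\mu_\infty(\bsk)\geq 1$ and, in the unweighted case, using the monotone bound $b^{-\mu_{\infty,\bsa}(\bsk)^\lambda}\leq b^{-i^\lambda}$, reduces the estimate to $(\tau e\log b)^{-1/\tau}\sum_{i\geq 1}b^{-(i^\lambda-i^\tau)}N_i$, where $N_i$ counts (or weights) the $\bsk\in\NN_0^s$ with $\mu_\infty(\bsk)\leq i$.

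The counting $N_i$ is controlled through the generating-function identity $\sum_{\bsk}z^{\mu_\infty(\bsk)}=\prod_{l\geq 1}(1+(b-1)z^l)^s$, bounded via $\log(1+x)\leq x$ by $\exp(s(b-1)z/(1-z))$ and Chernoff-optimized at $z=e^{-y}$ with $y\asymp\sqrt{s(b-1)/(i+1)}$. This yields $N_i\leq \exp(2\sqrt{s(b-1)(i+1)})$, which upon substitution reproduces $C_{s,\lambda,\tau}$ exactly, and the condition $\lambda>1/2$ is precisely what ensures $i^\lambda$ beats $\sqrt{i}$ for convergence. For the weighted case we retain the $\bsa$-dependence in the refined generating function $\prod_j\prod_l(1+(b-1)b^{-t(l+a_j)})$ and estimate $\sum_{j\geq 1}b^{-a_jt}\leq 1+\Gamma(1/q)/(q(at\log b)^{1/q})$ by integral comparison with $\int_0^\infty e^{-at\log b\,u^q}\,du$; the corresponding Chernoff optimization balances three terms of orders $it$, $1/t$, and $t^{-1-1/q}$ in the exponent, producing the growth rate $(i+1)^{(q+1)/(2q+1)}$, the constant $A_{a,q}$, and crucially an $s$-independent bound, which is why $s$ disappears from $C_{a,q,\lambda,\tau}$.

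The final step is the Markov-type inequality built into \eqref{eq:epsilon} followed by inversion of $\phi$; the constraints $\tau<\lambda$ and $\tau<1/(\log b)$ ensure that the Markov output lands inside the concave branch $(0,1/b]$ of $\phi$ on which $\phi^{-1}$ takes the stated form. The hard part will be the weighted saddle-point calculation: carefully balancing the three competing terms in the Chernoff exponent to extract the specific exponent $(q+1)/(2q+1)$ with the explicit constant $A_{a,q}$, and verifying that the stated range $\lambda>(q+1)/(2q+1)$ is exactly what is needed for convergence of $C_{a,q,\lambda,\tau}$.
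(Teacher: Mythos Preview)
Your plan is essentially the paper's proof: Jensen with the piecewise $\phi$, then Lemma~\ref{lem:plr_walsh_average}, then the pointwise inequality $x\le b^{x^\tau}/(\tau e\log b)^{1/\tau}$, then a Chernoff bound on the level sets via the generating function $\prod_{j}\prod_{\ell\ge 1}(1+(b-1)x^{\ell+a_j})$, with the explicit choices $x=\exp(-\sqrt{s(b-1)/(i+1)})$ and $x=\exp(-(i+1)^{-q/(2q+1)})$ in the two cases. The only difference is that the paper groups by $\lfloor \mu_{\infty,\bsa}(\bsk)\rfloor$ throughout (defining $\vol_{\bsa}(i)=|\{\bsk: i\le \mu_{\infty,\bsa}(\bsk)<i+1\}|$) rather than by $\mu_\infty$, which is cleaner since $\mu_{\infty,\bsa}$ need not be integer-valued; your route works too via $\mu_\infty\le\mu_{\infty,\bsa}$. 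One small misstatement: the constraint $\tau<1/\log b$ is not there to force the Markov output into the concave branch of $\phi$ (nothing guarantees that, and the theorem's formula with $\phi^{-1}$ is valid on either branch); the paper simply records it as the range on which it asserts $x\le b^{x^\tau}/(\tau e\log b)^{1/\tau}$ for $x\ge 1$, while $\tau<\lambda$ is what ensures the series $C_{s,\lambda,\tau}$ and $C_{a,q,\lambda,\tau}$ converge.
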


\begin{proof}
Similarly to the proof of Theorem~\ref{thm:main2_plr}, by using the elementary inequality $x\leq b^{x^{\tau}}/(\tau e\log b)^{1/\tau}$ which holds for any $\tau\in (0,1/(\log b))$ and $x\geq 1$ instead of $x\leq b^{\tau x}/(\tau e\log b)$, we have
\begin{align}
    \EE\left[ \phi\left(e^{\wor}(Q_{P(p,\bsg)}; \cF_{s,\infty,\bsu})\right)\right] & \leq \sum_{\bsk\in \NN_0^s\setminus \{\bszero\}}\phi\left(b^{-\mu_{\infty,\bsa}(\bsk)}\right)\prob{\bsk\in P^{\perp}(p,\bsg)} \notag \\
    & \leq \frac{3}{b^m-1}\sum_{\bsk\in \NN_0^s\setminus \{\bszero\}}b^{-(\mu_{\infty,\bsa}(\bsk))^\lambda}\mu_1(\bsk)\notag \\
    & \leq \frac{3}{(b^m-1)(\tau e\log b)^{1/\tau}}\sum_{\bsk\in \NN_0^s\setminus \{\bszero\}}b^{-(\mu_{\infty,\bsa}(\bsk))^\lambda+(\mu_{\infty,\bsa}(\bsk))^\tau}. \label{eq:bound_infinite_smooth}
\end{align}

For $i\in \NN$, let us define
\[ \vol_{\bsa}(i):=\left| \left\{ \bsk\in \NN_0^s\setminus \{\bszero\}\mid i\leq \mu_{\infty,\bsa}(\bsk)< i+1\right\}\right|.\]
Then, for any $x\in (0,1)$, it holds that
\begin{align*}
    \vol_{\bsa}(i) & = \sum_{\substack{\bsk\in \NN_0^s\setminus \{\bszero\}\\ i\leq \mu_{\infty,\bsa}(\bsk)< i+1}}1\leq \sum_{\substack{\bsk\in \NN_0^s\setminus \{\bszero\}\\ \mu_{\infty,\bsa}(\bsk)< i+1}}1 \leq \sum_{\substack{\bsk\in \NN_0^s\setminus \{\bszero\}\\ \mu_{\infty,\bsa}(\bsk)< i+1}}x^{\mu_{\infty,\bsa}(\bsk)-(i+1)}\\
    & \leq \sum_{\bsk\in \NN_0^s}x^{\mu_{\infty,\bsa}(\bsk)-(i+1)} = \frac{1}{x^{i+1}}\prod_{j=1}^{s}\prod_{\ell=1}^{\infty}\left( 1+(b-1)x^{\ell+a_j}\right),
\end{align*}
where the last equality follows from \cite[Eq.~(4)]{Su17}. Taking the natural logarithm of the left-most and right-most sides and using the elementary inequality $\log(1+x)\leq x$ for $x>0$, we have
\begin{align*}
    \log \vol_{\bsa}(i) & \leq -(i+1)\log x+\sum_{j=1}^{s}\sum_{\ell=1}^{\infty}\log \left( 1+(b-1)x^{\ell+a_j}\right)\\
    & \leq -(i+1)\log x+(b-1)\sum_{j=1}^{s}x^{a_j}\sum_{\ell=1}^{\infty}x^{\ell}\\
    & = -(i+1)\log x+\frac{(b-1)x}{1-x}\sum_{j=1}^{s}x^{a_j} \leq -(i+1)\log x-\frac{b-1}{\log x}\sum_{j=1}^{s}x^{a_j}.
\end{align*}
In what follows, we discuss two different cases separately.
\begin{enumerate}
    \item (unweighted case) If $a_1=a_2=\cdots =a\geq 0$, by choosing 
    \[ x=1/\exp(\sqrt{s(b-1)/(i+1)}),\]
    we obtain
    \[ \vol_{\bsa}(i)\leq \exp\left( 2\sqrt{s(b-1)(i+1)}\right).\]
    For any $0<\tau<\min(\lambda,1/(\log b))$, the sum over $\bsk$ of \eqref{eq:bound_infinite_smooth} is bounded by
    \begin{align*}
    & \sum_{\bsk\in \NN_0^s\setminus \{\bszero\}}b^{-(\mu_{\infty,\bsa}(\bsk))^\lambda+(\mu_{\infty,\bsa}(\bsk))^\tau} \\
    & = \sum_{i=1}^{\infty}\sum_{\substack{\bsk\in \NN_0^s\setminus \{\bszero\}\\ i\leq \mu_{\infty,\bsa}(\bsk)< i+1}}b^{-(\mu_{\infty,\bsa}(\bsk))^\lambda+(\mu_{\infty,\bsa}(\bsk))^\tau} \leq \sum_{i=1}^{\infty}b^{-i^\lambda+i^\tau}\vol_{\bsa}(i) \\
    & \leq \sum_{i=1}^{\infty}\exp\left( 2\sqrt{s(b-1)(i+1)}-(i^\lambda-i^\tau)\log b \right) = C_{s,\lambda,\tau}.
    \end{align*}
    This infinite sum converges when $\lambda>1/2$ and $0<\tau<\min(\lambda,1/(\log b))$.
    \item (weighted case) If there exist $a,q>0$ such that $a_j\geq a(j-1)^q$ for all $j$, the sum of $x^{a_j}$ is bounded by 
    \begin{align*}
    \sum_{j=1}^{s}x^{a_j} & \leq \sum_{j=1}^{s}x^{a(j-1)^q}\leq 1+\int_0^{s-1}x^{at^q}\rd t\leq 1+\int_0^{\infty}x^{at^q}\rd t\\
    & = 1+\frac{\Gamma(1/q)}{qa^{1/q}(-\log x)^{1/q}},
    \end{align*}
    where we used the result from \cite[Lemma~6.11]{Su17} for the last equality. By choosing $x=1/\exp((i+1)^{-q/(2q+1)})$, we obtain
    \[ \vol_{\bsa}(i)\leq \exp\left(A_{a,q} (i+1)^{(q+1)/(2q+1)}\right),\]
    with 
    \[ A_{a,q}=1+(b-1)\left(1+\frac{\Gamma(1/q)}{qa^{1/q}}\right).\]
    For any $0<\tau<\min(\lambda,1/(\log b))$, the sum over $\bsk$ of \eqref{eq:bound_infinite_smooth} is bounded by
    \begin{align*}
    & \sum_{\bsk\in \NN_0^s\setminus \{\bszero\}}b^{-(\mu_{\infty,\bsa}(\bsk))^\lambda+(\mu_{\infty,\bsa}(\bsk))^\tau} \\
    & \leq \sum_{i=1}^{\infty}\exp\left( A_{a,q} (i+1)^{(q+1)/(2q+1)}-(i^\lambda-i^\tau)\log b \right)=C_{a,q,\lambda,\tau},
    \end{align*}
    which converges when $\lambda>(q+1)/(2q+1)$ and $0<\tau<\min(\lambda,1/(\log b))$.
\end{enumerate}
\noindent 
The result of the theorem follows from \eqref{eq:epsilon}.
\end{proof}

For the unweighted case, the constant $C_{s,\lambda,\tau}$ depends exponentially on $s$. For $m$ being large enough such that there exist $\lambda', \tau'$ that satisfy
\[ \frac{3C_{s,\lambda',\tau'}}{(b^m-1)\delta(\tau' e\log b)^{1/\tau'}}\leq \frac{1}{b},\]
we have
\[ \epsilon=b^{-\left(\log_b\left((b^m-1)\delta(\tau' e\log b)^{1/\tau'} \big/ (3C_{s,\lambda',\tau'})\right)\right)^{1/\lambda'}}. \]
This implies that the probabilistic worst-case error asymptotically decays faster than any polynomial convergence. However, due to the exponential dependence of the constant $C_{s,\lambda',\delta'}$ on $s$, such an asymptotic regime can be only achieved for exponentially large $b^m$ in terms of $s$. On the contrary, the following holds for the weighted case:
\begin{corollary}
If there exist $a,q>0$ such that $u_j\leq (m_b/K_b)\exp(-a(j-1)^q)$ for all $j$, the probabilistic worst-case error $\epsilon$ is independent of the dimension $s$, as given in the second item of Theorem~\ref{thm:main3_plr}, and  decays super-polynomially as
\[ \epsilon=b^{-\left(\log_b\left((b^m-1)\delta(\tau e\log b)^{1/\tau} \big/ (3C_{a,q,\lambda,\tau})\right)\right)^{1/\lambda}}, \]
for any $(q+1)/(2q+1)<\lambda\leq 1$ and $0<\tau<\min(\lambda,1/(\log b))$.
\end{corollary}

In the literature, a computable inversion formula for the worst-case error bound \eqref{eq:wafom} has been studied in the unweighted cases in \cite{MSM14,Su15}. This formula was used in \cite{Ha15} to optimize the linear scrambling of digital nets, and Theorem~\ref{thm:main3_net} provides partial theoretical support for such optimization. In the weighted cases, a dimension-independent super-polynomial convergence has been established under a slightly milder condition $\liminf_{j\to \infty}\log(u_j^{-1})/j^q>0$ for some $q>0$ in \cite{Su17} by a non-constructive existence proof and in \cite{DGSY17} for interlaced polynomial lattices with a carefully chosen interlacing factor. It should be noted that a lower bound on the worst-case error in $\cF_{s,\infty,\bsu}$ is currently unknown. Instead, a lower bound in the Walsh space, into which $\cF_{s,\infty,\bsu}$ is embedded, is shown in \cite[Section~5]{Su17}, and the super-polynomial convergence rates obtained in Theorem~\ref{thm:main3_plr} closely match that result.
\section{Numerical experiments}\label{sec:numerics}

We conclude this paper with numerical experiments. Although our theoretical results in section~\ref{sec:theory} are obtained for the infinite-precision setting $w=\infty$, we use finite-precision versions with $b=2$:
\[ \left\{ P(L_1C_1,\ldots,L_sC_s)\;\middle|\; L_1,\ldots,L_s\in \cL_{w=52,m}\right\}, \]
with the generating matrices of Sobol' points $C_1,\ldots,C_s\in \FF_2^{m\times m}$, and
\[ \left\{ P(p,\bsg,w=52)\;\middle|\; p\in \PP_m, \bsg\in \GG_m^s\right\}, \]
instead of \eqref{eq:set1} and \eqref{eq:set2} for the set $S$, respectively, throughout this section. The number $w=52$ not only seems large enough to approximate those results well but also enables us to implement reasonably in the double-precision floating-point format. Moreover, we fix the number of random draws at $r=15$, whose choice is justified by a rule of thumb given in the following remark. 

\begin{remark}
To reach a certain confidence level $1-\theta$ with small $\theta<1/4$, we use Proposition~\ref{prop:meta} to obtain $2^{r-1}\delta^{(r+1)/2}\leq \theta$. Let us set $\delta=(4\theta)^{2/r}/4$ to satisfy this inequality. Now, given a total budget of $N=r b^m$ function evaluations, consider maximizing the product $\delta b^m=N(4\theta)^{2/r}/(4r)$ with respect to $r$. Since the product $\delta b^m$ commonly appears in the denominator of $\epsilon$ in the theorems presented in the previous section, it is a natural quantity to optimize. This maximization leads to $r=-2\log(4\theta),$ which corresponds to $\delta=1/(4e).$ Using this rule of thumb, we obtain $\theta=1/(4e^{r/2})$ for a given $r$. Therefore, choosing $r=15$ ensures that the worst-case error is stabilized below a guaranteed probability of failure $\theta<1/7000.$ By changing the value of $r$, one can adjust to different confidence levels.
\end{remark}

\noindent The numerical results for a series of four different experiments are reported below.
\begin{example}
We begin by examining the distribution of worst-case errors for randomly chosen point sets, here focusing only on the set of polynomial lattices in the weighted Sobolev space of high order $\cF^{\Sob}_{s,\alpha,\bsgamma,q}$. Note that the worst-case error is bounded as shown in \eqref{eq:error_bound_high_order}. For the case of product weights, it follows from \cite[Lemma~1 \& Corollary~1]{BDLNP12} that this bound can be rewritten as
\[ \Scal_{\alpha,\bsgamma}(P)=-1+\frac{1}{|P|}\sum_{\bsx\in P}\prod_{j=1}^{s}\left[ 1+\gamma_j C_{\alpha}\omega_{\alpha}(x_{j})\right], \]
with $\omega_{\alpha}: [0,1)\to \RR$ having a concise expression if $b=2$ and $\alpha=2$ or $3$. We consider the cases $\alpha=2$ and $\alpha=3$, and set $m=8$, $s=5$, and $\gamma_j=C_{\alpha}^{-1}j^{-\alpha-1}$ for $j=1,\ldots,s$. For each value of $\alpha$, we generate $10^4$ polynomial lattices independently and randomly from the set and compute $\Scal_{\alpha,\bsgamma}$ for each polynomial lattice. The left panels of Figure~\ref{fig:worst-case} display histograms of the resulting values of $\log_2 \Scal_{\alpha,\bsgamma}$. For both values of $\alpha$, the obtained distributions exhibit a right-skewed shape, indicating that while the worst-case error is small for the majority of polynomial lattices, there is a small fraction of ``bad'' polynomial lattices with significantly larger worst-case errors.

We now evaluate how taking the median can help stabilize the worst-case error experimentally. From the proof of Proposition~\ref{prop:meta}, we can see that the worst-case error for the median-of-means with point sets $P_1, \dots, P_r$ is bounded above by the median of the worst-case errors for each individual point set. Justified by this, with $r=15$, we generate $10^4$ random sets of $r$ polynomial lattices and compute the median of the worst-case errors associated with each set. The results are presented in the right panels of Figure~\ref{fig:worst-case}, where we plot the histograms of all the realizations of 
\[ \log_2 \median\left(\Scal_{\alpha,\bsgamma}(P_1),\dots,\Scal_{\alpha,\bsgamma}(P_r)\right)=\median\left(\log_2 \Scal_{\alpha,\bsgamma}(P_1),\dots,\log_2 \Scal_{\alpha,\bsgamma}(P_r)\right). \]
We observe that the resulting distributions are more concentrated around small error values, indicating that taking the median successfully mitigates the influence of the ``bad'' polynomial lattices, exhibiting the effectiveness of the median-of-means approach in stabilizing the worst-case error.

\begin{figure}
    \centering
    \subfloat[$\alpha=2, r=1$]{\includegraphics[width=0.45\textwidth]{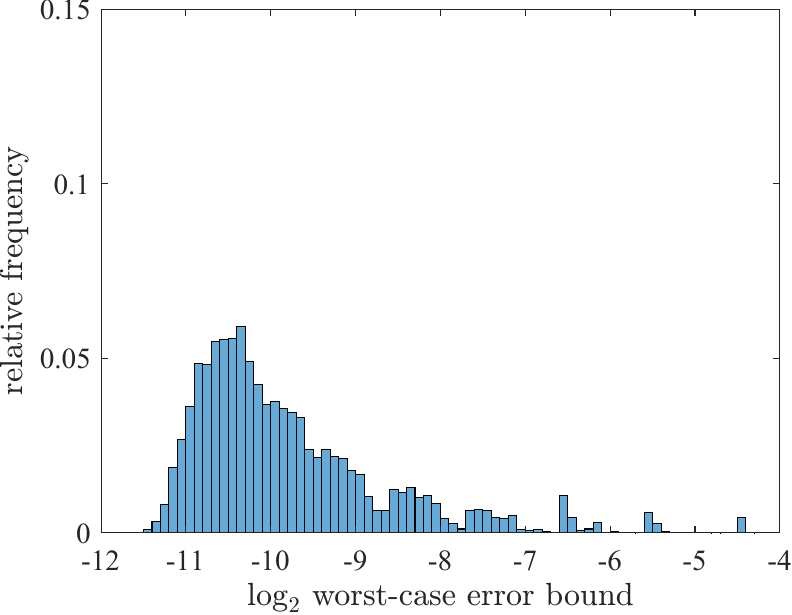}}
    \subfloat[$\alpha=2, r=15$]{\includegraphics[width=0.45\textwidth]{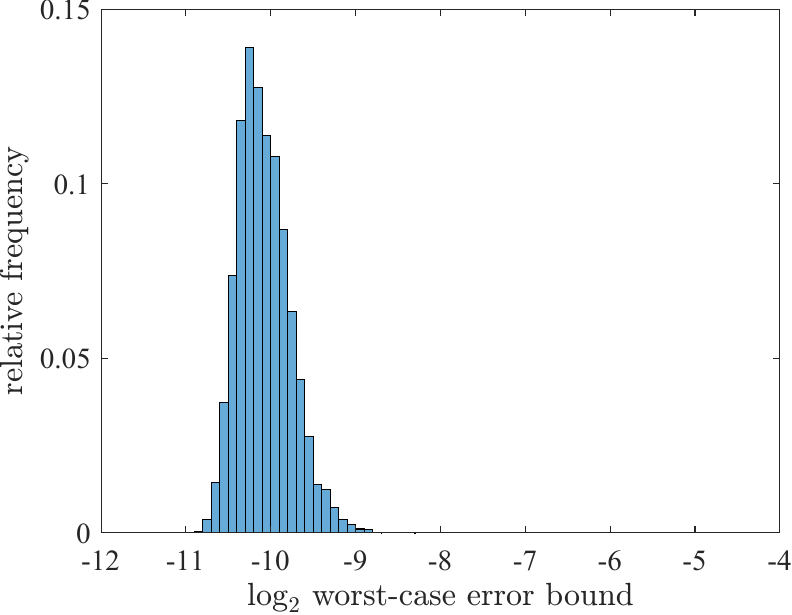}}\\
    \subfloat[$\alpha=3, r=1$]{\includegraphics[width=0.45\textwidth]{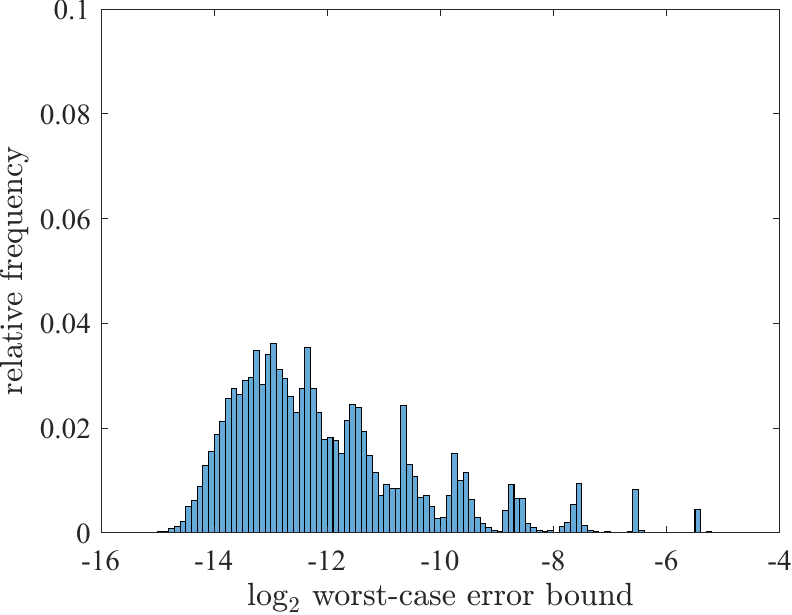}}
    \subfloat[$\alpha=3, r=15$]{\includegraphics[width=0.45\textwidth]{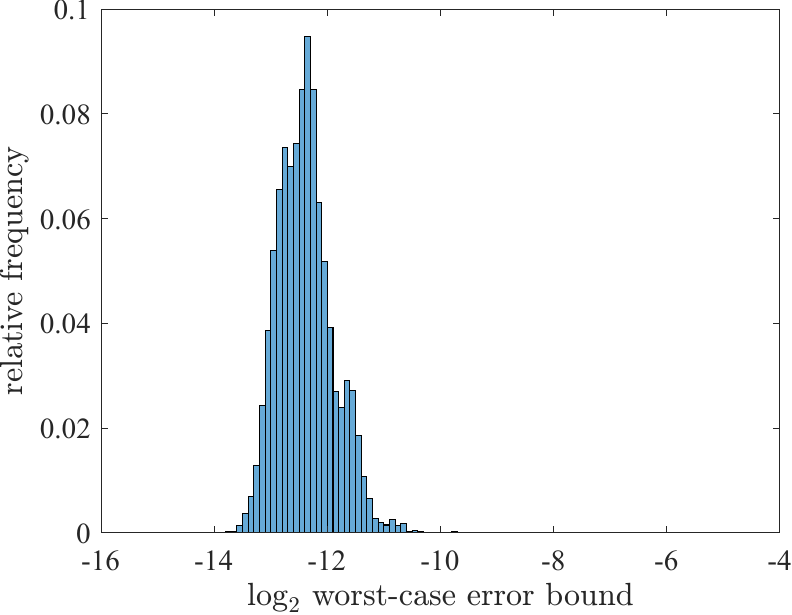}}
    \caption{Histograms of $\log_2 \Scal_{\alpha,\bsgamma}$ for randomly generated polynomial lattices (left panels) and those of the median of $\log_2 \Scal_{\alpha,\bsgamma}$ for randomly generated sets of $r=15$ polynomial lattices (right panels). The upper panels correspond to the case $\alpha=2$, while the lower panels correspond to $\alpha=3$.}
    \label{fig:worst-case}
\end{figure}
\end{example}

\begin{example}
The rest of the examples is devoted to illustrating how our median QMC integration works for some concrete test functions. We compare our novel two median QMC rules with the deterministic QMC rule using Sobol' point sets as one of the ``gold standard" methods. For this second experiment, we look at the simplest, one-dimensional integration problem. As we showed a universality for the three different function spaces in section~\ref{sec:theory}, we prepare one test function from the respective function space:
\begin{align*}
    f_1(x) & = \sqrt{x},\\
    f_2(x) & = x^2\left(\log x+\frac{1}{3}\right),\\
    f_3(x) & = x\exp(x).
\end{align*}
For any $\gamma_1>0$, it is easy to check that $f_1\in \cF_{1,1,\gamma_1}^{\Sob}$ but $f_1\notin \cF_{1,2,\gamma_1,q}^{\Sob}$ for any $1\leq q\leq \infty$. Similarly, we can check that $f_2\in \cF_{1,2,\gamma_1,q}^{\Sob}$ for any finite $q\geq 1$ but $f_2\notin \cF_{1,3,\gamma_1,1}^{\Sob}$, and also that $f_3\in \cF_{1,\infty,u_1}$ for any $u_1>0$. Note that, for the case $s=1$, the first $2^m$ points of a Sobol' sequence is just the equi-spaced points $\{i/2^m\mid 0\leq i<2^m\}$. 

The results for the three one-dimensional test functions are shown in Figure~\ref{fig:test_1d}. As can be seen from the left panel, Sobol' points cannot exploit the smoothness of integrands so that the error decays at the rate of $N^{-1}$ for all the test functions. On the contrary, our median QMC rules universally exploit the smoothness of integrands and exhibit a significantly better convergence behavior. In particular, for the function $f_3$, the rate of convergence looks accelerating until the error drops down to $2^{-52}$, which agrees with a super-polynomial convergence proven in Theorem~\ref{thm:main3_plr}. It is interesting to see that the two median QMC rules perform almost equally for all the test functions.

\begin{figure}
    \centering
    \subfloat[Unscrambled Sobol']{\includegraphics[width=0.32\textwidth]{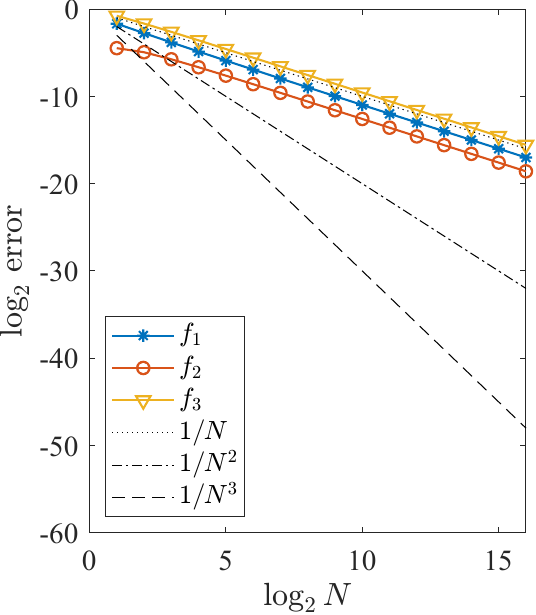}}
    \subfloat[Median scrambled Sobol']{\includegraphics[width=0.32\textwidth]{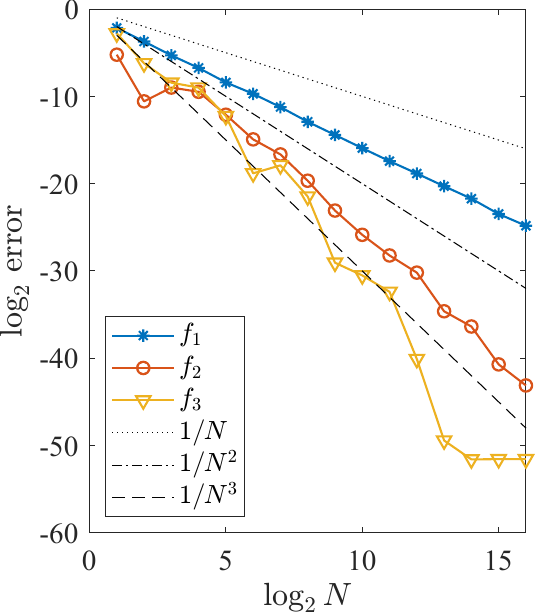}}
    \subfloat[Median polynomial lattices]{\includegraphics[width=0.32\textwidth]{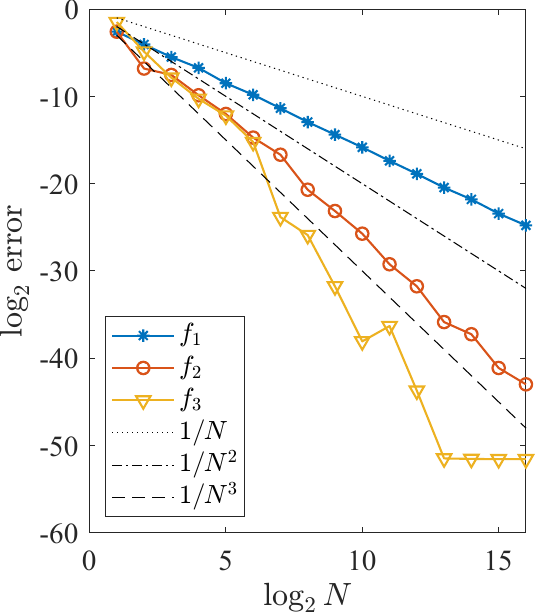}}
    \caption{Comparison of the one-dimensional integration error by QMC rule using Sobol' points (left) and two median QMC rules using linearly scrambled Sobol' points (middle) and randomly chosen polynomial lattice point sets (right) for the test functions $f_1$ (blue), $f_2$ (orange) and $f_3$ (yellow), respectively.}
    \label{fig:test_1d}
\end{figure}
\end{example}

\begin{example}
Let us move on to high-dimensional integration problems. Let $s=20$ and consider the following test function
\[ f_4(\bsx)=\prod_{j=1}^{s}\left[ 1+\frac{1}{\exp(\lceil c\rceil j)}\left(x_j^{c}-\frac{1}{1+c}\right)\right],\]
with a parameter $c>0$. When $c$ is not an integer, the $\lceil c\rceil$-th derivative of the function $x\mapsto x^{c}$ is not absolutely continuous but is in $L_q([0,1])$ for any $1\leq q<1/(\lceil c\rceil-c)$, which means that $f_4\in \cF_{s,1,\bsgamma}^{\Sob}$ if $0<c<1$ and $f_4\in \cF_{s,\lceil c\rceil,\bsgamma,q}^{\Sob}$ for any $1\leq q<1/(\lceil c\rceil-c)$ if $c>1$ is not an integer. This way, $\lceil c\rceil$ corresponds to the smoothness of this function. The factor $1/\exp(\lceil c\rceil j))$ is multiplied to model different relative importance for the individual variables.

The results for $f_4$ with $c=0.5, 1.5, 2.5$ are shown in Figure~\ref{fig:test_20d_finite}. Here again, the left panel clearly depicts that Sobol' points cannot exploit the smoothness of integrands and the error decays at the rate of $N^{-1}$ irrespective of the value of $c$. Both of the two median QMC rules compare quite favorably even for such a high-dimensional setting and both achieve an error decay nearly of order $N^{-\lceil c\rceil}$. Although taking the median has a stabilizing effect, a slight fluctuation in convergence is observed due to the fact that we randomly drew $r=15$ point sets only once for each value of $m=\log_2N$. Here again, the difference between the two median QMC rules is not significant.

\begin{figure}
    \centering
    \subfloat[Unscrambled Sobol']{\includegraphics[width=0.32\textwidth]{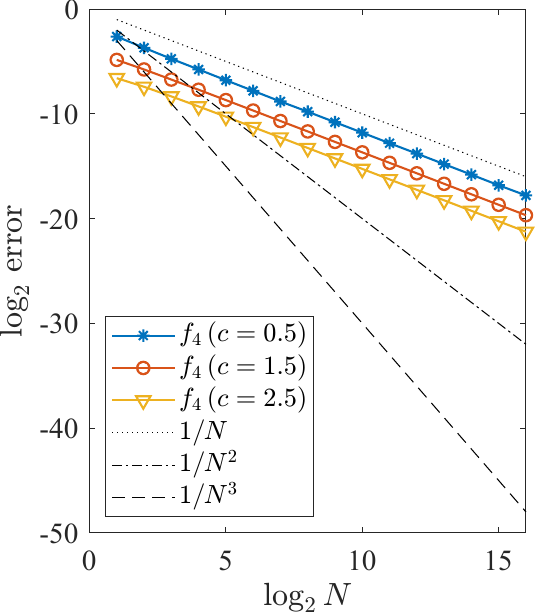}}
    \subfloat[Median scrambled Sobol']{\includegraphics[width=0.32\textwidth]{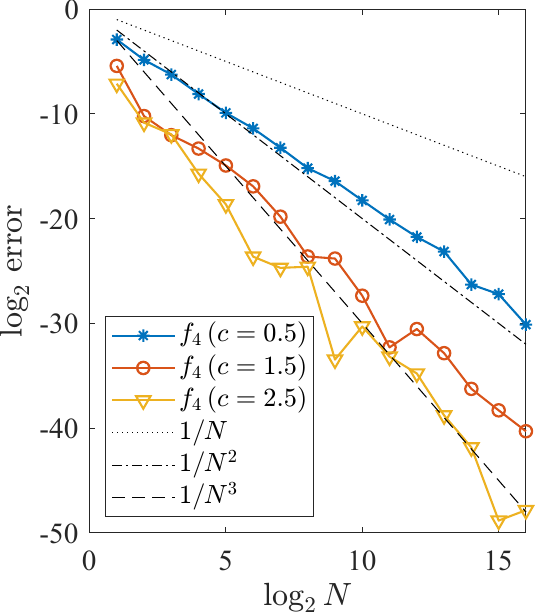}}
    \subfloat[Median polynomial lattices]{\includegraphics[width=0.32\textwidth]{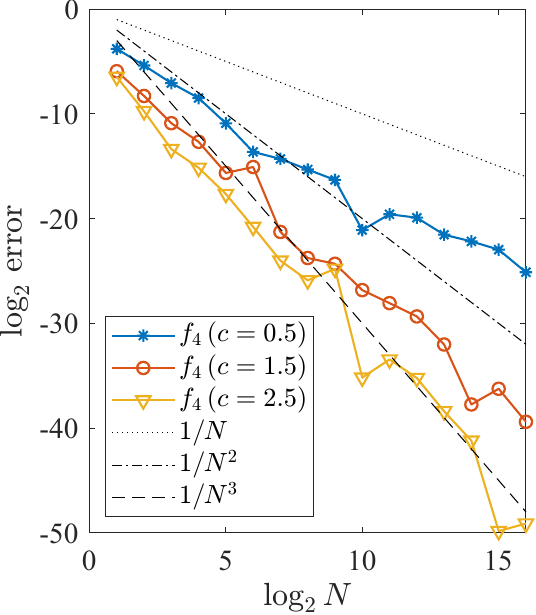}}
    \caption{Comparison of the 20-dimensional integration error by QMC rule using Sobol' points (left) and two median QMC rules using linearly scrambled Sobol' points (middle) and randomly chosen polynomial lattice point sets (right) for the finitely smooth function $f_4$ with $c=0.5$ (blue), $c=1.5$ (orange) and $c=2.5$ (yellow), respectively.}
    \label{fig:test_20d_finite}
\end{figure}
\end{example}

\begin{example}
Finally, let us consider the multivariate test function with infinite smoothness
\[ f_5(\bsx)=\exp\left( -\sum_{j=1}^{s}\frac{x_j}{2^{j^c}}\right),\]
with a parameter $c\geq 0$. It is easy to see that $f_5\in \cF_{s,\infty,\bsu}$ with $u_j=1/2^{j^c}$. Since we have $a_j=-\log_2(u_j)=j^c$, the case $c=0$ is classified into the unweighted case with $a=1$ in Theorem~\ref{thm:main3_plr}, while the case $c>0$ is into the weighted case with $a=1$ and $q=c$. In passing, it was shown in \cite{MOY18} that the worst-case error for a digital net over $\FF_2$ in $\cF_{s,\infty,\bsu}$ is bounded from above and below (up to constants) by the absolute integration error for the integrand $f_5$ with $1/2^{j^c}$ replaced by $u_j$. Therefore, the experimental results presented below on this function $f_5$ are expected to correspond to the behavior of the worst-case error.

The results for $f_5$ with $s=5$ and $c=0, 1, 2$ are shown in Figure~\ref{fig:test_5d_infinite}. Regarding each of the two median QMC rules, although it is not obvious from the figure whether the error decays super-polynomially within the performed range of $N$, the error decays at a rate between $N^{-1}$ and $N^{-2}$ even for the worst case $c=0$, which is better than that for naive QMC rule using Sobol' points. As $c$ increases, the convergence behavior for small $N$ gets improved and we can observe a higher order convergence than the case $c=0$.

\begin{figure}
    \centering
    \subfloat[Unscrambled Sobol']{\includegraphics[width=0.32\textwidth]{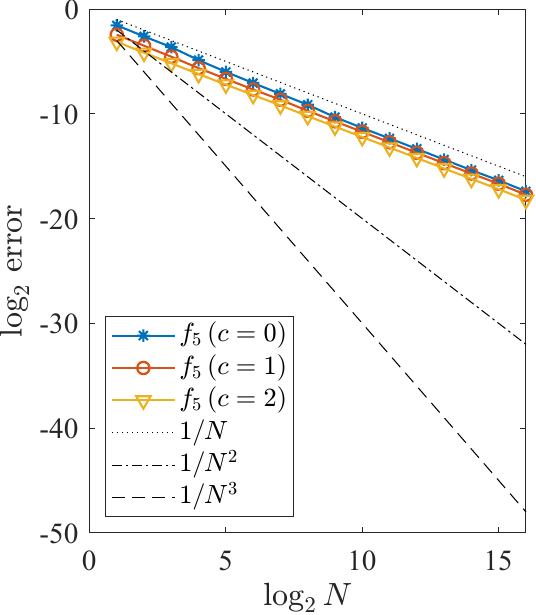}}
    \subfloat[Median scrambled Sobol']{\includegraphics[width=0.32\textwidth]{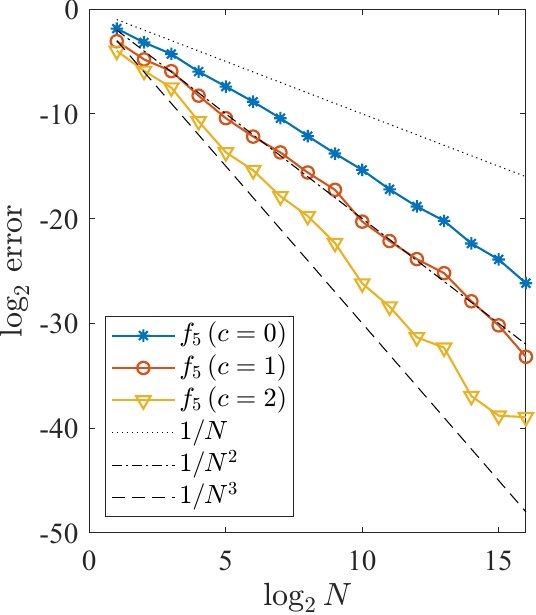}}
    \subfloat[Median polynomial lattices]{\includegraphics[width=0.32\textwidth]{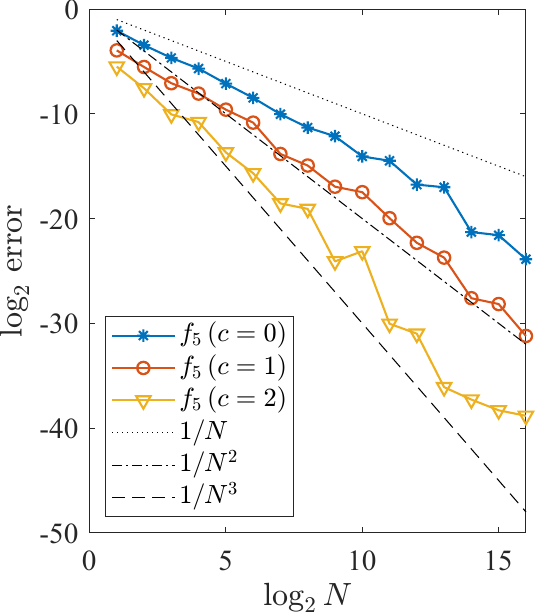}}
    \caption{Comparison of the 5-dimensional integration error by QMC rule using Sobol' points (left) and two median QMC rules using linearly scrambled Sobol' points (middle) and randomly chosen polynomial lattice point sets (right) for the infinitely smooth function $f_5$ with $c=0$ (blue), $c=1$ (orange) and $c=2$ (yellow), respectively.}
    \label{fig:test_5d_infinite}
\end{figure}
\end{example}

\section*{Acknowledgments}
The authors would like to thank the two referees for their constructive comments and suggestions. We are also grateful to Shu Tezuka for providing valuable feedback on the initial manuscript.

\bibliographystyle{siamplain}
\bibliography{ref.bib}

\begin{thebibliography}{10}

\bibitem{Ai14}
{\sc C.~Aistleitner}, {\em Tractability results for the weighted
  star-discrepancy}, Journal of Complexity, 30 (2014), pp.~381--391.

\bibitem{BDLNP12}
{\sc J.~Baldeaux, J.~Dick, G.~Leobacher, D.~Nuyens, and F.~Pillichshammer},
  {\em {Efficient calculation of the worst-case error and (fast)
  component-by-component construction of higher order polynomial lattice
  rules}}, Numerical Algorithms, 59 (2012), pp.~403--431.

\bibitem{Di08}
{\sc J.~Dick}, {\em {Walsh spaces containing smooth functions and quasi–Monte
  Carlo rules of arbitrary high order}}, SIAM Journal on Numerical Analysis, 46
  (2008), pp.~1519--1553.

\bibitem{Di09}
{\sc J.~Dick}, {\em {The decay of the Walsh coefficients of smooth functions}},
  Bulletin of the Australian Mathematical Society, 80 (2009), pp.~430--453.

\bibitem{Di12}
{\sc J.~Dick}, {\em Random weights, robust lattice rules and the geometry of
  the cbc$r$c algorithm}, Numerische Mathematik, 122 (2012), pp.~443--467.

\bibitem{DG21}
{\sc J.~Dick and T.~Goda}, {\em Stability of lattice rules and polynomial
  lattice rules constructed by the component-by-component algorithm}, Journal
  of Computational and Applied Mathematics, 382 (2021), p.~113062.

\bibitem{DGS22}
{\sc J.~Dick, T.~Goda, and K.~Suzuki}, {\em Component-by-component construction
  of randomized rank-1 lattice rules achieving almost the optimal randomized
  error rate}, Mathematics of Computation, 91 (2022), pp.~2771--2801.

\bibitem{DGSY17}
{\sc J.~Dick, T.~Goda, K.~Suzuki, and T.~Yoshiki}, {\em Construction of
  interlaced polynomial lattice rules for infinitely differentiable functions},
  Numerische Mathematik, 137 (2017), pp.~257--288.

\bibitem{DGY19}
{\sc J.~Dick, T.~Goda, and T.~Yoshiki}, {\em Richardson extrapolation of
  polynomial lattice rules}, SIAM Journal on Numerical Analysis, 57 (2019),
  pp.~44--69.

\bibitem{DHP15}
{\sc J.~Dick, A.~Hinrichs, and F.~Pillichshammer}, {\em {Proof techniques in
  quasi-Monte Carlo theory}}, Journal of Complexity, 31 (2015), pp.~327--371.

\bibitem{DKLNS14}
{\sc J.~Dick, F.~Y. Kuo, Q.~T.~L. Gia, D.~Nuyens, and C.~Schwab}, {\em {Higher
  order QMC Petrov--Galerkin discretization for affine parametric operator
  equations with random field inputs}}, SIAM Journal on Numerical Analysis, 52
  (2014), pp.~2676--2702.

\bibitem{DKS13}
{\sc J.~Dick, F.~Y. Kuo, and I.~H. Sloan}, {\em {High-dimensional integration:
  the quasi-Monte Carlo way}}, Acta Numerica, 22 (2013), pp.~133--288.

\bibitem{DLP05}
{\sc J.~Dick, G.~Leobacher, and F.~Pillichshammer}, {\em Construction
  algorithms for digital nets with low weighted star discrepancy}, SIAM Journal
  on Numerical Analysis, 43 (2005), pp.~76--95.

\bibitem{DP07}
{\sc J.~Dick and F.~Pillichshammer}, {\em Strong tractability of multivariate
  integration of arbitrary high order using digitally shifted polynomial
  lattice rules}, Journal of Complexity, 23 (2007), pp.~436--453.

\bibitem{DP10book}
{\sc J.~Dick and F.~Pillichshammer}, {\em Digital Nets and Sequences:
  Discrepancy Theory and Quasi–Monte Carlo Integration}, Cambridge University
  Press, Cambridge, 2010.

\bibitem{EKNO21}
{\sc A.~Ebert, P.~Kritzer, D.~Nuyens, and O.~Osisiogu}, {\em Digit-by-digit and
  component-by-component constructions of lattice rules for periodic functions
  with unknown smoothness}, Journal of Complexity, 66 (2021), p.~101555.

\bibitem{Fa82}
{\sc H.~Faure}, {\em Discr\'{e}pance de suites associ\'{e}es \`{a} un
  syst\`{e}me de num\'{e}ration (en dimension s)}, Acta Arithmetica, 41 (1982),
  pp.~337--351.

\bibitem{FL16}
{\sc H.~Faure and C.~Lemieux}, {\em {Irreducible Sobol' sequences in prime
  power bases}}, Acta Arithmetica, 173 (2016), pp.~59--80.

\bibitem{Fr76}
{\sc K.~K. Frolov}, {\em Upper bounds for the errors of quadrature formulae on
  classes of functions}, Doklady Akademii Nauk SSSR, 231 (1976), pp.~818--821.

\bibitem{GLM22}
{\sc E.~Gobet, M.~Lerasle, and D.~M\'{e}tivier}, {\em {Mean estimation for
  Randomized Quasi Monte Carlo method}}, HAL preprint {\tt hal-03631879},
  (2022).

\bibitem{G15}
{\sc T.~Goda}, {\em {Good interlaced polynomial lattice rules for numerical
  integration in weighted Walsh spaces}}, Journal of Computational and Applied
  Mathematics, 285 (2015), pp.~279--294.

\bibitem{G16}
{\sc T.~Goda}, {\em {Quasi-Monte Carlo integration using digital nets with
  antithetics}}, Journal of Computational and Applied Mathematics, 304 (2016),
  pp.~26--42.

\bibitem{GL22}
{\sc T.~Goda and P.~L'Ecuyer}, {\em {Construction-free median quasi-Monte Carlo
  rules for function spaces with unspecified smoothness and general weights}},
  SIAM Journal on Scientific Computing, 44 (2022), pp.~A2765--A2788.

\bibitem{GS22}
{\sc T.~Goda and K.~Suzuki}, {\em Improved bounds on the gain coefficients for
  digital nets in prime power base}, Journal of Complexiy, 76 (2023),
  p.~101722.

\bibitem{GSY16}
{\sc T.~Goda, K.~Suzuki, and T.~Yoshiki}, {\em {Digital nets with infinite
  digit expansions and construction of folded digital nets for quasi-Monte
  Carlo integration}}, Journal of Complexity, 33 (2016), pp.~30--54.

\bibitem{Ha60}
{\sc J.~H. Halton}, {\em On the efficiency of certain quasi-random sequences of
  points in evaluating multi-dimensional integrals}, Numerische Mathematik, 2
  (1960), pp.~84--90.

\bibitem{Ha15}
{\sc S.~Harase}, {\em {Quasi-Monte Carlo point sets with small t-values and
  WAFOM}}, Applied Mathematics and Computation, 254 (2015), pp.~318--326.

\bibitem{Hic96}
{\sc F.~J. Hickernell}, {\em {Quadrature error bounds with applications to
  lattice rules}}, SIAM Journal on Numerical Analysis, 33 (1996),
  pp.~1995--2016.

\bibitem{Hic98}
{\sc F.~J. Hickernell}, {\em {A generalized discrepancy and quadrature error
  bound}}, Mathematics of Computation, 67 (1998), pp.~299--322.

\bibitem{HN03}
{\sc F.~J. Hickernell and H.~Niederreiter}, {\em The existence of
  goodextensible rank-1 lattices}, Journal of Complexity, 19 (2003),
  pp.~286--300.

\bibitem{HR22}
{\sc J.~Hofstadler and D.~Rudolf}, {\em Consistency of randomized integration
  methods}, Journal of Complexity, 76 (2023), p.~101740.

\bibitem{KN17}
{\sc D.~Krieg and E.~Novak}, {\em A universal algorithm for multivariate
  integration}, Foundations of Computational Mathematics, 17 (2017),
  pp.~895--916.

\bibitem{KR19}
{\sc R.~J. Kunsch and D.~Rudolf}, {\em {Optimal confidence for Monte Carlo
  integration of smooth functions}}, Advances in Computational Mathematics, 45
  (2019), pp.~3095--3122.

\bibitem{Ku03}
{\sc F.~Y. Kuo}, {\em {Component-by-component constructions achieve the optimal
  rate of convergence for multivariate integration in weighted Korobov and
  Sobolev spaces}}, Journal of Complexity, 19 (2003), pp.~301--320.

\bibitem{KN16}
{\sc F.~Y. Kuo and D.~Nuyens}, {\em {Application of quasi-Monte Carlo methods
  to elliptic PDEs with random diffusion coefficients: a survey of analysis and
  implementation}}, Foundations of Computational Mathematics, 16 (2016),
  pp.~1631--1696.

\bibitem{KSS12}
{\sc F.~Y. Kuo, C.~Schwab, and I.~H. Sloan}, {\em {Quasi-Monte Carlo finite
  element methods for a class of elliptic partial differential equations with
  random coefficients}}, SIAM Journal on Numerical Analysis, 50 (2012),
  pp.~3351--3374.

\bibitem{LL03}
{\sc C.~Lemieux and P.~L'Ecuyer}, {\em Randomized polynomial lattice rules for
  multivariate integration and simulation}, SIAM Journal on Scientific
  Computing, 24 (2003), pp.~1768--1789.

\bibitem{LL99}
{\sc P.~L’Ecuyer and C.~Lemieux}, {\em {Quasi-Monte Carlo via linear
  shift-register sequences}}, in Proceedings of the 1999 Winter Simulation
  Conference, P.~A. Farrington, H.~B. Nembhard, D.~T. Sturrock, and G.~W.
  Evans, eds., 1999, pp.~632--639.

\bibitem{Ma98}
{\sc J.~Matou\v{s}ek}, {\em On the ${L}_2$-discrepancy for anchored boxes},
  Journal of Complexity, 14 (1998), pp.~527--556.

\bibitem{MOY18}
{\sc M.~Matsumoto, R.~Ohori, and T.~Yoshiki}, {\em {Approximation of
  Quasi-Monte Carlo worst case error in weighted spaces of infinitely times
  smooth functions}}, Journal of Computational and Applied Mathematics, 330
  (2018), pp.~155--164.

\bibitem{MSM14}
{\sc M.~Matsumoto, M.~Saito, and K.~Matoba}, {\em {A computable figure of merit
  for quasi-Monte Carlo point sets}}, Mathematics of Computation, 83 (2014),
  pp.~1233--1250.

\bibitem{Me05}
{\sc M.~Merkle}, {\em Jensen's inequality for medians}, Statistics \&
  Probability Letters, 71 (2005), pp.~277--281.

\bibitem{Ni88}
{\sc H.~Niederreiter}, {\em Low-discrepancy and low-dispersion sequences},
  Journal of Number Theory, 30 (1988), pp.~51--70.

\bibitem{Ni92}
{\sc H.~Niederreiter}, {\em Low-discrepancy point sets obtained by digital
  constructions over finite fields}, Czechoslovak Mathematical Journal, 42
  (1992), pp.~143--166.

\bibitem{Ni92book}
{\sc H.~Niederreiter}, {\em Random Number Generation and Quasi-Monte Carlo
  Methods}, Society for Industrial and Applied Mathematics, Philadelphia, 1992.

\bibitem{NX01bok}
{\sc H.~Niederreiter and C.~Xing}, {\em Rational Points on Curves over Finite
  Fields: Theory and Applications}, Cambridge University Press, Cambridge,
  2001.

\bibitem{NP09}
{\sc W.~Niemiro and P.~Pokarowski}, {\em {Fixed precision MCMC estimation by
  median of products of averages}}, Journal of Applied Probability, 46 (2009),
  pp.~309--329.

\bibitem{Nov96}
{\sc E.~Novak}, {\em {On the power of adaption}}, Journal of Complexity, 12
  (1996), pp.~199--237.

\bibitem{NW08book}
{\sc E.~Novak and H.~Wo\'{z}niakowski}, {\em Tractability of Multivariate
  Problems, Volume I: Linear Information}, EMS Press, Z\"{u}rich, 2008.

\bibitem{NC06}
{\sc D.~Nuyens and R.~Cools}, {\em {Fast algorithms for component-by-component
  construction of rank-1 lattice rules in shift-invariant reproducing kernel
  Hilbert spaces}}, Mathematics of Computation, 75 (2006), pp.~903--920.

\bibitem{Ow95}
{\sc A.~B. Owen}, {\em Randomly permuted $(t,m,s)$-nets and $(t,s)$-sequences},
  in Monte Carlo and quasi-Monte Carlo methods in scientific computing,
  H.~Niederreiter and P.~J.-S. Shiue, eds., New York, 1995, Springer,
  pp.~299--317.

\bibitem{Ow97a}
{\sc A.~B. Owen}, {\em {Monte Carlo variance of scrambled net quadrature}},
  SIAM Journal on Numerical Analysis, 34 (1997), pp.~1884--1910.

\bibitem{Ow97b}
{\sc A.~B. Owen}, {\em Scrambled net variance for integrals of smooth
  functions}, The Annals of Statistics, 25 (1997), pp.~1541--1562.

\bibitem{Ow03}
{\sc A.~B. Owen}, {\em {Variance with alternative scramblings of digital
  nets}}, ACM Transactions on Modeling and Computer Simulation, 13 (2003),
  pp.~363--378.

\bibitem{PO22}
{\sc Z.~Pan and A.~B. Owen}, {\em Super-polynomial accuracy of multidimensional
  randomized nets using the median-of-means}, arXiv preprint {\tt
  arXiv:2208.05078},  (2022).

\bibitem{PO21}
{\sc Z.~Pan and A.~B. Owen}, {\em Super-polynomial accuracy of one dimensional
  randomized nets using the median-of-means}, Mathematics of Computation, 92
  (2023), pp.~805--837.

\bibitem{Pi12}
{\sc F.~Pillichshammer}, {\em Polynomial lattice point sets}, in Monte Carlo
  and Quasi-Monte Carlo Methods 2010, L.~Plaskota and H.~Wo\'{z}niakowski,
  eds., Berlin, 2012, Springer-Verlag, pp.~189--210.

\bibitem{Po13}
{\sc P.~Pollack}, {\em Irreducible polynomials with several prescribed
  coefficients}, Finite Fields and Their Applications, 22 (2013), pp.~70--78.

\bibitem{Sh63}
{\sc I.~F. Sharygin}, {\em A lower estimate for the error of quadrature
  formulae for certain classes of functions}, Zhurnal Vychislitel'noi
  Matematiki i Matematicheskoi Fiziki, 3 (1963), pp.~370--376.

\bibitem{Sk06}
{\sc M.~M. Skriganov}, {\em Harmonic analysis on totally disconnected groups
  and irregularities of point distributions}, Journal f\"{u}r die reine und
  angewandte Mathematik, 600 (2006), pp.~25--49.

\bibitem{SW98}
{\sc I.~H. Sloan and H.~Wo\'{z}niakowski}, {\em {When are quasi-Monte Carlo
  algorithms efficient for high dimensional integrals?}}, Journal of
  Complexity, 14 (1998), pp.~1--33.

\bibitem{So67}
{\sc I.~M. Sobol'}, {\em On the distribution of points in a cube and the
  approximate evaluation of integrals}, Zhurnal Vychislitel'noi Matematiki i
  Matematicheskoi Fiziki, 7 (1967), pp.~784--802.

\bibitem{Su15}
{\sc K.~Suzuki}, {\em {WAFOM over abelian groups for quasi-Monte Carlo point
  sets}}, Hiroshima Mathematical Journal, 45 (2015), pp.~341--364.

\bibitem{Su17}
{\sc K.~Suzuki}, {\em Super-polynomial convergence and tractability of
  multivariate integration for infinitely times differentiable functions},
  Journal of Complexity, 39 (2017), pp.~51--68.

\bibitem{SY16}
{\sc K.~Suzuki and T.~Yoshiki}, {\em {Formulas for the Walsh coefficients of
  smooth functions and their application to bounds on the Walsh coefficients}},
  Journal of Approximation Theory, 205 (2016), pp.~1--24.

\bibitem{Te93}
{\sc S.~Tezuka}, {\em {Polynomial arithmetic analogue of Halton sequences}},
  ACM Transactions on Modeling and Computer Simulation, 3 (1993), pp.~99--107.

\bibitem{Te94}
{\sc S.~Tezuka}, {\em {A generalization of Faure sequences and its efficient
  implementation}}, Research Report IBM, RT0105 (1994), pp.~1--10.

\bibitem{TF03}
{\sc S.~Tezuka and H.~Faure}, {\em {$I$-binomial scrambling of digital nets and
  sequences}}, Journal of Complexity, 19 (2003), pp.~744--757.

\bibitem{U17}
{\sc M.~Ullrich}, {\em {A Monte Carlo method for integration of multivariate
  smooth functions}}, SIAM Journal on Numerical Analysis, 55 (2017),
  pp.~1188--1200.

\bibitem{UU16}
{\sc M.~Ullrich and T.~Ullrich}, {\em {The role of Frolov's cubature formula
  for functions with bounded mixed derivative}}, SIAM Journal on Numerical
  Analysis, 54 (2016), pp.~969--993.

\bibitem{Wa03}
{\sc X.~Wang}, {\em {Strong tractability of multivariate integration using
  quasi-Monte Carlo algorithms}}, Mathematics of Computation, 72 (2003),
  pp.~823--838.

\bibitem{We16}
{\sc H.~Weyl}, {\em {\"{U}ber die Gleichverteilung von Zahlen mod.\ Eins}},
  Mathematische Annalen, 77 (1916), pp.~313--352.

\bibitem{Yo17}
{\sc T.~Yoshiki}, {\em {Bounds on Walsh coefficients by dyadic difference and a
  new Koksma-Hlawka type inequality for Quasi-Monte Carlo integration}},
  Hiroshima Mathematical Journal, 47 (2017), pp.~155--179.

\end{thebibliography}
\end{document}